\date{}
\newtheorem{theorem}{Theorem}[section]
\newtheorem{proposition}[theorem]{Proposition}
\newtheorem{claim}[theorem]{Claim}
\newtheorem{lemma}[theorem]{Lemma}
\newtheorem{fact}[theorem]{Fact}
\newtheorem{corollary}[theorem]{Corollary}
\theoremstyle{definition}
\newtheorem{example}[theorem]{Example}
\newtheorem{remark}[theorem]{Remark}
\newcommand{\N}{\mathbb N}
\newcommand{\Q}{\mathbb Q}
\newcommand{\Z}{\mathbb Z}
\newcommand{\Prm}{\mathbb P}
\newcommand{\T}{\mathbb{T}}
\def\nub{\mathrm{nub}}
\def\J{\mathbb{J}}
\newcommand{\NB}{}
\def\Ddots{\mathinner{\mkern1mu\raise\p@
\vbox{\kern7\p@\hbox{.}}\mkern2mu
\raise4\p@\hbox{.}\mkern2mu\raise7\p@\hbox{.}\mkern1mu}}
\numberwithin{equation}{section}
\title{Scale function vs Topological entropy}
\author{Federico Berlai\footnote{The first named author was partially supported by the European Research Council (ERC) grant of Prof. Goulnara Arzhantseva, grant agreement no. 259527.} \and Dikran Dikranjan$^\dag$ \and Anna Giordano Bruno\footnote{The second and the third named authors were partially supported by ``Progetti di Eccellenza 2011/12'' of Fondazione CARIPARO.}}
\begin{document}

\maketitle

\begin{abstract}
In the realm of topological automorphisms of totally disconnected locally compact groups, the scale function introduced by Willis in \cite{Willis} is compared with the topological entropy. We prove that the logarithm of the scale function is always dominated by the topological entropy and we provide examples showing that this inequality can be strict.  Moreover, we give a condition equivalent to the equality between these two invariants.
Various properties of the scale function, inspired by those of the topological entropy, are presented.
\end{abstract}

\bigskip
\noindent 
\textrm{\small{Key words: scale function, topological entropy, totally disconnected locally compact group, automorphism\\ 2010 AMS Subject Classification: 37B40, 22D05, 22D40, 54H11, 54H20, 54C70.}}

\section{Introduction}

The scale function for  inner automorphisms of totally disconnected locally compact groups was introduced by Willis \cite{Willis} and developed in his later works; among them we mention \cite{Willis5,Willis2,Willis3}, where the scale function was defined for topological automorphisms of such groups as well. A wealth of results concerning the explicit computation of the scale function in $p$-adic Lie groups and in linear groups over local fields were obtained by Gl\" ockner  \cite{Gl1,Gl2,GW}.

On the other hand, Adler, Konheim and McAndrew introduced in \cite{AKM} the topological entropy for continuous selfmaps of compact spaces, while later on Bowen in \cite{B} gave a different definition of topological entropy for uniformly continuous selfmaps of metric spaces, and this was extended to uniformly continuous selfmaps of uniform spaces by Hood in \cite{hood}. As explained in detail in \cite{DG-islam}, this definition can be significantly simplified in the case of continuous endomorphisms of totally disconnected locally compact groups.

For a topological automorphism of a totally disconnected locally compact group, the scale function and the topological entropy seem to be strongly related. A question in this direction was posed by Thomas Weigel, who asked for a possible relation of the scale function with either the topological entropy or the algebraic entropy. Even if they do not coincide in general, we see in this paper that the values of the scale function and of the topological entropy can be obtained in a similar way, and this permits to find the precise relation between them.
Further aspects of the connection of the scale function to the topological and the algebraic entropy are discussed in the forthcoming paper \cite{BDGT}. 

\medskip
So in this paper we are mainly concerned with a totally disconnected locally compact group $G$ and a topological automorphism $\phi:G\to G$; when not explicitly said, we are assuming to be under these hypotheses.
It is worth recalling immediately that a totally disconnected locally compact group $G$ has as a local base at $e_G$ the family $\mathcal B(G)$ of all open compact subgroups of $G$, as proved by van Dantzig in \cite{vD}.

\bigskip
\bigskip
We start now giving the precise definition of scale function as it was introduced in \cite{Willis3}.
For $G$ a totally disconnected locally compact group and $\phi:G\to G$ a topological automorphism, the \emph{scale} of $\phi$ is 
\begin{equation}\label{(S)}
s_G(\phi) = \min\{s_G(\phi,U): U \in \mathcal B(G)\},
\end{equation}
where $$s_G(\phi,U)=[\phi(U):U\cap\phi(U)].$$ 
Note that for every $U\in\mathcal B(G)$ the index $s_G(\phi,U)$ is finite as $U\cap \phi(U)$ is open and $\phi(U)$ is compact, so the value $s_G(\phi)$ of the scale function of $\phi$ is always a natural number. We use the notations $s(\phi,U)$ and $s(\phi)$ when the group $G$ is clear from the context.

It is worth to observe immediately that the scale function of any topological automorphism $\phi$ of $G$ is trivial (i.e., $s(\phi)=1$), whenever the  group $G$ is either compact or discrete.

\medskip
Since the scale function is defined as a minimum, there exists $U \in \mathcal B(G)$ for which this minimum realizes, that is $s(\phi) =s(\phi,U)$, and such $U$ is called \emph{minimizing} for $\phi$ in \cite{Willis2}. Let $\mathcal M(G,\phi)$ be the subfamily of $\mathcal B(G)$ consisting of all compact open subgroups of $G$ that are minimizing for $\phi$, that is
$$\mathcal M(G,\phi)=\{U\in\mathcal B(G):U\ \text{minimizing for}\ \phi\}.$$
Moreover, we say that a subgroup $U$ of $G$ is $\phi$-\emph{invariant} if $\phi(U)\subseteq U$, \emph{inversely} $\phi$-\emph{invariant} if $U\subseteq \phi(U)$ (i.e., $\phi^{-1}(U)\subseteq U$), and $\phi$-\emph{stable} if $\phi(U)=U$ (i.e., $U$ is both $\phi$-invariant and inversely $\phi$-invariant).

\smallskip
It is easy to see that $\mathcal M(G,\phi)$ contains all $\phi$-invariant or inversely $\phi$-invariant $U\in\mathcal B(G)$; in particular, 
$s(\phi)=1$ precisely when there exists a $\phi$-invariant $U\in\mathcal B(G)$, that is, $\mathcal M(G,\phi)=\{U\in\mathcal B(G): U\ \text{$\phi$-invariant}\}$ when $s(\phi)=1$ (see Lemma \ref{invariantsubgroup}). 

\smallskip
On the other hand, if one has to use only the definition of scale function, the subgroups minimizing for $\phi$ that are not $\phi$-invariant or inversely $\phi$-invariant become quite hard to come by, since in \eqref{(S)} one has to check all subgroups from the large filter base $\mathcal B(G)$. So, in order to characterize and find minimizing subgroups, a different approach is adopted by Willis and we describe it in what follows. For $U\in \mathcal B(G)$ let 
\begin{equation}\label{upiu}
U_{\phi+}= \bigcap_{n\in\N} \phi^n(U)\ \quad \text{and}\ \quad U_{\phi-}= \bigcap_{n\in\N} \phi^{-n}(U);
\end{equation}
and also
\begin{equation}\label{upiupiu}
U_{\phi++}=\bigcup_{n\in\N} \phi^n(U_{\phi+})\ \quad\text{and}\ \quad U_{\phi--}=\bigcup_{n\in\N} \phi^{-n}(U_{\phi-}).
\end{equation}
Note that $U_{\phi-}=U_{\phi^{-1}+}$ is $\phi$-invariant and $U_{\phi+}=U_{\phi^{-1}-}$
 is inversely $\phi$-invariant (for further properties of these subgroups see Lemma \ref{invariantsubgroup} and the diagram \eqref{carpet}).
When the automorphism $\phi$ is clear from the context it is omitted from these notations. 

The subgroup $U$ is said to be:
\begin{itemize}
\item[(a)] \emph{tidy above} for $\phi$ if $U = U_+U_-$ (or equivalently, $U=U_-U_+$); 
\item[(b)] \emph{tidy below} for $\phi$  if $U_{++}$ is closed;
\item[(c)] \emph{tidy} for $\phi$ if it is tidy above and tidy below for $\phi$.
\end{itemize}

The consequence of the so-called ``tidying procedure" given in \cite{Willis2} is the following fundamental theorem showing that the minimizing subgroups are precisely the tidy subgroups, namely $$\mathcal M (G,\phi)=\{U\in\mathcal B(G):U\ \text{tidy for}\ \phi\}.$$

\begin{theorem}\label{TP}\emph{\cite[Theorem 3.1]{Willis2}}
Let $G$ be a totally disconnected locally compact group, $\phi:G\to G$ a topological automorphism and $U\in\mathcal B(G)$. Then $U$ is minimizing for $\phi$ if and only if $U$ is tidy for $\phi$. In this case $$s(\phi)=[\phi(U_+):U_+].$$
\end{theorem}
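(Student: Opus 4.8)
The plan is to follow Willis's \emph{tidying procedure}, deducing both implications, together with the formula $s(\phi)=[\phi(U_+):U_+]$, from three ingredients.

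\textbf{(1) The scale of a tidy subgroup.} I would first show that if $V\in\mathcal B(G)$ is tidy for $\phi$, then $s(\phi,V)=[\phi(V_+):V_+]$. This is a direct index computation: from $V=V_+V_-$ together with $\phi(V_-)\subseteq V_-$ and $V_+\subseteq\phi(V_+)$ one gets $\phi(V)=\phi(V_+)\phi(V_-)$, while the tidy-below hypothesis is used precisely to guarantee $\phi(V_+)\cap V=V_+$, whence $V\cap\phi(V)=V_+\phi(V_-)$ and $s(\phi,V)=[\phi(V):V\cap\phi(V)]=[\phi(V_+):V_+]$ after cancelling the common factor $\phi(V_-)$. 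With the structural properties of $V_{\pm}$ and $V_{\pm\pm}$ (Lemma \ref{invariantsubgroup}) in hand, this step should be routine.

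\textbf{(2) The tidying procedure.} The core is: for every $U\in\mathcal B(G)$ there is a tidy $V\in\mathcal B(G)$ with $s(\phi,V)\le s(\phi,U)$, the inequality being \emph{strict} unless $U$ is already tidy. I would carry this out in the two classical sub-steps. For \emph{tidy above}, one replaces $U$ by $\bigcap_{k=-n}^{n}\phi^k(U)$ for $n$ large enough; the non-increase of the index follows by iterating the elementary inequality $[A\cap C:B\cap C]\le[A:B]$ (valid whenever $B\subseteq A$, via the injection of coset spaces $A\cap C/(B\cap C)\hookrightarrow A/B$), and the existence of a suitable $n$ is a stabilisation argument for the relevant descending chains of compact open subgroups. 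For \emph{tidy below}, from a subgroup $U'$ that is tidy above one passes to a genuinely tidy $V$ by enlarging $U'$ with the relevant parts of the closures $\overline{U'_{++}}$ and $\overline{U'_{--}}$, and one checks that the index is preserved. I expect this to be the main obstacle: the bookkeeping that makes the index strictly drop whenever $U$ fails to be tidy above (resp.\ below), together with the termination of the procedure, is where essentially all of the difficulty of the theorem lies.

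\textbf{(3) The common value, and assembly.} Finally I would show that $[\phi(V_+):V_+]$ is the same for all tidy $V$ and equals $s(\phi)$. The tool is submultiplicativity, $s(\phi^n,U)\le s(\phi,U)^n$ — again deduced from $[A\cap C:B\cap C]\le[A:B]$ through a telescoping tower over the subgroups $U\cap\phi(U)\cap\dots\cap\phi^n(U)$ — so that $\sigma(\phi,U):=\lim_n s(\phi^n,U)^{1/n}$ exists by Fekete's lemma and is $\le s(\phi,U)$; one then checks that $\sigma(\phi,U)$ is independent of $U$ (using that an open subgroup of a compact group has finite index), and that if $V$ is tidy for $\phi$ then it is tidy for each $\phi^n$, so that $s(\phi^n,V)=s(\phi,V)^n$ and hence $\sigma(\phi,V)=s(\phi,V)$. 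Consequently $s(\phi,U)\ge\sigma(\phi,U)=\sigma(\phi,V)=s(\phi,V)$ for every $U$ and every tidy $V$, so every tidy subgroup is minimizing and, by (1), $s(\phi)=[\phi(V_+):V_+]$. Conversely, if $U$ is minimizing then by (2) it cannot be strictly improved, so it is already tidy — which completes the equivalence.
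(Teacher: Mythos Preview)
The paper does not actually prove this theorem: it is quoted verbatim from \cite[Theorem 3.1]{Willis2} and used throughout as a black box, so there is no ``paper's own proof'' to compare against. Your outline is essentially Willis's original tidying-procedure argument (with a touch of M\"oller's asymptotic formula from \cite{M} in step (3)), which is the right thing to sketch here.

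One small correction to step (1): the identity $\phi(V_+)\cap V=V_+$ holds for \emph{every} $V\in\mathcal B(G)$, with no tidiness hypothesis at all, simply because $V_+=\bigcap_{n\ge 0}\phi^n(V)$ and hence $\phi(V_+)=\bigcap_{n\ge 1}\phi^n(V)$. Accordingly, the equality $s(\phi,V)=[\phi(V_+):V_+]$ needs only \emph{tidy above}, not tidy below --- this is exactly what the paper records as Corollary~\ref{s=min} (derived from Lemma~\ref{taeq}). The tidy-below hypothesis enters only later, in step (2), to guarantee that the tidying procedure terminates with a subgroup that is genuinely minimizing; it plays no role in the index computation of step (1). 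Apart from this misattribution your plan is sound, though as you rightly anticipate, the real work --- and the part you have left as a sketch --- is the strict-decrease bookkeeping in the tidying procedure.
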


Note that the index $[\phi(U_+):U_+]$ is finite as $U_+ = U \cap \phi(U_+)$ and $U$ is open, so $U_+$ is open in $\phi(U_+)$, while $\phi(U_+)$ is compact. 

\bigskip
We pass now to recall the definition of topological entropy in this setting, 
following \cite{DG-islam}. Let $G$ be a totally disconnected locally compact group, $\phi:G\to G$ a continuous endomorphism and $U\in\mathcal B(G)$. For an integer $n\geq 0$ let 
\begin{equation}\label{uenne}
U_n=\bigcap_{k=0}^n\phi^k(U)\quad\text{and}\quad U_{-n}=\bigcap_{k=-n}^0\phi^k(U).
\end{equation}
The \emph{topological entropy of $\phi$ with respect to $U$} is given by the following limit, which is proved to exist,
$$H_{top}(\phi,U)=\lim_{n\to \infty}\frac{\log[U:U_{-n}]}{n};$$
and the \emph{topological entropy} of $\phi$ is $$h_{top}(\phi)=\sup\{H_{top}(\phi,U):U\in\mathcal B(G)\}.$$

As mentioned above the scale function of any topological automorphism of any totally disconnected compact group is trivial. This is not the case for the topological entropy; indeed, for example for a prime $p$ the topological entropy of the left Bernoulli shift of $\Z(p)^\Z$ is $\log p$ (see Example \ref{beta}). So it is immediately clear that the topological entropy does not always coincide with the logarithm of the scale function.

\medskip
The following limit free formula for the computation of the topological entropy was proved in \cite{DG-htop} and gives the possibility to easily compare the scale function with the topological entropy. An analogous formula for the topological entropy of continuous endomorphisms of totally disconnected compact groups was previously given in \cite{DG-limitfree}.

\begin{theorem}\label{lf}\emph{\cite{DG-htop}}
Let $G$ be a totally disconnected locally compact group, $\phi:G\to G$ a topological automorphism and $U\in\mathcal B(G)$; then 
$$H_{top}(\phi,U)=\log[\phi(U_+):U_+].$$
\end{theorem}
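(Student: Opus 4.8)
The plan is to rewrite $[U:U_{-n}]$ as a telescoping product of indices, recognize its factors as a non-increasing sequence of positive integers, and prove that this sequence stabilizes precisely at $[\phi(U_+):U_+]$; the C\`esaro limit defining $H_{top}(\phi,U)$ will then equal $\log[\phi(U_+):U_+]$.

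\emph{Step 1 (telescoping product).} For $m\geq 0$ I would set $b_m=[\phi(U_m):\phi(U_m)\cap U]$. Since $\phi$ is an automorphism, $\phi(U_m\cap\phi^{-1}(U))=\phi(U_m)\cap U$, so $b_m=[U_m:U_m\cap\phi^{-1}(U)]$; and since $\phi(U_m)=\bigcap_{k=1}^{m+1}\phi^k(U)$ one has $\phi(U_m)\cap U=U_{m+1}$, hence also $b_m=[\phi(U_m):U_{m+1}]$, which is finite because $U_{m+1}$ is open in the compact group $\phi(U_m)$. Applying the automorphism $\phi^{k-1}$ to the inclusion $U_{-k}\subseteq U_{-(k-1)}$, and using $\phi^{k-1}(U_{-(k-1)})=U_{k-1}$ together with $\phi^{k-1}(U_{-k})=\phi^{-1}(U)\cap U_{k-1}$, gives $[U_{-(k-1)}:U_{-k}]=[U_{k-1}:U_{k-1}\cap\phi^{-1}(U)]=b_{k-1}$. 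Telescoping along the chain $U=U_0\supseteq U_{-1}\supseteq\dots\supseteq U_{-n}$ then yields $[U:U_{-n}]=\prod_{m=0}^{n-1}b_m$, so $H_{top}(\phi,U)=\lim_{n\to\infty}\tfrac1n\sum_{m=0}^{n-1}\log b_m$.

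\emph{Step 2 (monotonicity and reduction).} I would invoke the elementary fact that for subgroups $B\leq A$ of $G$ and any subgroup $C$ one has $[B:B\cap C]\leq[A:A\cap C]$, because the inclusion induces an injection $B/(B\cap C)\hookrightarrow A/(A\cap C)$. Since $U_{m+1}\subseteq U_m$ forces $\phi(U_{m+1})\subseteq\phi(U_m)$, applying this with $C=U$ gives $b_{m+1}\leq b_m$. Thus $(b_m)_{m\geq 0}$ is a non-increasing sequence of positive integers, hence eventually constant, say $b_m=c$ for $m\geq m_0$; the C\`esaro averages in Step 1 therefore converge, and $H_{top}(\phi,U)=\log c$. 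What remains is to prove $c=[\phi(U_+):U_+]$.

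\emph{Step 3 (identifying $c$ --- the main obstacle).} Here I would use that $\phi(U_+)=\bigcap_{m\geq 0}\phi(U_m)$ (valid since $\phi$ is a bijection and the $U_m$ decrease to $U_+$) and $\phi(U_+)\cap U=U_+$. One inequality is easy: from $\phi(U_+)\subseteq\phi(U_m)$ and the fact of Step 2 with $C=U$ we get $[\phi(U_+):U_+]\leq b_m=c$. For the reverse inequality --- the only point needing genuine topology --- note that for $m\geq m_0$ the injection $\phi(U_m)/(\phi(U_m)\cap U)\hookrightarrow\phi(U_{m_0})/(\phi(U_{m_0})\cap U)$ is a bijection of $c$-element sets, so $\phi(U_m)$ meets exactly the same $c$ cosets $g_1U,\dots,g_cU$ of $U$ as $\phi(U_{m_0})$. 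For each $i$, the sets $g_iU\cap\phi(U_m)$ with $m\geq m_0$ are nonempty, decreasing in $m$, and compact --- each is a coset of the open (hence closed) subgroup $\phi(U_m)\cap U$ inside the compact group $\phi(U_m)$ --- so by the finite intersection property $g_iU\cap\phi(U_+)\neq\emptyset$. Hence $\phi(U_+)$ meets the $c$ distinct cosets $g_1U,\dots,g_cU$, giving $[\phi(U_+):U_+]=[\phi(U_+):\phi(U_+)\cap U]\geq c$. Thus $c=[\phi(U_+):U_+]$ and $H_{top}(\phi,U)=\log[\phi(U_+):U_+]$. The index computations in Step 1 and the coset-map injection in Step 2 are routine; the essential difficulty is the compactness argument just sketched, which guarantees that the limit subgroup $\phi(U_+)$ still meets \emph{all} $c$ cosets of $U$ visible in the approximating subgroups $\phi(U_m)$, so that no index is lost in passing to the limit.
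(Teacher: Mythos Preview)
Your proof is correct. The telescoping in Step~1, the monotonicity in Step~2, and the compactness argument in Step~3 are all sound; in particular, the key point---that each $g_iU\cap\phi(U_m)$ is a (nonempty, compact) coset of $\phi(U_m)\cap U$ inside the compact group $\phi(U_m)$, so the finite intersection property forces $g_iU\cap\phi(U_+)\neq\emptyset$---is carried out cleanly.

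As for comparison: the paper does not actually prove this theorem. It is stated as Theorem~\ref{lf} with the citation \cite{DG-htop} and is used as a black box throughout (notably in Corollary~\ref{s=min}, Proposition~\ref{nubb}, and both proofs of Theorem~\ref{nubbanale}). The paper only remarks that an analogous formula in the compact case appeared earlier in \cite{DG-limitfree}. So there is no in-paper argument to compare yours against; what you have written is a self-contained proof of the cited result, and its structure---reduce $[U:U_{-n}]$ to a product of indices $[\phi(U_m):U_{m+1}]$, observe these stabilize, then identify the stable value via a compactness argument passing to $U_+=\bigcap_m U_m$---is the natural route and is in the spirit of the limit-free computations of \cite{DG-limitfree,DG-htop} that the paper alludes to.
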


In \cite{DG-htop} this formula is applied to verify the basic properties of the topological entropy, well-known for compact groups (see \cite{St}), also for topological automorphisms of totally disconnected locally compact groups. These properties are the so-called Logarithmic Law, Invariance under conjugation, Monotonicity for subgroups and quotients, Weak Addition Theorem and Continuity for inverse limits (see Fact \ref{properties} below).

\paragraph*{Contents of the paper.}
\bigskip
\bigskip
The paper is organized as follows. 

\medskip
In the first part of Section \ref{scale-sec} we recall some basic properties of the tidy subgroups, which are applied in the following sections to prove the main results of this paper. Then we give a background on a subgroup considered in \cite{BW} and studied more in deed and given a name in \cite{Willis4}\NB, which is strongly related to tidy subgroups and the scale function. Namely, for $G$ a totally disconnected locally compact group and $\phi:G\to G$ a topological automorphism, the \emph{nub} of $\phi$ is the intersection of all subgroups $U\in\mathcal B(G)$ tidy for $\phi$, that is, in view of Theorem \ref{TP},
\begin{equation}\label{DefNUB}
\nub(\phi)=\bigcap\{U\in\mathcal{B}(G): U\in\mathcal M(G,\phi)\}.
\end{equation}
Clearly, $\mathcal M(G,\phi)$ is a local base at $e_G$ precisely when $\nub(\phi)=\{e_G\}$ (see Corollary \ref{tidylocalbase}). Moreover, $\nub(\phi)$ is $\phi$-stable and compact (see Fact \ref{nub_erg}), so $s(\phi\restriction_{\nub(\phi)})=1$.

\medskip
In Section \ref{comp} we compare the values of the logarithm of the scale function with those of the topological entropy. Indeed, we see that for $G$ a totally disconnected locally compact group and $\phi:G\to G$ a topological automorphism, they are respectively the $\min$ and the $\sup$ of the same subset of $\log\N_+$, that is
$$\{\log [\phi(U_+):U_+]:U\in\mathcal B(G)\}.$$
An immediate consequence is that the inequality 
\begin{equation}\label{logs<htop}
\log s(\phi)\leq h_{top}(\phi)
\end{equation}
holds in general (see Theorem \ref{logs<htop-th}). Moreover, this inequality can be strict, even in the non-compact (abelian) case (see Example \ref{example}), and this occurs precisely when the nub is not trivial. Indeed, the following theorem gives the precise relation between the scale function and the topological entropy, so answers the above mentioned question motivating this paper.

\begin{theorem}\label{nubbanale}
Let $G$ be a totally disconnected locally compact group and $\phi:G\to G$ a topological automorphism. Then $\log s(\phi)=h_{top}(\phi)$ if and only if $\nub(\phi)=\{e_G\}$.
\end{theorem}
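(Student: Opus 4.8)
The plan is to exploit the characterization, recalled in Section \ref{comp}, that both $\log s(\phi)$ and $h_{top}(\phi)$ are obtained from the same set of values $\{\log[\phi(U_+):U_+]:U\in\mathcal B(G)\}$, the former as its minimum and the latter as its supremum. By Theorem \ref{TP} and Theorem \ref{lf}, the minimum is attained exactly on the tidy subgroups $U\in\mathcal M(G,\phi)$, where $[\phi(U_+):U_+]=s(\phi)$. So the equality $\log s(\phi)=h_{top}(\phi)$ holds if and only if $H_{top}(\phi,U)=\log s(\phi)$ for \emph{every} $U\in\mathcal B(G)$, i.e.\ the tidy subgroups already realize the supremum. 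The task is thus to show that this uniformity over all of $\mathcal B(G)$ is equivalent to $\nub(\phi)=\{e_G\}$.

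For the direction ``$\nub(\phi)=\{e_G\}\Rightarrow$ equality'': by Corollary \ref{tidylocalbase}, when the nub is trivial the family $\mathcal M(G,\phi)$ of tidy subgroups is a local base at $e_G$. Given an arbitrary $V\in\mathcal B(G)$, I would pick a tidy $U\in\mathcal M(G,\phi)$ with $U\subseteq V$ and use monotonicity of $H_{top}(\phi,-)$ with respect to inclusion of subgroups from $\mathcal B(G)$ (part of Fact \ref{properties}), so that $H_{top}(\phi,V)\le H_{top}(\phi,U)=\log s(\phi)$; since the reverse inequality $H_{top}(\phi,V)\ge\log s(\phi)$ is automatic (the scale is the minimum of these values), we get $H_{top}(\phi,V)=\log s(\phi)$ for all $V$, hence $h_{top}(\phi)=\log s(\phi)$.

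For the converse ``$\nub(\phi)\neq\{e_G\}\Rightarrow$ strict inequality'': here I expect the main obstacle. The idea is that a nontrivial nub produces extra ``entropy-carrying'' directions inside it that the tidy subgroups cannot see. Concretely, $\nub(\phi)$ is a compact $\phi$-stable subgroup (Fact \ref{nub_erg}) on which $\phi$ restricts to a topological automorphism with $s(\phi\restriction_{\nub(\phi)})=1$; the point is to show $h_{top}(\phi\restriction_{\nub(\phi)})>0$ whenever $\nub(\phi)\ne\{e_G\}$, and then lift this to $h_{top}(\phi)>\log s(\phi)$ via monotonicity for subgroups. The claim $h_{top}(\phi\restriction_{\nub(\phi)})>0$ for a nontrivial nub should follow from structural properties of the nub established in \cite{Willis4, BW}: the restriction of $\phi$ to the nub has no nontrivial $\phi$-invariant compact open subgroup (otherwise one could shrink a tidy subgroup and contradict the definition \eqref{DefNUB} of the nub as the intersection of \emph{all} tidy subgroups), and for automorphisms of profinite groups the absence of an invariant open subgroup forces positive topological entropy. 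I would isolate this as a separate lemma, proving it by finding, inside the nub, for each open $W$ a strictly decreasing chain $W\supsetneq W\cap\phi(W)\supsetneq\dots$ with indices bounded below by a fixed integer $>1$, giving $H_{top}(\phi\restriction_{\nub},W)=\log[\phi(W_+):W_+]>0$; the subtlety is ensuring such a $W$ exists witnessing a uniform lower bound, which is exactly where one uses that no open subgroup of the nub is $\phi$-invariant. Finally, one assembles: $h_{top}(\phi)\ge h_{top}(\phi\restriction_{\nub(\phi)})>0=\log s(\phi\restriction_{\nub(\phi)})$, and since $s(\phi)$ need not be $1$ one argues instead that the values $H_{top}(\phi,U)$ for $U$ shrinking toward a tidy subgroup are bounded below by $\log s(\phi)+h_{top}(\phi\restriction_{\nub(\phi)})$, using a weak-addition-type estimate (Fact \ref{properties}) across the extension by the nub, whence $h_{top}(\phi)>\log s(\phi)$.
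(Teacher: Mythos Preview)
Your sufficiency direction ($\nub(\phi)=\{e_G\}\Rightarrow$ equality) is correct and is exactly the argument of Proposition \ref{nubb}: once the tidy subgroups form a local base, antimonotonicity of $H_{top}(\phi,-)$ (stated just before Claim \ref{basesuff}) together with $H_{top}(\phi,V)\ge\log s(\phi)$ gives the result. One small correction: the antimonotonicity is not part of Fact \ref{properties} but is recorded separately in Section \ref{comp}.

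Your necessity direction, however, has a genuine gap at the final ``weak-addition-type estimate''. The inequality you need, namely $H_{top}(\phi,U)\ge \log s(\phi)+h_{top}(\phi\restriction_{\nub(\phi)})$ for suitable $U$, is \emph{not} available from Fact \ref{properties}: item (d) there is only the Weak Addition Theorem for direct products, and the paper explicitly records (Remark following Fact \ref{closedsubgroups}) that the Addition Theorem for extensions is open for totally disconnected locally compact groups. Your step~1 (that $h_{top}(\phi\restriction_{\nub(\phi)})>0$ when the nub is nontrivial) is fine and follows from Corollary \ref{s=1}, but without the additive lower bound it only yields $h_{top}(\phi)>0$, which says nothing when $s(\phi)>1$. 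Moreover, $\nub(\phi)$ is not asserted to be normal in general, so passing to the quotient $G/\nub(\phi)$ is not automatic either.

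The paper's two proofs bypass this obstacle entirely by exploiting a fact you did not use: for $U$ \emph{tidy above}, one has the exact equality $s(\phi,U)=[\phi(U_+):U_+]$ (Corollary \ref{s=min}), hence $H_{top}(\phi,U)=\log s(\phi,U)$ by Theorem \ref{lf}. Since the tidy-above subgroups form a local base (Corollary \ref{esiste n}), one argues directly: either (second proof) assume equality, deduce that every tidy-above $U$ has $\log s(\phi,U)=H_{top}(\phi,U)\le h_{top}(\phi)=\log s(\phi)$, so $U$ is minimizing, hence tidy, hence $\mathcal M(G,\phi)$ is a local base and $\nub(\phi)=\{e_G\}$; or (first proof) pick $g\in\nub(\phi)\setminus\{e_G\}$ and observe that the tidy-above $U$ with $g\notin U$ form a local base of non-tidy subgroups, each satisfying $H_{top}(\phi,U)=\log s(\phi,U)>\log s(\phi)$. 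The point is that tidy-above-ness alone already forces $H_{top}(\phi,U)$ to equal $\log s(\phi,U)$, so strict inequality of the latter with $\log s(\phi)$ transfers immediately---no addition theorem is needed.
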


In the first version of this paper Theorem \ref{nubbanale} was formulated as a conjecture. More precisely, the sufficiency of the condition $\nub(\phi)=\{e_G\}$ for the equality $\log s(\phi)=h_{top}(\phi)$ was proved (see Proposition \ref{nubb}), while we only conjectured the necessity of that condition. 
Two different proofs of the necessity, included at the end of Section \ref{comp}, were offered to us by Udo Baumgartner (a more topological one) and by Pablo Spiga (a more algebraically oriented one). 

\medskip
In Section \ref{ep} we give the properties of the scale function with respect to the typical properties of the topological entropy. Logarithmic Law, Invariance under conjugation and Monotonicity for subgroups and quotients were proved in \cite{Willis,Willis2}, while we see that also the Weak Addition Theorem holds true and we discuss Continuity for direct and inverse limits.
 
\medskip
In Section \ref{yf-bt} we present an explicit computation of the scale function of any topological automorphism of $\Q_p^n$, where $\Q_p$ denotes the field of $p$-adic numbers and $n$ is a positive integer (see Theorem \ref{formulaQpN}). This result is inspired by the so-called $p$-adic Yuzvinski Formula for the topological entropy. Indeed, the Yuzvinski Formula was proved in \cite{Y} by Yuzvinski and it gives the values of the topological entropy of topological automorphisms $\phi$ of $\widehat \Q^n$ in terms of the Mahler measure of the characteristic polynomial of $\phi$. A different and clear proof of the Yuzvinski Formula is given in \cite{LW}, and it is based on the computation of the topological entropy of topological automorphisms of $\Q_p^n$, that is the result we referred to above as $p$-adic Yuzvinski Formula.
A particular case of Theorem \ref{formulaQpN} (when all eigenvalues of $\phi$ belong to $\Q_p$) was mentioned without proof in \cite{Willis3}. Moreover, one can obtain this result directly from the general method given by Gl\"ockner in \cite{Gl2} for computing the scale function on $p$-adic Lie groups.

\smallskip
In the second part of Section \ref{yf-bt} we assume the totally disconnected locally compact group $G$ to be abelian. Under the hypothesis that $G$ is covered by its compact subgroups, that ensures total disconnectedness of the Pontryagin dual $\widehat G$ of $G$, we prove that 
$$s(\phi)=s(\widehat \phi),$$ 
where $\widehat\phi:\widehat G\to \widehat G$ is the dual automorphism of the topological automorphism $\phi$ of $G$ (see Theorem \ref{BTS}). This is a so-called Bridge Theorem, inspired by the analogous one from \cite{DG-btlca} connecting the topological entropy with the algebraic entropy in the same setting.

\subsection*{Notation and terminology}
As usual, $\N$ denotes the set of natural numbers and $\N_+$ the set of positive integers, $\Prm$ denotes the set of all prime numbers,
$\Z$ denotes the group of integers and $\T$ denotes the circle group with its usual topology.
For $p\in\Prm$, $\Z(p)=\Z/p\Z$ denotes the cyclic group of order $p$, $\J_p$ denotes the group/ring of $p$-adic integers and 
 $\Q_p$ denotes the field of $p$-adic numbers.
 
If $G$ and $H$ are topological groups we indicate by $G\cong H$ that they are topological isomorphic, that is, they are isomorphic both as groups and as topological spaces.
 
Let $F$ be a group and consider $G=F^{\Z}$; then the left Bernoulli shift on $G$ is the automorphism $\sigma\colon G\to G$ defined by $\sigma\bigl((f_i)_{i\in\Z}\bigr)=(f_{i+1})_{i\in\Z}$; if $F$ is a topological group and $G$ is endowed with the product topology, then $\sigma:G\to G$ is a topological automorphism.
 
Let $G$ be a topological abelian group, then the Pontryagin dual $\widehat{G}$ of $G$ is the (abelian) group of all continuous 
homomorphisms $\chi\colon G\to\T$ (i.e., characters), endowed with the compact-open topology. If $\phi\colon G\to G$ is a continuous endomorphism, then its 
dual homomorphism $\widehat{\phi}\colon \widehat{G}\to\widehat{G}$ is defined by $\widehat{\phi}(\chi)=\chi\circ\phi$ for every $\chi\in\widehat{G}$. If $G$ is a locally compact abelian group, so is its dual group $\widehat{G}$, and the dual endomorphism $\widehat \phi:\widehat G\to \widehat G$ is continuous. Moreover, if $G$ is finite then $G\cong \widehat{G}$, and $G$ is discrete if and only if $\widehat{G}$ is compact.
Recall also that, if $X\subseteq G$, the annihilator of $X$ in $\widehat G$ is
$X^\bot=\{\chi\in\widehat{G}:\chi(x)=0\ \forall x\in X\}$ and, if $Y\subseteq \widehat{G}$, the annihilator of $Y$ in $G$ is $Y^\bot=\{g\in G: \chi(g)=0\ \forall \chi\in Y\}.$

\subsection*{Acknowledgements}

It is a pleasure to thank Thomas Weigel for asking the question that inspired this work, George Willis for sending us his preprint \cite{Willis4} which was fundamental for give an answer to the question, Udo Baumgartner and Pablo Spiga for their kind permission to include here their proofs of our conjecture.
Last, but not least, we thank the referee for the sharp and useful comments and suggestions.

\section{Scale function and tidy subgroups}\label{scale-sec}

In the first part of this section we are mainly concerned with basic properties of tidy subgroups.

\medskip
The next lemma collects in particular known immediate examples of minimizing subgroups.

\begin{lemma}\label{invariantsubgroup}
Let $G$ be a totally disconnected locally compact group, $\phi\colon G\to G$ a topological automorphism and $U\in\mathcal B(G)$. Then:
\begin{itemize}
\item[(a)] if $U$ is $\phi$-invariant then $U$ is minimizing for $\phi$ and $s(\phi)=s(\phi,U)=1$;
\item[(b)] $U$ is minimizing for $\phi$ if and only if $U$ is minimizing for $\phi^{-1}$, i.e., $\mathcal M(G,\phi)=\mathcal M(G,\phi^{-1})$;
\item[(c)] if $U$ is inversely $\phi$-invariant then $U$ is minimizing for $\phi$ and $s(\phi)=s(\phi,U)\geq 1$, with equality exactly when $U$ is also $\phi$-stable;
\item[(d)] consequently, the following conditions are equivalent:
\begin{itemize}
\item[(i)] $s(\phi) = 1$ 
\item[(ii)] there exists a $\phi$-invariant $U \in \mathcal B(G)$;
\item[(iii)] $\mathcal M(G,\phi)=\{U\in\mathcal B(G): U\ \text{$\phi$-invariant}\}$.
\end{itemize}
\end{itemize}
\end{lemma}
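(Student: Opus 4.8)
The plan is to read everything off the definition $s(\phi,U)=[\phi(U):U\cap\phi(U)]$, together with the trivial remarks that this index is always $\ge 1$ and equals $1$ exactly when $\phi(U)\subseteq U$, plus one comparison between $s(\phi,\cdot)$ and $s(\phi^{-1},\cdot)$. For (a): if $\phi(U)\subseteq U$ then $U\cap\phi(U)=\phi(U)$, so $s(\phi,U)=1$; since $s(\phi,V)\ge 1$ for all $V\in\mathcal B(G)$, the minimum defining $s(\phi)$ equals $1=s(\phi,U)$, so $U$ is minimizing.

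The substantive point is (b). I would first note that, since $\phi$ is a group isomorphism, it carries the pair $U\cap\phi^{-1}(U)\le\phi^{-1}(U)$ onto $\phi(U)\cap U\le U$, so $s(\phi^{-1},U)=[\phi^{-1}(U):U\cap\phi^{-1}(U)]=[U:\phi(U)\cap U]$. Next, letting $\mu$ be a Haar measure on $G$ and $\Delta=\mu(\phi(V))/\mu(V)$ the module of $\phi$ --- a fixed positive number independent of $V\in\mathcal B(G)$, because $A\mapsto\mu(\phi(A))$ is again a left Haar measure and Haar measure is unique up to scaling --- one obtains, for every $V\in\mathcal B(G)$,
$$s(\phi,V)=\frac{\mu(\phi(V))}{\mu(V\cap\phi(V))}=\Delta\cdot\frac{\mu(V)}{\mu(V\cap\phi(V))}=\Delta\cdot s(\phi^{-1},V),$$
where all these measures are finite and positive since $V$, $\phi(V)$ and $V\cap\phi(V)$ are compact open. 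As $\Delta>0$ does not depend on $V$, the functions $V\mapsto s(\phi,V)$ and $V\mapsto s(\phi^{-1},V)$ attain their minimum on exactly the same subgroups, which is precisely $\mathcal M(G,\phi)=\mathcal M(G,\phi^{-1})$ (and, incidentally, $s(\phi)=\Delta\,s(\phi^{-1})$). Alternatively one could invoke Theorem \ref{TP} together with the symmetry of the tidiness conditions under $\phi\leftrightarrow\phi^{-1}$, using $U_{\phi+}=U_{\phi^{-1}-}$ and $U_{\phi-}=U_{\phi^{-1}+}$; but the Haar-measure argument is self-contained and sidesteps having to justify the symmetry of ``tidy below''.

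Part (c) is again immediate: if $U\subseteq\phi(U)$ then $U\cap\phi(U)=U$, so $s(\phi,U)=[\phi(U):U]\ge 1$, with equality iff $\phi(U)=U$, i.e.\ iff $U$ is $\phi$-stable; and since $U\subseteq\phi(U)$ says exactly that $U$ is $\phi^{-1}$-invariant, part (a) applied to $\phi^{-1}$ makes $U$ minimizing for $\phi^{-1}$, hence minimizing for $\phi$ by (b), so $s(\phi)=s(\phi,U)=[\phi(U):U]$. For (d): (ii)$\Rightarrow$(i) is (a); for (i)$\Rightarrow$(iii), if $s(\phi)=1$ then any $U\in\mathcal M(G,\phi)$ satisfies $[\phi(U):U\cap\phi(U)]=s(\phi,U)=1$, hence $\phi(U)\subseteq U$, while every $\phi$-invariant member of $\mathcal B(G)$ lies in $\mathcal M(G,\phi)$ by (a); finally (iii)$\Rightarrow$(ii) because $\mathcal M(G,\phi)$ is nonempty, the defining minimum being attained. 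This closes the loop.

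I expect the only genuine obstacle to be (b): the content there is realizing that $s(\phi,\cdot)$ and $s(\phi^{-1},\cdot)$ differ only by the constant factor $\mathrm{mod}(\phi)$, which is exactly what forces their sets of minimizers to coincide; everything else is bookkeeping with indices of compact open subgroups.
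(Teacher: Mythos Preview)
Your argument is correct. The logical structure matches the paper's exactly: (a) is read off the definition, (c) is deduced from (a) applied to $\phi^{-1}$ together with (b), and (d) is a short chase using (a) and the fact that the minimum in the definition of $s(\phi)$ is attained. The paper's proof of (b) is simply a citation to \cite[Corollary~1]{Willis}; you instead supply a self-contained argument via the module of $\phi$, showing $s(\phi,V)=\Delta\cdot s(\phi^{-1},V)$ for all $V\in\mathcal B(G)$, which forces the minimizing sets to coincide. This is in fact essentially Willis's original argument, so the difference is one of exposition rather than method: your write-up is more detailed and avoids the external reference, at the cost of introducing Haar measure into an otherwise purely index-theoretic lemma.
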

\begin{proof}
(a) follows immediately from the definition of scale function, (b) is \cite[Corollary 1]{Willis}, (c) follows from (a) and (b), while (d) from (a) and the definition of scale function.
\end{proof}

If $G$ is a totally disconnected locally compact group and $\phi:G\to G$ a topological automorphism, 
for a subgroup $U\in\mathcal B(G)$ one can consider the following diagram, helping to memorize better 
the subgroups defined in \eqref{upiu}, \eqref{upiupiu} and \eqref{uenne}, and their interrelations. Note that $U_+U_-$ need not be a subgroup of $G$;
this condition is satisfied exactly when $U$ is tidy above for $\phi$, and in this case the diagram is contained in the lattice of all subgroups of $G$.

\begin{equation}\label{carpet}
\xymatrix@!R@-2.1pc{
&&& & \ldots &\phi^{-2}(U)\ar@{-}[dd] & \phi^{-1}(U)\ar@{-}[d] & U\ar@{-}[dl]\ar@{-}[dr] & \phi(U) \ar@{-}[d]& \phi^2(U) \ar@{-}[dd]& \ldots & &&&& \\
U_{--}&&&&& & U_{-1}\ar@{-}[dl] &  & U_1\ar@{-}[dr] &  & &&&& U_{++} \\
&\ddots\ar@{-}[ul]&&& & U_{-2}\ar@{-}[dl] && && U_2\ar@{-}[dr] &&&& \Ddots\ar@{-}[ur]&\\
&&\phi^{-1}(U_-)\ar@{-}[ul] & &\Ddots\ar@{-}[dl] &&& U_+U_-  &&& \ddots\ar@{-}[dr]&&\phi(U_+)\ar@{-}[ur]&&\\
&&&U_{-}\ar@{-}[ul]\ar@{-}[ddrrrr]\ar@{-}[urrrr] &&&&&&&& U_+\ar@{-}[ur] \ar@{-}[ddllll]\ar@{-}[ullll]&&&\\
&&&&&&& &&&&&&&\\
&&&&&&& U_-\cap U_+ &&&&&&&
}
\end{equation}
The motivation to introduce these subgroups is to measure the extent to which the subgroup $U$ is $\phi$-invariant or inversely $\phi$-invariant. Indeed, $U$ is $ \phi$-invariant if and only if $U_- = U$, while $U$ is inversely $\phi$-invariant if and only if 
$U_+= U$.

\medskip
The subgroup $U_+$ is  compact and it is the largest inversely $\phi$-invariant subgroup contained in $U$; moreover, we have an increasing 
chain of subgroups
$$U_+\subseteq\phi(U_+)\subseteq\ldots\subseteq\phi^{n}(U_+)\subseteq\ldots\subseteq U_{++}=\bigcup_{n\in\N}\phi^n(U_+),$$
where all indices $[\phi^{n+1}(U_+):\phi^{n}(U_+)]$ coincide with $[\phi(U_+):U_+]$ (so are finite, as noted above). Hence 
$U_{++}$, which is the increasing union of this chain, is a subgroup of $G$ that contains $U_+$. 
If $U$ is tidy below for $\phi$, that is $U_{++}$ is closed in $G$, then $U_{++}$ is locally compact with the subspace topology, hence a Baire space; so there exists an integer $n\geq0$ such that $\phi^n(U_+)$ 
is open in $U_{++}$; in this case $U_+$ is also open in $U_{++}$. The converse implication holds true as well, so we have the following lemma characterizing the subgroups tidy below for $\phi$.

\begin{lemma}
Let $G$ be a totally disconnected locally compact group, $\phi\colon G\to G$ a topological automorphism and $U\in\mathcal{B}(G)$. Then the following conditions are equivalent:
\begin{itemize}
\item[(a)]$U$ is tidy below for $\phi$;
\item[(b)]$U_{++}$ is locally compact;
\item[(c)]$U_+$ is open in $U_{++}$.
\end{itemize}
\end{lemma}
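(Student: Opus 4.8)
The plan is to establish the cycle of implications $(a)\Rightarrow(b)\Rightarrow(c)\Rightarrow(a)$, using only the observations collected in the paragraph preceding the statement: $U_+$ is a compact subgroup of $G$, the chain $U_+\subseteq\phi(U_+)\subseteq\cdots$ is increasing with all indices $[\phi^{n+1}(U_+):\phi^n(U_+)]$ finite, and $U_{++}=\bigcup_{n\in\N}\phi^n(U_+)$ is a subgroup of $G$ containing $U_+$. I would record at the outset one further consequence of these facts, needed below: $U_{++}$ is $\phi$-stable, since $\phi(U_{++})=\bigcup_{n\in\N}\phi^{n+1}(U_+)=\bigcup_{n\in\N}\phi^n(U_+)=U_{++}$, the term for $n=0$ being absorbed because $U_+\subseteq\phi(U_+)$. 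Hence $\phi$ (and $\phi^{-1}$) restricts to a homeomorphism of $U_{++}$ onto itself.

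For $(a)\Rightarrow(b)$: if $U$ is tidy below for $\phi$, then $U_{++}$ is closed in $G$, hence a closed subspace of the locally compact space $G$, and therefore locally compact in the subspace topology.

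For $(b)\Rightarrow(c)$: assume $U_{++}$ is locally compact, so it is a Baire space. It is the countable increasing union of the sets $\phi^n(U_+)$, each of which is compact, hence closed in the Hausdorff space $U_{++}$. By the Baire category theorem some $\phi^m(U_+)$ has non-empty interior in $U_{++}$; since a subgroup with non-empty interior is open, $\phi^m(U_+)$ is open in $U_{++}$. Applying the homeomorphism $\phi^{-m}$ of $U_{++}$, we conclude that $U_+=\phi^{-m}(\phi^m(U_+))$ is open in $U_{++}$.

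For $(c)\Rightarrow(a)$: if $U_+$ is open in $U_{++}$, then $U_{++}$ admits the compact open subgroup $U_+$, so $U_{++}$ is locally compact; since a locally compact subgroup of a Hausdorff topological group is closed, $U_{++}$ is closed in $G$, i.e.\ $U$ is tidy below for $\phi$. The only genuinely non-formal ingredients are the Baire category argument in $(b)\Rightarrow(c)$ — where the points to get right are that each $\phi^n(U_+)$ is closed in $U_{++}$ and that a subgroup with non-empty interior is open — and, in $(c)\Rightarrow(a)$, the standard fact that a locally compact subgroup of a Hausdorff group is closed (a dense locally compact subspace of its closure is open in it, and an open subgroup is closed). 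Everything else is bookkeeping with the chain of subgroups already set up above.
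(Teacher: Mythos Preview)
Your proof is correct and follows essentially the same route as the paper: the paper's argument is the paragraph immediately preceding the lemma, which sketches $(a)\Rightarrow(b)\Rightarrow(c)$ via the Baire category theorem and then simply asserts that ``the converse implication holds true as well''. Your write-up fills in that last step with the standard fact that a locally compact subgroup of a Hausdorff group is closed, and makes explicit (via the $\phi$-stability of $U_{++}$) why openness of $\phi^m(U_+)$ implies openness of $U_+$; these are exactly the details the paper leaves to the reader.
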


\NB The next Lemma \ref{taeq} was inspired by the proof of \cite[Lemma 1]{Willis} concerning the case of inner automorphisms; it provides characterizations of the tidy above subgroups. The following elementary fact from group theory is needed.

\begin{claim}\label{claimbase}
Let $G$ be a group and let $A$, $B$, $C$ be subgroups of $G$. If $C\subseteq B$ and $B\subseteq A\cdot C$, then $B=(A\cap B)C=C(A\cap B)$.
\end{claim}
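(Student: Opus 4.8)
The plan is to prove the purely group-theoretic Claim~\ref{claimbase} directly by elementary manipulation of cosets, using only the hypotheses $C\subseteq B$ and $B\subseteq A\cdot C$. First I would establish the inclusion $B\subseteq (A\cap B)C$. Take an arbitrary $b\in B$; by $B\subseteq A\cdot C$ we may write $b=ac$ with $a\in A$, $c\in C$. Then $a=bc^{-1}$, and since $b\in B$ and $c\in C\subseteq B$ we get $a\in B$ as well; hence $a\in A\cap B$ and $b=ac\in(A\cap B)C$. This gives $B\subseteq (A\cap B)C$.

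The reverse inclusion $(A\cap B)C\subseteq B$ is immediate: $A\cap B\subseteq B$ and $C\subseteq B$, and $B$ is a subgroup, so it is closed under products; thus $(A\cap B)C\subseteq B$. Combining the two inclusions yields $B=(A\cap B)C$.

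Finally, to obtain $B=C(A\cap B)$ I would simply rerun the first argument on the other side, or invoke symmetry. Concretely, for $b\in B$ write $b=c'a'$ — this is possible because $B\subseteq A\cdot C$ implies $B^{-1}\subseteq C^{-1}A^{-1}=C\cdot A$, and $B^{-1}=B$, so $B\subseteq C\cdot A$ as well; then the same cancellation shows $a'=c'^{-1}b\in A\cap B$, whence $b\in C(A\cap B)$. The inclusion $C(A\cap B)\subseteq B$ is again trivial since $B$ is a subgroup containing both factors. Therefore $B=(A\cap B)C=C(A\cap B)$, as claimed.

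There is essentially no obstacle here: the statement is a standard coset-counting / modular-law type fact, and the only thing to be careful about is not to assume any of $A,B,C$ is normal (so one cannot freely commute the factors $A\cap B$ and $C$), which is why the two equalities $B=(A\cap B)C$ and $B=C(A\cap B)$ must each be argued, using that $B$ — being a subgroup — equals its own inverse set, so that $B\subseteq A\cdot C$ self-improves to $B\subseteq C\cdot A$.
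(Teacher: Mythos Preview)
Your argument is correct. The paper does not supply a proof of this claim at all; it is introduced merely as an ``elementary fact from group theory'' and stated without justification, so your direct verification via the decomposition $b=ac$ and the observation $B=B^{-1}\subseteq C\cdot A$ is exactly the kind of routine check the authors had in mind and chose to omit.
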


\begin{lemma}\label{taeq}
Let $G$ be a totally disconnected locally compact group, $\phi\colon G\to G$ a topological automorphism and $U\in\mathcal{B}(G)$. Then the following conditions are equivalent:
\begin{itemize}
  \item[(a)] $U$ is tidy above for $\phi$;
  \item[(b)] $\phi (U)=\phi (U_+)(U\cap \phi (U))$;
  \item[(c)] $\phi^n (U)=\phi^n (U_+)U_n$ for every integer $n\geq 0$;
  \item[(d)] $U_+\cap uU_-\neq \emptyset$ for every $u\in U$. 
\end{itemize}
\end{lemma}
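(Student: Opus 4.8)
\textbf{Proof plan for Lemma~\ref{taeq}.}

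The plan is to establish the cycle of implications (a)$\Rightarrow$(d)$\Rightarrow$(c)$\Rightarrow$(b)$\Rightarrow$(a), using Claim~\ref{claimbase} as the main algebraic engine. First I would unwind the definitions: $U$ tidy above means $U=U_+U_-$, and one should keep in mind throughout that $U_+=U\cap\phi(U_+)$ and more generally that $\phi^n(U_+)\cap U=U_+$, that $U_n=\bigcap_{k=0}^n\phi^k(U)$, and that $\phi(U_-)\supseteq U_-$ while $\phi(U_+)\supseteq U_+$ (these are recorded before the statement and in the diagram \eqref{carpet}).

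For (a)$\Rightarrow$(d): assuming $U=U_+U_-$, given $u\in U$ write $u=ab$ with $a\in U_+$, $b\in U_-$. Then $a=ub^{-1}\in uU_-$ since $b^{-1}\in U_-$ (as $U_-$ is a subgroup), so $a\in U_+\cap uU_-$, which is therefore nonempty. For (d)$\Rightarrow$(c): I would argue by induction on $n$, the case $n=0$ being the tautology $U=U_+U_0$ (note $U_0=U$ and $U_+\subseteq U$). Actually it is cleaner first to derive (d)$\Rightarrow$(a): condition (d) says every $u\in U$ can be written $u=ab^{-1}$ with $a\in U_+$ and $b\in U_-$, i.e. $u\in U_+U_-$, giving $U\subseteq U_+U_-$; the reverse inclusion $U_+U_-\subseteq U$ is automatic since $U_+,U_-\subseteq U$. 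So (d)$\Leftrightarrow$(a) is immediate, and (a)$\Rightarrow$(b), (a)$\Rightarrow$(c) can then be proved directly, closing the loop by (b)$\Rightarrow$(a) or (c)$\Rightarrow$(a).

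The substantive step is deducing the multiplicative identities (b) and (c) from tidiness above, and conversely recovering tidiness from (b). For (a)$\Rightarrow$(c): apply $\phi^n$ to $U=U_+U_-$ to get $\phi^n(U)=\phi^n(U_+)\phi^n(U_-)$; since $\phi^n(U_-)\supseteq U_-$ one has $\phi^n(U_-)\subseteq\bigcap_{k=0}^{n}\phi^k(U)$ after intersecting appropriately — here I would invoke Claim~\ref{claimbase} with $A=\phi^n(U_+)$, $B=\phi^n(U)$, $C=U_n$, checking the hypotheses $C\subseteq B$ (clear, since $U_n\subseteq U\subseteq\phi^n(U)$ as $\phi^n(U)\supseteq\phi^n(U_-)\supseteq U_-$... one must be careful about directions; the correct containment $U_n\subseteq\phi^n(U)$ holds because $U_n$ is an intersection including the term $\phi^n(U)$) and $B\subseteq A\cdot C$ (this is the content extracted from $\phi^n(U)=\phi^n(U_+)\phi^n(U_-)$ together with $\phi^n(U_-)\cap U \subseteq U_n$). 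Claim~\ref{claimbase} then upgrades $B\subseteq AC$ to the exact factorization $B=(A\cap B)C$, and $\phi^n(U_+)\cap\phi^n(U)=\phi^n(U_+\cap U)=\phi^n(U_+)$, yielding (c). Taking $n=1$ in (c) gives (b) after noting $U_1=U\cap\phi(U)$. For (b)$\Rightarrow$(a): from $\phi(U)=\phi(U_+)(U\cap\phi(U))$ apply $\phi^{-1}$ and iterate/intersect to reconstruct $U=U_+U_-$, again using Claim~\ref{claimbase} to turn the inclusions into equalities and using that $U_-=\bigcap_{n}\phi^{-n}(U)$ is built from the tails.

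\textbf{Main obstacle.} The one delicate point is bookkeeping the directions of the containments for $U_n$, $U_{-n}$, $\phi^n(U_\pm)$ so that the hypotheses of Claim~\ref{claimbase} are genuinely satisfied at each application; a naive attempt easily applies the claim with the roles of ``invariant'' and ``inversely invariant'' swapped. Once the correct normalization is fixed (using $\phi(U_+)\supseteq U_+$, $\phi(U_-)\supseteq U_-$, and $U_+=U\cap\phi^n(U_+)$ for all $n$), each implication is a short computation. The inductive passage in (b)$\Rightarrow$(c) versus proving (c) directly from (a) is a matter of taste; I would prove (a)$\Rightarrow$(c)$\Rightarrow$(b)$\Rightarrow$(a) and separately (a)$\Leftrightarrow$(d), which is the most economical route and isolates Claim~\ref{claimbase} to exactly the two places where it is needed.
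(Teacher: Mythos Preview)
Your route differs from the paper's: they prove the cycle (a)$\Rightarrow$(b)$\Rightarrow$(c)$\Rightarrow$(d)$\Rightarrow$(a), while you shortcut (a)$\Leftrightarrow$(d) trivially and aim for (a)$\Rightarrow$(c)$\Rightarrow$(b)$\Rightarrow$(a). Your direct argument for (a)$\Rightarrow$(c) via Claim~\ref{claimbase} is correct and in fact tidier than the paper's two-step (a)$\Rightarrow$(b)$\Rightarrow$(c). But there is a genuine gap elsewhere.

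First a bookkeeping slip, exactly the one you flagged as your ``main obstacle'': $U_-$ is $\phi$-invariant, so $\phi(U_-)\subseteq U_-$, not $\supseteq$. Your (a)$\Rightarrow$(c) argument survives because the fact you actually need, $\phi^n(U_-)\subseteq U_n$, is true regardless (indeed $\phi^n(U_-)=\bigcap_{k\leq n}\phi^k(U)$), but the sentence as written does not derive it.

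The real problem is your (b)$\Rightarrow$(a). Applying $\phi^{-1}$ and iterating from (b) yields $U=U_+U_{-n}$ for every $n\geq 1$ (one gets $U_{-n}=\phi^{-n}(U_+)\,U_{-(n+1)}$ and then $\phi^{-n}(U_+)\subseteq U_+$ collapses the product). But the passage from ``$U=U_+U_{-n}$ for all $n$'' to ``$U=U_+U_-$'' is \emph{not} an algebraic statement: for a given $u\in U$ you must show that the nested family of nonempty compact sets $uU_{-n}\cap U_+$ has nonempty intersection, and this is precisely where the compactness of $U_+$ enters. The paper makes this explicit in its (c)$\Rightarrow$(d) step, defining $C_n(u)=U_+\cap uU_{-n}$ and invoking compactness to conclude $\bigcap_n C_n(u)=U_+\cap uU_-\neq\emptyset$. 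You never mention compactness, and Claim~\ref{claimbase} alone cannot bridge this; without local compactness the implication would fail. So your ``intersect'' step hides the one substantive analytic input of the lemma, and your diagnosis of the main obstacle as direction-bookkeeping misses the actual delicate point.
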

\begin{proof} 
(a)$\Rightarrow$(b) Let $U\in\mathcal{B}(G)$ be tidy above for $\phi$. This means that $U=U_-U_+$ and so $\phi(U)=\phi(U_-)\phi(U_+)$. Moreover, $\phi(U_-)\subseteq U_-\subseteq U$ and then $\phi(U)\subseteq U\phi(U_+)$. Now Claim \ref{claimbase} applied to $U$, $\phi(U)$ and $\phi(U_+)$ yields $\phi (U)=\phi (U_+)(U\cap \phi (U))$.

(b)$\Rightarrow$(c) Let $n\geq 0$. The inclusion $U_n\subseteq U$ is always satisfied, so
$$\phi(U_n)\subseteq \phi (U)=\phi (U_+)(U\cap \phi (U))\subseteq \phi (U_+)U,$$
thus Claim \ref{claimbase} applied to $U$, $\phi(U_n)$ and $\phi(U_+)$ yields 
\begin{equation}
\phi(U_n)=\phi (U_+)(U\cap \phi (U_n))=\phi (U_+)U_{n+1}.\label{equationUn}
\end{equation}
Using \eqref{equationUn} we prove by induction the condition in (c).
Indeed, the case $n=0$ is clear and the case $n=1$ is exactly the condition in (b). Now assume that $\phi^n (U)=\phi^n (U_+)U_n$. Therefore $\phi^{n+1}(U)=\phi^{n+1}(U_+)\phi(U_n)=\phi^{n+1}(U_+) U_{n+1}$, where the last equality follows from \eqref{equationUn} noting also that $\phi(U_+)\subseteq \phi^{n+1}(U_+)$.

(c)$\Rightarrow$(d) Let $u\in U$ and consider, for every integer $n\geq 0$, the subset $C_n(u)=U_+\cap u U_{-n}$. These subsets are compact and satisfy $C_{n+1}(u)\subseteq C_n(u)$. Moreover, since $\phi^n(U_{-n})=U_n$,
\begin{align*}
C_n(u)&=\{z\in U_+: z\in u U_{-n}\}\\
&=\{z\in U_+: u^{-1}\in z^{-1}U_{-n}\}\\
&=\{z\in U_+: \phi^n(u^{-1})\in \phi^n(z^{-1})U_n\}.
\end{align*}
Then $C_n(u)$ is non-empty in view of the condition in (c). By the compactness of $U_+$, the intersection $C=\bigcap_{n\in\N}C_n(u)$ is non-empty. Moreover, it coincides with $U_+\cap u U_-$; in fact, the inclusion $U_+\cap u U_-\subseteq C$ is clear. To verify the converse inclusion let $z\in C$, that exists since $C$ is non-empty; then $z\in U_+\cap u U_{-n}$ for every $n\geq 0$, in particular $z\in U_+$ and $u^{-1}z\in U_{-n}$ for every $n\geq 0$, that is $z\in U_+\cap uU_-$.
 
(d)$\Rightarrow$(a) For every $u\in U$ there exist $u_+\in U_+$ and $u_-\in U_-$ such that $u_+=uu_-$, that is $u=u_+(u_-)^{-1}$. This means that $U\subseteq U_+U_-$, that is $U$ is tidy above for $\phi$.
\end{proof}

This lemma has important consequences. In particular, the following corollary of Lemma \ref{taeq} and Theorem \ref{TP}, \NB which is contained in Step 1 of the proof of \cite[Theorem 3.1]{Willis2}, is one of the two main ingredients to prove in the next section the inequality announced in \eqref{logs<htop}.

\begin{corollary}\label{s=min}
Let $G$ be a totally disconnected locally compact group, $\phi\colon G\to G$ a topological automorphism and $U\in\mathcal{B}(G)$. Then $s(\phi,U)\geq [\phi (U_+):U_+]$; equality holds exactly when $U$ is tidy above for $\phi$.

In particular, $$s(\phi)=\min\{[\phi(U_+):U_+]:U\in\mathcal B(G)\}.$$ 
\end{corollary}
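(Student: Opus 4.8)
The plan is to prove the inequality $s(\phi,U)\geq [\phi(U_+):U_+]$ for an arbitrary $U\in\mathcal B(G)$, and then to identify the case of equality with tidiness above; the ``In particular'' clause will then follow by combining this with Theorem~\ref{TP}. The starting point is the observation that $U_+$ is inversely $\phi$-invariant (it is $\phi^{-1}$-invariant), so by Lemma~\ref{invariantsubgroup}(c) applied to the automorphism $\phi$ and the subgroup $U_+\in\mathcal B(\phi(U_+))$ — or rather directly from the definition of $s(\phi,U)$ — we want to compare $[\phi(U):U\cap\phi(U)]$ with $[\phi(U_+):U_+]$. The key algebraic input is Claim~\ref{claimbase} together with the chain $U_+\subseteq U\cap\phi(U)$ and $\phi(U_+)\subseteq\phi(U)$: first I would show that $\phi(U_+)\cap U=U_+$ (this is immediate from the definitions of $U_+$ and $U_{\phi-}$, since an element of $\phi(U_+)\cap U$ lies in $\phi^{n}(U)$ for all $n\geq 0$ hence in $U_+$), which will let me embed the quotient $\phi(U_+)/U_+$ into $\phi(U)/(U\cap\phi(U))$.

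More precisely, I would argue as follows. Let $V=U\cap\phi(U)$. Since $U_+\subseteq V$ and $U_+\subseteq\phi(U_+)\subseteq\phi(U)$, and since $\phi(U_+)\cap V=\phi(U_+)\cap U\cap\phi(U)=U_+$ by the computation above, the inclusion $\phi(U_+)\hookrightarrow\phi(U)$ induces an injection on coset spaces $\phi(U_+)/U_+\hookrightarrow\phi(U)/V$, which is exactly the inequality $[\phi(U):V]\geq[\phi(U_+):U_+]$, i.e. $s(\phi,U)\geq[\phi(U_+):U_+]$. Next, for the equality case: equality holds if and only if this injection is a bijection, i.e. if and only if $\phi(U)=\phi(U_+)\cdot V=\phi(U_+)(U\cap\phi(U))$. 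By Lemma~\ref{taeq}, condition (b) there, this is precisely the statement that $U$ is tidy above for $\phi$. So $s(\phi,U)=[\phi(U_+):U_+]$ exactly when $U$ is tidy above.

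Finally, for the ``In particular'' part: taking the minimum over $U\in\mathcal B(G)$ of both sides of $s(\phi,U)\geq[\phi(U_+):U_+]$ and using that $s(\phi)=\min\{s(\phi,U):U\in\mathcal B(G)\}$ gives $s(\phi)\geq\min\{[\phi(U_+):U_+]:U\in\mathcal B(G)\}$. For the reverse inequality, by Theorem~\ref{TP} there is a tidy $U\in\mathcal B(G)$ with $s(\phi)=s(\phi,U)$, and for this $U$ (being in particular tidy above) the equality case just established gives $s(\phi)=s(\phi,U)=[\phi(U_+):U_+]\geq\min\{[\phi(V_+):V_+]:V\in\mathcal B(G)\}$. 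Hence the two quantities agree.

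The only genuinely delicate point I anticipate is making the coset-space injection $\phi(U_+)/U_+\hookrightarrow\phi(U)/(U\cap\phi(U))$ rigorous — in particular verifying the ``transversality'' identity $\phi(U_+)\cap(U\cap\phi(U))=U_+$, which is what guarantees that distinct cosets of $U_+$ in $\phi(U_+)$ stay distinct after including into $\phi(U)$. Everything else is a routine application of Claim~\ref{claimbase}, Lemma~\ref{taeq}, and Theorem~\ref{TP}, together with the elementary fact that for subgroups $K\leq H$ of $G$ and any subgroup $L$ with $K\subseteq L$, one has $[H:L]\geq[HK':K']$ whenever $K'\leq H$ satisfies $K'\cap L=$ (appropriate intersection) — stated in the clean form above it is just the second isomorphism-type counting argument.
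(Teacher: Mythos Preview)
Your proof is correct and follows essentially the same route as the paper: the paper phrases the inequality via $[\phi(U):U_1]\geq[\phi(U_+)U_1:U_1]=[\phi(U_+):U_1\cap\phi(U_+)]=[\phi(U_+):U_+]$, which is exactly your coset-space injection $\phi(U_+)/U_+\hookrightarrow\phi(U)/U_1$ together with the key identity $\phi(U_+)\cap U_1=U_+$, and then invokes Lemma~\ref{taeq}(b) for the equality case and Theorem~\ref{TP} for the ``In particular'' clause just as you do. Your treatment of the equality direction (bijection $\Leftrightarrow$ $\phi(U)=\phi(U_+)U_1$ $\Leftrightarrow$ tidy above via Lemma~\ref{taeq}) is in fact slightly more explicit than the paper's.
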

\begin{proof}
Since $\phi(U)\supseteq \phi(U_+)U_1$, where $U_1=U\cap \phi(U)$, we have 
$$s(\phi,U)=[\phi(U):U_1]\geq [\phi(U_+)U_1:U_1].$$ 
Moreover, 
$$[\phi(U_+)U_1:U_1]=[\phi(U_+):U_1\cap\phi(U_+)]=[\phi(U_+):U_+].$$ 
This proves that $s(\phi,U)\geq[\phi(U_+):U_+]$.

If $U$ is tidy above for $\phi$, then $\phi(U)=\phi(U_+)U_1$ by Lemma \ref{taeq}, and hence we have the equality $s(\phi,U)= [\phi (U_+):U_+]$.

From what we have just proved it follows that $s(\phi)\leq \min\{[\phi(U_+):U_+]:U\in\mathcal B(G)\}$. Equality holds, since Theorem \ref{TP} yields that $s(\phi)=s(\phi,V)=[\phi(V_+):V_+]$ for some $V\in\mathcal M(G,\phi)$.
\end{proof}

Another consequence of Lemma \ref{taeq} is the following result. \NB It was proved in \cite[Lemma 1]{Willis} in the case of inner automorphisms.

\begin{corollary}\label{esiste n}
Let $G$ be a totally disconnected locally compact group, $\phi\colon G\to G$ a topological automorphism and $U\in\mathcal{B}(G)$. 
There exists an integer $n\geq 0$ such that $U_n$ is tidy above for $\phi$. In particular, the subgroups tidy above for $\phi$ form a local base at $e_G$.
\end{corollary}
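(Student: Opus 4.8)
The plan is to carry out the standard ``tidying above'' argument, controlling everything through the descending chain of compact open subgroups $U=U_0\supseteq U_1\supseteq U_2\supseteq\cdots$, and to show that $U_n$ is tidy above for $\phi$ once $n$ is large. First I would record two bookkeeping identities: $U_n\cap\phi(U_n)=U_{n+1}$, so that $s(\phi,U_n)=[\phi(U_n):U_{n+1}]$ is a finite positive integer, and $(U_n)_+=\bigcap_{m\ge0}\phi^m(U_n)=U_+$ for every $n$ (using that $\phi$ is bijective). Since also $U_n\subseteq U$, once the first assertion is proved the ``local base'' assertion follows at once, because by van Dantzig's theorem every neighbourhood of $e_G$ contains a member of $\mathcal B(G)$.

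Next I would prove that $\{s(\phi,U_n)\}_{n\ge0}$ is non-increasing. For each $n$, the assignment $xU_{n+2}\mapsto xU_{n+1}$ is a well-defined injection from the left cosets of $U_{n+2}$ in $\phi(U_{n+1})$ into the left cosets of $U_{n+1}$ in $\phi(U_n)$; injectivity comes from the identity $\phi(U_{n+1})\cap U_{n+1}=U_{n+2}$. Hence $s(\phi,U_{n+1})\le s(\phi,U_n)$, and being a non-increasing sequence of positive integers it is constant from some index $N$ on. For $n\ge N$ the injection above then maps between finite sets of equal cardinality, so it is onto, and reading off surjectivity gives the set identity $\phi(U_n)=\phi(U_{n+1})U_{n+1}$. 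Iterating this relation downward starting from $n=N$, and absorbing each factor $U_{N+m+1}$ into the larger subgroup $U_{N+1}$, I obtain $\phi(U_N)=\phi(U_{N+m})U_{N+1}$ for every $m\ge1$.

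Finally I would pass to the limit $m\to\infty$. Since $\phi$ is bijective, $\bigcap_{m\ge1}\phi(U_{N+m})=\phi\bigl(\bigcap_{m\ge1}U_{N+m}\bigr)=\phi(U_+)$, and I would combine this with a compactness argument: fixing $g\in\phi(U_N)$, the sets $\phi(U_{N+m})\cap gU_{N+1}$ are non-empty (by the identity just obtained), closed, decreasing in $m$, and contained in the compact group $\phi(U_{N+1})$, so they have a common point, which exhibits $g$ as an element of $\phi(U_+)U_{N+1}$. Thus $\phi(U_N)=\phi(U_+)U_{N+1}$; since $(U_N)_+=U_+$ and $U_N\cap\phi(U_N)=U_{N+1}$, this is precisely condition (b) of Lemma~\ref{taeq} applied to $U_N$, so $U_N$ is tidy above for $\phi$, which completes both assertions.

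I expect the genuinely delicate point to be this last passage to the limit: knowing merely that $s(\phi,U_n)$ eventually stabilizes does not by itself identify the stable value with $[\phi(U_+):U_+]$ --- which, by Corollary~\ref{s=min}, is exactly what being tidy above requires --- so one is forced to use stabilization in the stronger, purely set-theoretic form $\phi(U_n)=\phi(U_{n+1})U_{n+1}$ and then an inverse-limit/compactness argument to bring $\phi(U_+)$ into the picture. The remaining ingredients are routine manipulations with the subgroups $\phi^k(U)$ and their finite intersections.
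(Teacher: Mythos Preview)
Your proof is correct, but it takes a different route from the paper's. The paper dispenses with the index-stabilization step entirely: since $\{\phi(U_n)\}_{n\ge0}$ is a decreasing family of compact sets with intersection $\phi(U_+)$, and $\phi(U_+)U$ is an open neighbourhood of $\phi(U_+)$, compactness alone gives an $n$ with $\phi(U_n)\subseteq\phi(U_+)U$; then a single application of the modular-type Claim~\ref{claimbase} (with $A=U$, $B=\phi(U_n)$, $C=\phi(U_+)$) yields $\phi(U_n)=\phi(U_+)U_{n+1}$, which is exactly condition~(b) of Lemma~\ref{taeq} for $U_n$. Your approach instead first proves the monotonicity $s(\phi,U_{n+1})\le s(\phi,U_n)$, extracts the set-theoretic identity $\phi(U_n)=\phi(U_{n+1})U_{n+1}$ at the stabilization index, iterates, and only then invokes compactness (in finite-intersection-property form) to bring $\phi(U_+)$ into the picture. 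The paper's argument is shorter and uses compactness once, in a clean ``squeeze'' form; yours is more hands-on and yields as a by-product that the sequence $s(\phi,U_n)$ is non-increasing---hence, combined with Corollary~\ref{s=min}, eventually equal to $[\phi(U_+):U_+]$---which is closer in spirit to Willis's original tidying procedure.
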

\begin{proof}
Consider the subfamily $\{\phi(U_n)\}_{n\geq 0}$ of $\mathcal B(G)$, and note that $\phi(U_n)\supseteq \phi(U_{n+1})$ for every $n\geq0$, and $\phi(U_+)=\bigcap_{n\in\N}\phi(U_n)$. Consider the set $\phi(U_+)U$, which is a compact and open neighborhood of $\phi(U_+)$. There exists an integer $n\geq 0$ such that $\phi(U_n)\subseteq \phi(U_+)U$. Apply now Claim \ref{claimbase} to $U$, $\phi(U_n)$ and $\phi(U_+)$ to obtain 
$$\phi(U_n)=\phi(U_+)(U\cap \phi(U_n))=\phi(U_+)U_{n+1}.$$
Since $U_+=(U_n)_+$ and $U_{n+1}=U_n\cap \phi(U_n)$ we have that
$$\phi(U_n)=\phi((U_n)_+)(U_n\cap \phi(U_n)).$$
In view of Lemma \ref{taeq} this means that $U_n$ is tidy above for $\phi$.
\end{proof}

\bigskip
In the second part of this section we recall the properties of the nub (that is the intersection of all tidy subgroups, see \eqref{DefNUB}), starting with the following useful characterization of this remarkable subgroup.

\begin{fact}\label{nub_erg}\emph{\cite[Corollary 4.7]{Willis4}}
Let $G$ be a totally disconnected locally compact group and $\phi:G\to G$ a topological automorphism.
Then $\nub(\phi)$ is the largest $\phi$-stable compact subgroup of $G$ having no proper $\phi$-stable relatively open subgroups.
\end{fact}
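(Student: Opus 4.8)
The plan is to establish that $\nub(\phi)$, defined as the intersection of all tidy subgroups, coincides with the largest $\phi$-stable compact subgroup $N$ with no proper $\phi$-stable relatively open subgroup. First I would verify that such a largest subgroup $N$ exists: the family of $\phi$-stable compact subgroups without proper $\phi$-stable relatively open subgroups is nonempty (it contains $\{e_G\}$), and I would show it is closed under taking the closed subgroup generated by any two of its members — or, more cleanly, show that $N$ can be constructed directly as an intersection and then argue maximality. Next I would show $\nub(\phi)$ is $\phi$-stable: since $\phi$ permutes the family $\mathcal M(G,\phi)$ (if $U$ is tidy for $\phi$ then so is $\phi(U)$, because tidiness is expressed through the subgroups $U_+$, $U_-$, $U_{++}$ which transform equivariantly under $\phi$), the intersection defining $\nub(\phi)$ is carried onto itself by $\phi$. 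Compactness of $\nub(\phi)$ is immediate as it is a closed subgroup of any compact $U \in \mathcal M(G,\phi)$.

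The heart of the argument is the two inclusions. For $\nub(\phi) \subseteq N$: I would show $\nub(\phi)$ itself has no proper $\phi$-stable relatively open subgroup, and then conclude $\nub(\phi) \subseteq N$ by maximality of $N$. Suppose $V \leq \nub(\phi)$ is $\phi$-stable and relatively open, so $V = W \cap \nub(\phi)$ for some $W \in \mathcal B(G)$; using Corollary \ref{esiste n} I may replace $W$ by some $W_n$ tidy above, and then by the tidying procedure (Theorem \ref{TP}) obtain a genuinely tidy subgroup $U$ with $U \cap \nub(\phi) \subseteq V$ — but $\nub(\phi) \subseteq U$ forces $U \cap \nub(\phi) = \nub(\phi)$, hence $V = \nub(\phi)$. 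The delicate point is ensuring the tidying steps do not enlarge the trace on $\nub(\phi)$ beyond $V$; here one uses that $\phi$-stability of $V$ means $V$ is already contained in both $W_+$ and $W_-$ relative to $\nub(\phi)$, so passing to $W_+ W_-$ and then below does not lose control. For $N \subseteq \nub(\phi)$: it suffices to show $N \subseteq U$ for every tidy $U$. Given a tidy $U$, consider $N \cap U$; this is a $\phi$-stable (since both $N$ and $U$ are — wait, $U$ need not be $\phi$-stable) — instead consider $N \cap U_+ \cap U_-$, or better, observe that $N \cap U$ is relatively open in $N$, and build from it a $\phi$-stable relatively open subgroup of $N$, which by hypothesis must be all of $N$, giving $N \subseteq U$; concretely one takes $\bigcap_{n \in \Z} \phi^n(N \cap U) = N \cap U_{\phi+} \cap (\text{something})$, which is $\phi$-stable and, by a compactness/tidiness argument, still relatively open in $N$.

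I expect the main obstacle to be the second inclusion $N \subseteq \nub(\phi)$, specifically proving that for a tidy $U$ the subgroup $N \cap U$ contains a $\phi$-stable subgroup that is still relatively open in $N$. The subgroup $N \cap \bigcap_{n\in\Z}\phi^n(U)$ is the obvious $\phi$-stable candidate, but showing it is relatively open in $N$ (equivalently, of finite index in $N \cap U$, equivalently nonempty interior) requires leveraging the tidy-below property of $U$: the index $[\phi(U_+):U_+]$ being the scale means the descending chain $\phi^{-n}(U) \cap U$ stabilizes appropriately on the compact piece, and intersecting with the compact group $N$ — on which $s(\phi\restriction_N) = 1$ since $N$ is $\phi$-stable and compact — should force stabilization. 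This is essentially where Willis's original argument invokes the structure theory of tidy subgroups; I would follow that route, citing the tidy-subgroup machinery recalled above (Lemma \ref{taeq}, Corollary \ref{s=min}, Corollary \ref{esiste n}) rather than reproving it. The remaining maximality and equivariance claims are routine.
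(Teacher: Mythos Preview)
The paper does not supply its own proof of this statement: Fact~\ref{nub_erg} is quoted verbatim as \cite[Corollary 4.7]{Willis4} and is used as a black box thereafter. There is therefore no proof in the paper to compare your proposal against.

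That said, a comment on your outline is in order. The overall architecture --- show $\nub(\phi)$ is compact and $\phi$-stable, show it has no proper $\phi$-stable relatively open subgroup, then show any $N$ with those properties sits inside every tidy $U$ --- is the natural one, and the first two items are essentially as you describe. The genuine difficulty is exactly where you flag it: for the inclusion $N\subseteq\nub(\phi)$ you need, for each tidy $U$, that the $\phi$-stable subgroup $N\cap\bigcap_{n\in\Z}\phi^n(U)=N\cap U_+\cap U_-$ is still relatively open in $N$. Knowing that each $N\cap\phi^n(U)$ is open in $N$ is not enough, since an infinite intersection of open finite-index subgroups of a compact group need not be open; and the observation that $s(\phi\restriction_N)=1$ only gives you \emph{some} $\phi$-invariant open subgroup of $N$, not one contained in $N\cap U$. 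Closing this gap is precisely what requires the structure theory of tidy subgroups developed in \cite{Willis4} (in particular the analysis of $U_{++}$, $U_{--}$ and the ``tidy below'' condition beyond what is recalled here), so your instinct to defer to that machinery is correct --- but you should be aware that the tools actually recalled in the present paper (Lemma~\ref{taeq}, Corollaries~\ref{s=min} and~\ref{esiste n}) do not by themselves suffice.
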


This fact implies that when the nub is finite, then it is trivial. Indeed, if $\nub(\phi)$ is finite then $\{e_G\}$ is open in $\nub(\phi)$, consequently $\{e_G\}$ is a $\phi$-stable relatively open subgroup of $\nub(\phi)$, and thus $\nub(\phi)=\{e_G\}$ by Fact \ref{nub_erg}. 

Moreover, it is worth to observe that always $s(\phi\restriction_{\nub(\phi)})=1$ as $\nub(\phi)$ is compact.

\medskip
We know that the family $\mathcal{B}(G)$ of compact open subgroups of $G$ is a local base at $e_G$ and that every $U\in\mathcal{B}(G)$ contains a compact open subgroup that is tidy above for $\phi$ by Corollary \ref{esiste n}. Moreover, \cite[Corollary 4.3]{Willis4} asserts that a subgroup $U\in\mathcal B(G)$ is tidy below for $\phi$ if and only if $\nub(\phi)\subseteq U$ (see also \cite[Lemma 3.31]{BW}\NB). So we have the following result, where (b) can be deduced from (a) via Lemma \ref{invariantsubgroup}, and \NB (a) is essentially contained in \cite[Theorem 3.32]{BW}, where several conditions equivalent to $\nub(\phi)=\{e_G\}$ are given. 

\begin{corollary}\label{tidylocalbase}
Let $G$ be a totally disconnected locally compact group and $\phi:G\to G$ a topological automorphism. Then: 
\begin{itemize}
\item[(a)] $\mathcal M(G,\phi)$ is a local base at $e_G$ if and only if $\nub(\phi)=\{e_G\}$;
\item[(b)] if $s(\phi)=1$, then $\nub(\phi) = \{e_G\}$ if and only if $G$ has a local base at $e_G$ consisting of $\phi$-invariant compact open subgroups.
\end{itemize} 
\end{corollary}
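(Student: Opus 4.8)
\textbf{Proof plan for Corollary \ref{tidylocalbase}.}

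The plan is to prove (a) directly from the definition of the nub together with Corollary \ref{esiste n}, and then to derive (b) from (a) using Lemma \ref{invariantsubgroup}. For (a), the ``only if'' direction is immediate: if $\mathcal M(G,\phi)$ is a local base at $e_G$, then $\bigcap\mathcal M(G,\phi)=\{e_G\}$ since $G$ is Hausdorff, and by \eqref{DefNUB} this intersection is exactly $\nub(\phi)$. For the ``if'' direction, assume $\nub(\phi)=\{e_G\}$; I must show that every $V\in\mathcal B(G)$ contains some $U\in\mathcal M(G,\phi)$. Fix $V\in\mathcal B(G)$. By Corollary \ref{esiste n} there is an integer $n\geq 0$ with $W:=V_n\in\mathcal B(G)$ tidy above for $\phi$ and $W\subseteq V$. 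It remains to shrink $W$ further (or modify it) so that it becomes tidy below as well, without leaving $V$. Here is where I invoke the cited result \cite[Corollary 4.3]{Willis4}: a subgroup $U\in\mathcal B(G)$ is tidy below for $\phi$ if and only if $\nub(\phi)\subseteq U$; since $\nub(\phi)=\{e_G\}\subseteq W$ trivially, $W$ is automatically tidy below, hence tidy, hence $W\in\mathcal M(G,\phi)$ by Theorem \ref{TP}. Thus every neighborhood $V$ of $e_G$ contains a member of $\mathcal M(G,\phi)$, i.e. $\mathcal M(G,\phi)$ is a local base at $e_G$.

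For (b), assume $s(\phi)=1$. By Lemma \ref{invariantsubgroup}(d), $\mathcal M(G,\phi)=\{U\in\mathcal B(G):U\ \phi\text{-invariant}\}$. Hence $\mathcal M(G,\phi)$ being a local base at $e_G$ is literally the same as $G$ having a local base at $e_G$ consisting of $\phi$-invariant compact open subgroups. Combining this with part (a) gives the equivalence: $\nub(\phi)=\{e_G\}$ iff $\mathcal M(G,\phi)$ is a local base at $e_G$ iff $G$ has a local base at $e_G$ of $\phi$-invariant compact open subgroups.

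I do not expect a serious obstacle here, since the substantive content is entirely outsourced to the quoted facts. The one point requiring a little care is the ``if'' direction of (a): the tempting but insufficient move is to quote only Corollary \ref{esiste n}, which produces subgroups tidy \emph{above} but says nothing about tidy \emph{below}; the role of the hypothesis $\nub(\phi)=\{e_G\}$ is precisely to upgrade ``tidy above'' to ``tidy'' via \cite[Corollary 4.3]{Willis4}. One should also note explicitly that the intersection defining $\nub(\phi)$ collapses to $\{e_G\}$ exactly when the members of $\mathcal M(G,\phi)$ get arbitrarily small, using Hausdorffness of $G$ and the fact (from Theorem \ref{TP}) that $\mathcal M(G,\phi)$ consists of the tidy subgroups appearing in \eqref{DefNUB}.
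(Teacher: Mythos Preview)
Your proposal is correct and follows exactly the approach the paper sketches in the paragraph preceding the corollary: the ``only if'' direction of (a) is Hausdorffness, the ``if'' direction combines Corollary~\ref{esiste n} with \cite[Corollary 4.3]{Willis4} to upgrade tidy above to tidy, and (b) is reduced to (a) via Lemma~\ref{invariantsubgroup}(d). There is nothing to add.
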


If $s(\phi)=1$, we have $U\in\mathcal M(G,\phi)$ precisely when $U$ is $\phi$-invariant by Lemma \ref{invariantsubgroup}, so in this case
\begin{equation}
\nub(\phi)=\bigcap\{U\in\mathcal{B}(G): U\text{ is }\phi\text{-invariant}\}. \label{dagdag}
\end{equation}
Now \eqref{dagdag} allows us to extend the definition of the nub also to arbitrary continuous endomorphisms of totally disconnected compact groups (note that $[\phi(U):U\cap \phi(U)]$ is finite for every $U\in\mathcal B(G)$). 

\medskip
Let us see some examples of computation of the nub.

\begin{example}\label{June13}
\begin{itemize}
    \item[(a)] Let $G$ be a totally disconnected locally compact group and $\phi:G\to G$ a topological automorphism. If $\phi$ is periodic (i.e., $\phi^m=id_G$ for some integer $m>0$), then $G$ has a base of $\phi$-invariant compact open subgroups, so $\nub(\phi) = \{e_G\}$ as noted after Corollary \ref{tidylocalbase}. 
   \item[(b)] If $G= \prod_p N_p$, where $p$ is a prime and each $N_p$ is a finitely generated $\J_p$-module, then $G$ has a base of fully invariant compact open subgroups (namely, $\{mG:m\in \N_+\}$), so $\nub(\phi) = \{e_G\}$ for every continuous endomorphism of $G$.
   \item[(c)] Let $G = F^\Z$, where $F$ is an arbitrary finite group. Then $\nub(\sigma)= G$, where $\sigma: G \to G$ is the left Bernoulli shift, (see Fact \ref{nub_erg}).
   \item[(d)] Let $G$ be a totally disconnected compact (i.e., profinite) abelian group. Then for every continuous endomorphism
$\phi: G \to G$ one can completely describe $\nub(\phi)$ by using the dual endomorphism $\widehat{\phi}: \widehat{G} \to \widehat{G}$.
Indeed, $$\nub(\phi) = t_{\widehat{\phi}}(\widehat{G})^\bot,$$ where $ t_{\widehat{\phi}}(\widehat{G})$ is the sum of all finite 
$\widehat{\phi}$-invariant subgroups of the discrete torsion abelian group $\widehat{G}$; in terms of \cite{DG}, $t_{\widehat{\phi}}(\widehat{G})$ is the Pinsker subgroup of $\widehat{\phi}$, defined as the largest $\widehat{\phi}$-invariant subgroup of $\widehat{G}$ where the restriction of $\widehat{\phi}$ has algebraic entropy zero.

According to \cite{DG}, $\nub(\phi) $ is the largest $\phi$-invariant closed subgroup of $G$ where the restriction of $\phi$ acts ergodically, or, equivalently, has strongly positive topological entropy; this means that the induced endomorphism $\overline \phi : G/\nub(\phi)\to G/ \nub(\phi)$ is the Pinsker factor of $\phi$, that is $h_{top}(\overline \phi)=0$ and this is the largest factor with this property (see \cite{DG} for more details).

\item[(e)] As noted in \cite{Willis4}, the dynamical property of the subgroup $ \nub(\phi)$  from item (d) remains true in the non-abelian case too. Namely, $\phi$ acts transitively on $ \nub(\phi)$, and $\nub(\phi)$ is the largest closed $\phi$-invariant subgroup of $G$ where $\phi$ acts ergodically.

\item[(f)] For any integer $n>0$ and every topological automorphism $\phi:\Q_p^n\to \Q_p^n$, $\nub(\phi)$ is trivial. Indeed, being a compact subgroup of $\Q_p^n$, $\nub(\phi) \cong \J_p^m$ for some $0 \leq m \leq n$. By (b) we can conclude that $\nub(\phi)$ has plenty of proper $\phi$-stable open subgroups. According to Fact \ref{nub_erg}, this implies $ m=0$.
\end{itemize}
\end{example}

\section{The scale function and the topological entropy}\label{comp}

It follows from Corollary \ref{s=min} that
$$\log s(\phi)=\min\{\log [\phi(U_+):U_+]:U\in\mathcal B(G)\}.$$
Furthermore, Theorem \ref{lf} yields
$$h_{top}(\phi)=\sup\{\log[\phi(U_+):U_+]:U\in\mathcal B(G)\}.$$
This gives the inequality announced in \eqref{logs<htop}:

\begin{theorem}\label{logs<htop-th}
Let $G$ be a totally disconnected locally compact group and $\phi:G\to G$ a topological automorphism. Then 
\begin{equation}\label{geq}
\log s(\phi)\leq h_{top}(\phi).
\end{equation}
\end{theorem}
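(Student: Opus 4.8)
The proof is essentially immediate from the two identities collected just before the statement, so the plan is simply to assemble them. First I would recall from Corollary \ref{s=min} that
$$\log s(\phi)=\min\{\log[\phi(U_+):U_+]:U\in\mathcal B(G)\},$$
and from Theorem \ref{lf} (the limit-free formula for topological entropy) that $H_{top}(\phi,U)=\log[\phi(U_+):U_+]$ for every $U\in\mathcal B(G)$, whence
$$h_{top}(\phi)=\sup\{H_{top}(\phi,U):U\in\mathcal B(G)\}=\sup\{\log[\phi(U_+):U_+]:U\in\mathcal B(G)\}.$$

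The key observation is that both quantities are the extremal values (the minimum, respectively the supremum) of one and the same subset $S=\{\log[\phi(U_+):U_+]:U\in\mathcal B(G)\}$ of $\log\N_+$. Since $S$ is non-empty — $\mathcal B(G)$ is non-empty by van Dantzig's theorem, and each index $[\phi(U_+):U_+]$ is finite and positive as noted after Theorem \ref{TP} — the minimum of $S$ (which exists because $s(\phi)$ is attained on a tidy subgroup, by Theorem \ref{TP} together with Corollary \ref{s=min}) is at most the supremum of $S$. That is exactly the inequality $\log s(\phi)\le h_{top}(\phi)$.

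There is no real obstacle here: the entire content has been front-loaded into Corollary \ref{s=min} (which packages the tidying procedure) and Theorem \ref{lf} (the limit-free entropy formula). The only point deserving a word of care is that the minimum defining $s(\phi)$ is genuinely attained, so that comparing $\min S$ with $\sup S$ makes sense even when $\sup S=+\infty$; this is guaranteed by the existence of tidy subgroups. I would therefore keep the proof to two lines, quoting the displayed formulas above and concluding $\log s(\phi)=\min S\le\sup S=h_{top}(\phi)$.
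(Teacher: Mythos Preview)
Your proposal is correct and follows exactly the paper's own argument: the paper also derives the inequality immediately from the two displayed identities (Corollary~\ref{s=min} and Theorem~\ref{lf}), observing that $\log s(\phi)$ and $h_{top}(\phi)$ are the minimum and the supremum of the same set $\{\log[\phi(U_+):U_+]:U\in\mathcal B(G)\}$. There is nothing to add.
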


We observe immediately that if $G$ is compact, then the inequality in Theorem \ref{logs<htop-th} can be strict in a trivial way. 
Indeed, $G$ compact implies $\log s(\phi)=0$, while the topological entropy $h_{top}(\phi)$ can be positive, as in the next example.

\begin{example}\label{beta}
For a prime $p$ let $G=\Z(p)^\Z$ and $\sigma:G\to G$ the left Bernoulli shift. Then $h_{top}(\sigma)=\log p$ (see \cite{AKM,St}); on the other hand, we have seen that $s(\sigma)=1$ and $\nub(\sigma)=G$ in Example \ref{June13}(c).
\end{example}

The inequality in Theorem \ref{logs<htop-th} can be obtained also in different way based on an equivalent definition of the scale function, as explained in the next remark.

\begin{remark}
For $G$ a totally disconnected locally compact group and $\phi:G\to G$ a topological automorphism, it was proved in \cite[Theorem 7.7]{M} that, for any $U\in\mathcal B(G)$,
$$\log s(\phi)=\lim_{n\to\infty}\frac{\log[\phi^n(U):U\cap \phi^n(U)]}{n}.$$
This gives immediately that $\log s(\phi)\leq h_{top}(\phi)$, because $[\phi^n(U):U\cap\phi^n(U)]=[U:\phi^{-n}(U)\cap U]$ as $\phi$ is an automorphism, and $[U:\phi^{-n}(U)\cap U]\leq [U:U_{-n}]$ as $U_{-n}\subseteq\phi^{-n}(U)\cap U$.
\end{remark}

Let $G$ be a totally disconnected locally compact group and $\phi\colon G\to G$ a continuous endomorphism.
Since $H_{top}(\phi,-)$ is antimonotone, that is,
\begin{center}
if $U,V\in\mathcal B(G)$ and $U\subseteq V$, then $H_{top}(\phi,V)\leq H_{top}(\phi,U)$, 
\end{center}
by the definition, it is clear that to compute the topological entropy $h_{top}(\phi)$ it suffices to take the supremum of $H_{top}(\phi,U)$ when $U$ ranges in a local base at $e_G$ of $G$:

\begin{claim}\label{basesuff}
Let $G$ be a totally disconnected locally compact group, $\phi:G\to G$ a continuous endomorphism and $\mathcal B\subseteq \mathcal B(G)$ a local base at $e_G$. Then $h_{top}(\phi)=\sup\{H_{top}(\phi,U):U\in\mathcal B\}$.
\end{claim}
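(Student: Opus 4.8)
The approach is to establish the two inequalities between $h_{top}(\phi)$ and $\sup\{H_{top}(\phi,U):U\in\mathcal B\}$ separately. One of them is immediate: since $\mathcal B\subseteq\mathcal B(G)$, the supremum over $\mathcal B$ cannot exceed the supremum over $\mathcal B(G)$, so $\sup\{H_{top}(\phi,U):U\in\mathcal B\}\le h_{top}(\phi)$. The content is the reverse inequality, and it is a cofinality argument: $\mathcal B$ is cofinal \emph{from below} (in the inclusion order) inside $\mathcal B(G)$, while $H_{top}(\phi,-)$ reverses that order.

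Concretely, I would fix an arbitrary $V\in\mathcal B(G)$. Being an open neighbourhood of $e_G$, and $\mathcal B$ a local base at $e_G$, there is some $U\in\mathcal B$ with $U\subseteq V$. By the antimonotonicity of $H_{top}(\phi,-)$ recorded just above the statement, $H_{top}(\phi,V)\le H_{top}(\phi,U)\le\sup\{H_{top}(\phi,W):W\in\mathcal B\}$. Since $V\in\mathcal B(G)$ was arbitrary, taking the supremum over $V$ gives $h_{top}(\phi)\le\sup\{H_{top}(\phi,W):W\in\mathcal B\}$, and together with the previous paragraph this yields the desired equality.

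The only point deserving care is the antimonotonicity itself, should one wish to include its short justification: for $U\subseteq V$ in $\mathcal B(G)$, the (monotone) operations $\phi^{k}$ together with finite intersections give $U_{-n}\subseteq V_{-n}$, and also $U_{-n}\subseteq U\subseteq V$, whence $[V:V_{-n}]\le[V:U_{-n}]=[V:U]\,[U:U_{-n}]$ with $[V:U]$ finite because $U$ is open in the compact group $V$; dividing by $n$, taking logarithms and letting $n\to\infty$ makes the constant term $\log[V:U]$ vanish and leaves $H_{top}(\phi,V)\le H_{top}(\phi,U)$. Thus the proof has no real obstacle — it is a cofinality observation, the single piece of bookkeeping being that the extra index $[V:U]$ is finite and disappears in the limit.
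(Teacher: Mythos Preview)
Your proof is correct and follows exactly the approach the paper intends: the paper states the claim as an immediate consequence of the antimonotonicity of $H_{top}(\phi,-)$ (recorded just before the claim) together with the fact that a local base is cofinal from below in $\mathcal B(G)$, and your argument spells out precisely this. Your additional justification of the antimonotonicity via the finite index $[V:U]$ is a welcome elaboration of a step the paper leaves implicit.
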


Applying this claim on topological entropy, as well as Theorem \ref{TP} and Theorem \ref{lf}, in the following proposition we give a sufficient condition to have equality in \eqref{geq}. 

\begin{proposition}\label{nubb}
Let $G$ be a totally disconnected locally compact group and $\phi:G\to G$ a topological automorphism. If $\nub(\phi)=\{e_G\}$ then $\log s(\phi)=h_{top}(\phi)$.
\end{proposition}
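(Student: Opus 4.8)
The plan is to use the hypothesis $\nub(\phi)=\{e_G\}$ exactly where it is needed, namely to guarantee that the minimizing (equivalently, tidy) subgroups form a local base at $e_G$, and then to observe that $H_{top}(\phi,-)$ is constantly equal to $\log s(\phi)$ on this local base. So the first step is to invoke Corollary \ref{tidylocalbase}(a): since $\nub(\phi)=\{e_G\}$, the family $\mathcal M(G,\phi)$ of compact open subgroups tidy for $\phi$ is a local base at $e_G$.

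Next I would fix an arbitrary $U\in\mathcal M(G,\phi)$. By Theorem \ref{TP} (or, equivalently, by Corollary \ref{s=min}) we have $s(\phi)=s(\phi,U)=[\phi(U_+):U_+]$, while the limit-free formula of Theorem \ref{lf} gives $H_{top}(\phi,U)=\log[\phi(U_+):U_+]$. Combining these two identities yields $H_{top}(\phi,U)=\log s(\phi)$ for every $U\in\mathcal M(G,\phi)$; in other words, the set $\{H_{top}(\phi,U):U\in\mathcal M(G,\phi)\}$ is the singleton $\{\log s(\phi)\}$. It then remains to apply Claim \ref{basesuff} with $\mathcal B=\mathcal M(G,\phi)$: as this is a local base at $e_G$, we get $h_{top}(\phi)=\sup\{H_{top}(\phi,U):U\in\mathcal M(G,\phi)\}=\log s(\phi)$, which is the asserted equality (and it of course refines the general inequality of Theorem \ref{logs<htop-th}).

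As for the main obstacle: there is essentially no computational difficulty in this direction — the argument is a short assembly of facts already in hand. The genuine content has been packed into the cited results, above all Corollary \ref{tidylocalbase}(a), which itself rests on Willis's structural analysis of the nub, together with the limit-free formula of Theorem \ref{lf}. The truly hard part is the \emph{converse} implication, that $\log s(\phi)=h_{top}(\phi)$ forces $\nub(\phi)=\{e_G\}$; that is not touched by this argument and is exactly what is left open here (it becomes Theorem \ref{nubbanale}, proved later via the contributions of Baumgartner and Spiga).
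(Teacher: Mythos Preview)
Your proof is correct and is essentially identical to the paper's own argument: both invoke Corollary \ref{tidylocalbase}(a) to get that $\mathcal M(G,\phi)$ is a local base, combine Theorem \ref{TP} with Theorem \ref{lf} to see that $H_{top}(\phi,U)=\log s(\phi)$ on every tidy $U$, and then conclude via Claim \ref{basesuff}. The only cosmetic difference is the order in which you state the two ingredients.
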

\begin{proof}
Suppose that $\nub(\phi)=\{e_G\}$, then for every $U\in\mathcal B(G)$ tidy for $\phi$ we have $\log s(\phi)=\log[\phi(U_+):U_+]$ by Theorem \ref{TP} and so $H_{top}(\phi,U)=\log s(\phi)$ by Theorem \ref{lf}. 
We are assuming that $\nub(\phi)=\{e_G\}$, so the tidy subgroups form a local base at $e_G$ by Corollary \ref{tidylocalbase}. Hence Claim \ref{basesuff} permits to conclude that $\log s(\phi)=h_{top}(\phi)$.
\end{proof}

In particular, Proposition \ref{nubb} says that, if the inequality $\log s(\phi)\leq h_{top}(\phi)$ in \eqref{geq} is strict, then $\nub(\phi)\neq\{e_G\}$. As already mentioned, this is the case of topological automorphisms $\phi$ of totally disconnected compact groups with positive topological entropy; indeed, $\log s(\phi)=0$, while $h_{top}(\phi)>0$. So we have the following consequence of Proposition \ref{nubb} on topological entropy.

\begin{corollary}
Let $G$ be a totally disconnected compact group and $\phi:G\to G$ a topological automorphism. If $h_{top}(\phi)>0$ then $\nub(\phi)\neq\{e_G\}$.
\end{corollary}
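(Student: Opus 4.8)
The plan is to obtain the corollary as a one-line contrapositive of Proposition \ref{nubb}, once one records the standing observation about compact groups. First I would recall the remark made immediately after the definition of the scale function: if $G$ is compact (or discrete), then $s(\phi)=1$ for every topological automorphism $\phi$ of $G$, hence $\log s(\phi)=0$. This is the only place where the compactness of $G$ — rather than mere total disconnectedness and local compactness — enters; it is what makes the inequality $\log s(\phi)\le h_{top}(\phi)$ of Theorem \ref{logs<htop-th} degenerate to $0\le h_{top}(\phi)$.

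Assuming now $\nub(\phi)=\{e_G\}$, Proposition \ref{nubb} applies and yields $h_{top}(\phi)=\log s(\phi)=0$. Taking the contrapositive of the implication ``$\nub(\phi)=\{e_G\}\Rightarrow h_{top}(\phi)=0$'' gives exactly: if $h_{top}(\phi)>0$, then $\nub(\phi)\neq\{e_G\}$. That is the whole argument; no genuine obstacle is expected, the content being entirely absorbed into Proposition \ref{nubb}.

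If a self-contained argument not invoking the general Proposition \ref{nubb} is preferred, one can reason directly. By Lemma \ref{invariantsubgroup}(d), $s(\phi)=1$ means that $\mathcal M(G,\phi)$ consists precisely of the $\phi$-invariant members of $\mathcal B(G)$; hence the hypothesis $\nub(\phi)=\{e_G\}$ together with Corollary \ref{tidylocalbase} provides a local base at $e_G$ made of $\phi$-invariant $U\in\mathcal B(G)$. Each such $U$ has finite index in the compact group $G$, so $\phi(U)\subseteq U$ and $[G:\phi(U)]=[G:U]<\infty$ force $\phi(U)=U$; consequently $U_+=U$ and, by Theorem \ref{lf}, $H_{top}(\phi,U)=\log[\phi(U_+):U_+]=0$. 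Since these $U$ form a local base, Claim \ref{basesuff} gives $h_{top}(\phi)=0$, and the contrapositive again concludes the proof. The only subtlety worth underlining in this route is that compactness of $G$ is used twice: to get $s(\phi)=1$ and to force the $\phi$-invariant compact open subgroups to be $\phi$-stable via finiteness of their index.
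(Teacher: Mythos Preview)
Your primary argument is correct and is exactly the paper's own: the corollary is stated as an immediate consequence of Proposition \ref{nubb} together with the observation that $\log s(\phi)=0$ for compact $G$, then conclude by contraposition. Your alternative self-contained route via Corollary \ref{tidylocalbase}(b), Theorem \ref{lf} and Claim \ref{basesuff} is also correct, but it simply unwinds the ingredients of Proposition \ref{nubb} in this special case and is not needed here.
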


Another consequence of Proposition \ref{nubb} on topological entropy concerns its values. Indeed, the scale function assumes only finite values as noted above, while the topological entropy can be infinite, being defined as a supremum. We see now that when the nub is trivial, the topological entropy has only finite values.

\begin{corollary}
Let $G$ be a totally disconnected locally compact group and $\phi:G\to G$ a topological automorphism. If $\nub(\phi)=\{e_G\}$, then $h_{top}(\phi)$ is finite. 

Moreover, $h_{top}(\phi)=H_{top}(\phi,U)=\log[\phi(U_+):U_+]$ for every $U\in\mathcal B(G)$ tidy for $\phi$.
\end{corollary}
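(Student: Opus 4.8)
The plan is to deduce the statement directly from Proposition \ref{nubb}, together with Theorems \ref{TP} and \ref{lf}; no new argument is really needed.

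First I would invoke Proposition \ref{nubb}: since $\nub(\phi)=\{e_G\}$ by hypothesis, we obtain $\log s(\phi)=h_{top}(\phi)$. Now recall that $s(\phi)$ is always a positive integer (as observed right after the definition of the scale, since every index $s_G(\phi,U)$ is finite), so in particular $\log s(\phi)<\infty$; hence $h_{top}(\phi)=\log s(\phi)$ is finite. This settles the first assertion.

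For the ``moreover'' part, fix any $U\in\mathcal B(G)$ tidy for $\phi$. By Theorem \ref{TP}, such a $U$ is minimizing and $s(\phi)=[\phi(U_+):U_+]$; by Theorem \ref{lf}, $H_{top}(\phi,U)=\log[\phi(U_+):U_+]$. Combining these two equalities with $\log s(\phi)=h_{top}(\phi)$ from Proposition \ref{nubb} yields
$$H_{top}(\phi,U)=\log[\phi(U_+):U_+]=\log s(\phi)=h_{top}(\phi),$$
as claimed. It is worth noting that this is a genuine refinement of the definition of $h_{top}(\phi)$ as a supremum: in general one only has $H_{top}(\phi,U)\le h_{top}(\phi)$, and here we are asserting that this supremum is actually attained, and indeed attained at every tidy subgroup.

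There is essentially no obstacle in this proof: the entire content has been absorbed into Proposition \ref{nubb} (which itself rests on Corollary \ref{tidylocalbase} and Claim \ref{basesuff}) and into the limit-free formula of Theorem \ref{lf}. The only point I would be careful to state precisely is the one just mentioned — that in $h_{top}(\phi)=H_{top}(\phi,U)$ the left-hand side is a supremum while $U$ on the right is fixed — but this is immediate once one knows that $H_{top}(\phi,U)=\log s(\phi)$ for all tidy $U$ and that $h_{top}(\phi)=\log s(\phi)$.
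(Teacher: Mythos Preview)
Your proof is correct and follows exactly the same approach as the paper's own proof, which simply says ``By Proposition \ref{nubb} we have $h_{top}(\phi)=\log s(\phi)$. Then apply Theorems \ref{TP} and \ref{lf}.'' You have merely spelled out the details that the paper leaves implicit.
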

\begin{proof}
By Proposition \ref{nubb} we have $h_{top}(\phi)=\log s(\phi)$. Then apply Theorems \ref{TP} and \ref{lf}.
\end{proof}

We give now two examples of non-compact totally disconnected locally compact groups $G$ and topological automorphisms $\phi:G\to G$ for which $\nub(\phi)=\{e_G\}$ and so $\log s(\phi)=h_{top}(\phi)$.

\begin{example}\label{esugpadica}
\begin{itemize}
\item[(a)] For any integer $n>0$ and every topological automorphism $\phi\colon \Q_p^n\to\Q_p^n$ the equality $\log s(\phi)=h_{top}(\phi)$
holds true. Indeed, we know that $\nub(\phi)$ is trivial by Example \ref{June13}(f), so we can conclude using Proposition \ref{nubb}.
\item[(b)] Let $p$ be a prime, $G = \Z(p)^\Z$ and $\sigma: G \to G$ the left Bernoulli shift. Modify the usual compact product topology of $G$ taking $U = \Z(p)^{\N}$ to be an open subgroup (equipped with its compact product topology) of $G$ in this new topology. With respect to Example \ref{beta}, the value of the topological entropy remains $h_{top}(\sigma)=\log p$.

Since $\nub(\sigma)$ is trivial, Proposition \ref{nubb} applies to give $\log s(\sigma)=h_{top}(\sigma) = \log p >0$ in this case (compare with the particular case of coincidence of $\log s(\phi)$ and $h_{top}(\phi)$ considered in Corollary \ref{s=1} below).
\end{itemize}
\end{example}

We know that $s(\phi) = 1$ if and only if there exists a $\phi$-invariant $U \in \mathcal B(G)$ by Lemma \ref{invariantsubgroup}(d). Moreover, Corollary \ref{tidylocalbase}(b) implies that if $s(\phi)=1$ and $\nub(\phi) = \{e_G\}$ then $G$ has a local base at $e_G$ consisting of $\phi$-invariant compact open subgroups. We see in the next corollary that this condition is equivalent to $h_{top}(\phi)=0$.

\begin{corollary}\label{s=1}
Let $G$ be a totally disconnected locally compact group and $\phi:G\to G$ a topological automorphism. Then the following conditions are equivalent: 
\begin{itemize}
\item[(a)] $h_{top}(\phi)=0$; 
\item[(b)] $G$ has a local base at $e_G$ formed by $\phi$-invariant $U \in \mathcal B(G)$; 
\item[(c)] $G$ has a local base at $e_G$ formed by subgroups tidy for $\phi$ and $s(\phi)=1$; 
\item[(d)] $\nub(\phi)=\{e_G\}$ and $s(\phi)=1$.
\end{itemize}
\end{corollary}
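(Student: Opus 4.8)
The plan is to prove the cycle $(a)\Rightarrow(d)\Rightarrow(c)\Rightarrow(b)\Rightarrow(a)$. Three of these implications are essentially bookkeeping with results already available: the tidying theorem (Theorem~\ref{TP}), Lemma~\ref{invariantsubgroup}, and the characterizations of triviality of the nub in Corollary~\ref{tidylocalbase}. The only implication carrying real content is $(a)\Rightarrow(d)$.

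For $(d)\Rightarrow(c)$: if $\nub(\phi)=\{e_G\}$, then $\mathcal M(G,\phi)$ is a local base at $e_G$ by Corollary~\ref{tidylocalbase}(a), and by Theorem~\ref{TP} these minimizing subgroups are exactly the tidy ones; together with $s(\phi)=1$ this gives $(c)$. For $(c)\Rightarrow(b)$: assuming $s(\phi)=1$, Lemma~\ref{invariantsubgroup}(d) identifies $\mathcal M(G,\phi)$ with the set of $\phi$-invariant members of $\mathcal B(G)$, so a local base consisting of tidy subgroups is already one consisting of $\phi$-invariant compact open subgroups. For $(b)\Rightarrow(a)$: a $\phi$-invariant $U\in\mathcal B(G)$ satisfies $\phi^{-j}(U)\supseteq U$ for all $j\ge 0$, hence $U_{-n}=U$ and $H_{top}(\phi,U)=0$ for every $n$; applying Claim~\ref{basesuff} to the given local base yields $h_{top}(\phi)=0$. (Alternatively, $(b)$ forces $s(\phi)=1$, whence $\nub(\phi)=\{e_G\}$ by Corollary~\ref{tidylocalbase}(b), and Proposition~\ref{nubb} applies.)

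It remains to prove $(a)\Rightarrow(d)$. From $0\le\log s(\phi)\le h_{top}(\phi)=0$ (Theorem~\ref{logs<htop-th}) we get $s(\phi)=1$. To obtain $\nub(\phi)=\{e_G\}$, the shortest route is Theorem~\ref{nubbanale}: since $\log s(\phi)=0=h_{top}(\phi)$, it yields triviality of the nub directly. Should one wish to avoid the necessity half of Theorem~\ref{nubbanale}, one can argue by hand: the limit-free formula (Theorem~\ref{lf}) gives $H_{top}(\phi,U)=\log[\phi(U_+):U_+]$, so $h_{top}(\phi)=0$ forces $\phi(U_+)=U_+$, i.e.\ $U_+$ is $\phi$-stable, for \emph{every} $U\in\mathcal B(G)$. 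Given $U\in\mathcal B(G)$, choose by Corollary~\ref{esiste n} an integer $n\ge 0$ with $U_n$ tidy above; since $(U_n)_+=U_+$ is $\phi$-stable, $(U_n)_{++}=\bigcup_{m\in\N}\phi^m(U_+)=U_+$ is compact, hence closed, so $U_n$ is tidy below as well, i.e.\ tidy. Thus the tidy subgroups form a local base at $e_G$, and Corollary~\ref{tidylocalbase}(a) gives $\nub(\phi)=\{e_G\}$; with $s(\phi)=1$ this is $(d)$.

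I expect $(a)\Rightarrow(d)$ — extracting triviality of the nub from vanishing topological entropy — to be the main obstacle: it is the one place where either the full strength of Theorem~\ref{nubbanale} is invoked, or, as above, one must combine the limit-free formula with the tidying-above construction of Corollary~\ref{esiste n}. The remaining implications reduce to straightforward applications of Theorem~\ref{TP}, Lemma~\ref{invariantsubgroup}, and Corollary~\ref{tidylocalbase}.
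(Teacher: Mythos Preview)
Your proof is correct, but it runs the cycle in the opposite direction from the paper: the paper proves $(a)\Rightarrow(b)\Rightarrow(c)\Rightarrow(d)\Rightarrow(a)$, placing the substantive step at $(a)\Rightarrow(b)$ and deducing $(d)\Rightarrow(a)$ from Proposition~\ref{nubb}. For $(a)\Rightarrow(b)$ the paper argues that $H_{top}(\phi,U)=0$ forces the descending chain $U_{-n}$ to stabilize at $U_-$ for some $n$ (citing \cite{DG-limitfree,DG-htop}), so $U_-\in\mathcal B(G)$ is the desired $\phi$-invariant subgroup inside $U$; the remaining implications $(b)\Rightarrow(c)\Rightarrow(d)$ are declared obvious. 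Your route instead makes $(a)\Rightarrow(d)$ the content step. Note that your first option (invoking Theorem~\ref{nubbanale}) is a forward reference in the paper's logical order, since the necessity half of Theorem~\ref{nubbanale} is proved only after Corollary~\ref{s=1}; your second option, however, is fully self-contained within the paper (Theorem~\ref{lf}, Corollary~\ref{esiste n}, Corollary~\ref{tidylocalbase}) and avoids the external citations the paper needs for $(a)\Rightarrow(b)$. Either direction works; the paper's version yields the $\phi$-invariant base more explicitly (as the subgroups $U_-$), while yours stays internal to the machinery already developed.
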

\begin{proof}
(a)$\Rightarrow$(b) By the definition of topological entropy $h_{top}(\phi)=0$ implies $H_{top}(\phi,U)=0$ for every $U\in\mathcal B(G)$. The condition $H_{top}(\phi,U)=0$ implies that there exists an integer $n\geq0$ such that $U_{-n}=U_-$; this follows from \cite[Lemma 3.1]{DG-limitfree} in the compact case (see \cite{DG-htop} for the general case). Then $U_-\in\mathcal B(G)$, $U_-\subseteq U$ and it is $\phi$-invariant. This shows that $G$ has a local base at $e_G$ formed by $\phi$-invariant $U \in \mathcal B(G)$.

Now (b)$\Rightarrow$(c)$\Rightarrow$(d) are obvious, and (d)$\Rightarrow$(a) follows from Proposition \ref{nubb}. 
\end{proof}

The hypothesis $s(\phi)=1$ of Corollary \ref{s=1}(c,d) is satisfied in obvious way when $G$ is compact. In contrast to Example \ref{beta}, now Example \ref{example} furnishes a totally disconnected locally compact group $G$ that is not compact, and a topological automorphism $\phi:G\to G$ such that $s(\phi)=1$ and $h_{top}(\phi)>0$. By Proposition \ref{nubb}, this yields that $\nub(\phi)$ is necessarily a non-trivial subgroup of $G$, and in this case $\nub(\phi)$ is also proper (compare with Examples \ref{beta} and \ref{esugpadica}(b)).

\begin{example}\label{example}
Let $p$ be a prime and $G=\Z(p^\infty)^{\Z}$. Imposing that $U=\Z(p)^\Z$ is open in $G$ (equipped with its compact product topology), then $G$ is given a totally disconnected locally compact (non-compact) topology. Consider $\sigma:G\to G$ the left Bernoulli shift;
clearly $\sigma(U)=U$, and then:
\begin{itemize}
\item[(a)] $\nub(\sigma)=U$;
\item[(b)] $s(\sigma)=1$;
\item[(c)] $H_{top}(\sigma,U)=0$;
\item[(d)] $H_{top}(\sigma,V)=\log p$, where $V=\Z(p)^{-\N_+}\oplus\{0\}\oplus\Z(p)^{\N_+}$; in fact $[\sigma(V_+):V_+]=p$ and apply Theorem \ref{lf}.
Note that $V_+=\Z(p)^{\N_+}$ and $V_-=\Z(p)^{-\N_+}$, therefore $V$ is tidy above for $\sigma$. On the other hand, $V_{++}=\Z(p)^{(-\N)}\oplus\Z(p)^{\N_+}$, which is dense in $U$ and so it is not closed; in other words $V$ is not tidy below for $\sigma$. 
\item[(e)] $h_{top}(\sigma)=\log p$, since $\{V_n:n\in\Z\}$ is a local base at $e_G$, $H_{top}(\sigma,V_n)=\log p$ as in item (d). Then apply Claim \ref{basesuff}.
\end{itemize}
\end{example}

Since the sufficiency was already proved in Proposition \ref{nubb}, it is enough to prove the necessity in order to complete the proof of Theorem \ref{nubbanale}.

\smallskip 

\begin{proof}[\bf First proof of Theorem \ref{nubbanale}]
Suppose that $\nub(\phi)\neq\{e_G\}$; then there exists an element $e_G\neq g\in \nub(\phi)$. The family
$$\mathcal{B}_g=\{U\in\mathcal{B}(G): U \text{ is tidy above for }\phi,\, g\notin U \}$$
is a local base at $e_G$, so by Claim \ref{basesuff}
$$h_{top}(\phi)=\sup\{H_{top}(\phi,U):U\in\mathcal{B}_g\}.$$
As every subgroup tidy for $\phi$ contains $g$ by the choice of $g$, no subgroup $U$ in $\mathcal{B}_g$ is tidy for $\phi$.
By Theorem \ref{TP}, no $U\in\mathcal B_g$ is minimizing for $\phi$ and so, in view of Theorem \ref{lf} and Corollary \ref{s=min}, 
$$\log s(\phi)<\log s(\phi,U)=\log[\phi(U_+):U_+]=H_{top}(\phi,U).$$ 
Therefore, $\log s(\phi)<h_{top}(\phi)$.  
\end{proof}

\smallskip

\begin{proof}[\bf Second proof of Theorem \ref{nubbanale}]
Suppose that $\log s(\phi)=h_{top}(\phi)$. Let $U\in \mathcal B(G)$ be tidy above for $\phi$.
Then $$s(\phi,U)=[\phi(U):U\cap \phi(U)]=[\phi(U_+):U_+]$$ by Corollary \ref{s=min}. By Theorem \ref{lf} and by our assumption we have 
$$\log s(\phi,U)=\log[\phi(U_+):U_+]=H_{top}(\phi,U)\leq h_{top}(\phi)=\log s(\phi).$$ Therefore $s(\phi)=s(\phi,U)$ and so $U$ is minimizing for $\phi$, that is $U$ is tidy for $\phi$ by Theorem \ref{TP}. We have shown that every $U\in\mathcal B(G)$ that is tidy above for $\phi$ is also tidy for $\phi$. Then $\mathcal M(G,\phi)$ is a local base at $e_G$ by Corollary \ref{esiste n}, hence $\nub(\phi)=\{e_G\}$ by Corollary \ref{tidylocalbase}.
\end{proof}

\section{Basic ``entropic'' properties of the scale function}\label{ep}

In this section we give properties of the scale function similar to the basic properties satisfied by the topological entropy; so we start reminding the latter ones in the following result.

\begin{fact}\label{properties}
Let $G$ be a totally disconnected locally compact group and $\phi:G\to G$ a topological automorphism.
\begin{itemize}
\item[(a)]\emph{[Logarithmic Law]} For every integer $k\geq0$ we have $h_{top}(\phi^k) = k \cdot h_{top}(\phi)$.
\item[(b)]\emph{[Invariance under conjugation]} If $H$ is another totally disconnected locally compact group and $\xi:G\to H$ is a topological isomorphism, then $h_{top}(\phi) = h_{top}(\xi\phi\xi^{-1})$.
\item[(c)]\emph{[Monotonicity]} If $H$ is a $\phi$-stable closed subgroup of $G$, then $h_{top}(\phi)\geq h_{top}(\phi\restriction_H)$; if $H$ is normal and $\overline\phi:G/H\to G/H$ is the topological automorphism induced by $\phi$, then $h_{top}(\phi)\geq h_{top}(\overline{\phi})$.
\item[(d)]\emph{[Weak Addition Theorem]} If $G=G_1\times G_2$ and $\phi_i:G_i\to G_i$ is a topological automorphism for $i=1,2$, then $h_{top}(\phi_1\times\phi_2)=h_{top}(\phi_1)+h_{top}(\phi_2)$.
\item[(e)]\emph{[Continuity]} If $G$ is an inverse limit $G=\varprojlim G/N_i$ with $N_i$ a $\phi$-stable closed normal subgroup, then $h_{top}(\phi)=\sup_{i\in I}h_{top}(\overline\phi_i)$, where $\overline\phi_i:G/N_i\to G/N_i$ is the topological automorphism induced by $\phi$.
\end{itemize}
\end{fact}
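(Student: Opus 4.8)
The plan is to derive all five properties uniformly from the limit-free formula of Theorem~\ref{lf}, which recasts the topological entropy as
\[
h_{top}(\phi)=\sup\{\log[\phi(U_+):U_+]:U\in\mathcal B(G)\},
\]
together with two bookkeeping tools: the antimonotonicity of $H_{top}(\phi,-)$ and Claim~\ref{basesuff}, which lets us restrict this supremum to any local base at $e_G$ contained in $\mathcal B(G)$. For the monotonicity statement I will instead fall back on the original definition $H_{top}(\phi,U)=\lim_n\frac1n\log[U:U_{-n}]$, which transports more easily to subgroups and quotients. I will also use the elementary identity $[\phi^k(U_+):U_+]=[\phi(U_+):U_+]^k$ (the telescoping of the chain $U_+\subseteq\phi(U_+)\subseteq\cdots$ recorded in Section~\ref{scale-sec}), and the fact that the index of (not necessarily normal) subgroups behaves well under products and under injective or surjective maps of coset spaces.

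\emph{Items (b), (d), (e).} For (b), the isomorphism $\xi$ carries $\mathcal B(G)$ bijectively onto $\mathcal B(H)$ and satisfies $\xi(U)_{\xi\phi\xi^{-1}+}=\xi(U_+)$, whence $[(\xi\phi\xi^{-1})(\xi(U_+)):\xi(U_+)]=[\phi(U_+):U_+]$; take suprema. For (d), the products $U_1\times U_2$ with $U_i\in\mathcal B(G_i)$ form a local base at $e_{G_1\times G_2}$; since $(U_1\times U_2)_+=(U_1)_+\times(U_2)_+$ and the index of a product of subgroups factors as a product, Theorem~\ref{lf} gives $H_{top}(\phi_1\times\phi_2,U_1\times U_2)=H_{top}(\phi_1,U_1)+H_{top}(\phi_2,U_2)$, and taking suprema over $U_1$ and $U_2$ separately yields additivity (also when a term is $+\infty$). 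For (e), given $U\in\mathcal B(G)$ one finds $i\in I$ and $V\in\mathcal B(G/N_i)$ with $\pi_i^{-1}(V)\subseteq U$; then $\pi_i^{-1}(V)$ is closed in the compact set $U$, hence lies in $\mathcal B(G)$, and such preimages form a local base at $e_G$. Using $\phi(N_i)=N_i$ one checks $\pi_i^{-1}(V)_+=\pi_i^{-1}(V_+)$, so $\pi_i$ induces a bijection of coset spaces giving $[\phi(\pi_i^{-1}(V)_+):\pi_i^{-1}(V)_+]=[\overline\phi_i(V_+):V_+]$, i.e.\ $H_{top}(\phi,\pi_i^{-1}(V))=H_{top}(\overline\phi_i,V)$ by Theorem~\ref{lf}; passing to suprema via Claim~\ref{basesuff} gives $h_{top}(\phi)=\sup_i h_{top}(\overline\phi_i)$.

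\emph{Items (a) and (c).} For the Logarithmic Law with $k\geq 1$ (the case $k=0$ being trivial since $H_{top}(\mathrm{id}_G,U)=0$), fix $U\in\mathcal B(G)$ and set $V=\bigcap_{j=0}^{k-1}\phi^j(U)\in\mathcal B(G)$, so $V\subseteq U$ and $\bigcap_n\phi^{kn}(V)=\bigcap_m\phi^m(U)=U_+$. Applying Theorem~\ref{lf} to $\phi^k$ and to $\phi$, together with the telescoping identity, gives $H_{top}(\phi^k,V)=\log[\phi^k(U_+):U_+]=k\log[\phi(U_+):U_+]=k\,H_{top}(\phi,U)$; antimonotonicity then yields $H_{top}(\phi^k,U)\le k\,H_{top}(\phi,U)$, and taking suprema both ways gives $h_{top}(\phi^k)=k\,h_{top}(\phi)$. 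For monotonicity (c): if $H$ is a $\phi$-stable closed subgroup, the traces $U\cap H$ ($U\in\mathcal B(G)$) form a local base at $e_H$, one has $(U\cap H)_{-n}=U_{-n}\cap H$, and the natural injection of $(U\cap H)/(U_{-n}\cap H)$ into $U/U_{-n}$ gives $H_{top}(\phi\restriction_H,U\cap H)\le H_{top}(\phi,U)$; take suprema. If moreover $H$ is normal, the images $\pi(U)$ form a local base at $e_{G/H}$, one has $\pi(U_{-n})\subseteq\pi(U)_{-n}$, and the surjection $U/U_{-n}\twoheadrightarrow\pi(U)/\pi(U_{-n})$ gives $H_{top}(\overline\phi,\pi(U))\le H_{top}(\phi,U)$; take suprema.

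\emph{Where the work is.} No step needs a new idea, so the real obstacle is the coherent management of these reductions: verifying in each case that the proposed family (boxes, traces $U\cap H$, images $\pi(U)$, preimages $\pi_i^{-1}(V)$) is genuinely a local base at the identity consisting of \emph{compact} open subgroups, so that Claim~\ref{basesuff} applies. The most delicate points are that $\pi_i^{-1}(V)$ is compact only because it sits inside a compact $U$ (so one must first shrink), and that for the quotient part of (c) one cannot simply invoke Theorem~\ref{lf}, since $\pi(U)_+$ may be strictly larger than $\pi(U_+)$; this is precisely why monotonicity is handled through the original limit definition rather than the limit-free one. Checking the identities $\bigcap_n\phi^{kn}(V)=U_+$, $(U\cap H)_{-n}=U_{-n}\cap H$, and $\pi_i^{-1}(V)_+=\pi_i^{-1}(V_+)$ uses only that $\phi$ is an automorphism and that the subgroups involved are $\phi$-stable, and is routine.
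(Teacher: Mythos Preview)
The paper does not actually prove Fact~\ref{properties}; it records these properties as known and cites the preprint \cite{DG-htop} (with \cite{St} for the compact case). So there is no in-paper proof to compare against, and your proposal supplies what the paper deliberately outsources.

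Your arguments are essentially correct and are exactly in the spirit the paper suggests, namely deriving everything from Theorem~\ref{lf} and Claim~\ref{basesuff}. Parts (a), (b), (c), (d) are clean: the auxiliary subgroup $V=\bigcap_{j=0}^{k-1}\phi^j(U)$ in (a) to force $V_{\phi^k+}=U_{\phi+}$ is the right device, and your observation that the quotient half of (c) must go through the limit definition (since $\pi(U_+)$ may sit properly inside $\pi(U)_+$) is correct and worth keeping.

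One small gap in (e): your local-base computation together with Claim~\ref{basesuff} gives
\[
h_{top}(\phi)=\sup\{H_{top}(\overline\phi_i,V):\pi_i^{-1}(V)\text{ compact}\}\le \sup_{i\in I} h_{top}(\overline\phi_i),
\]
but the reverse inequality $h_{top}(\overline\phi_i)\le h_{top}(\phi)$ for each $i$ is precisely the quotient part of (c). As written, your phrase ``passing to suprema via Claim~\ref{basesuff} gives $h_{top}(\phi)=\sup_i h_{top}(\overline\phi_i)$'' hides this appeal to (c); you should make it explicit. With that one-line insertion the proof of (e) is complete.
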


The Logarithmic Law for the scale function is already known:

\begin{fact}[Logarithmic Law]\emph{\cite[Corollary 3]{Willis}}\label{loglaw}
Let $G$ be a totally disconnected locally compact group, $\phi:G\to G$ a topological automorphism and $n\geq 0$ an integer. Then $s(\phi^n)=s(\phi)^n$.
\end{fact}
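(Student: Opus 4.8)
For $n=0$ the identity is trivial ($\phi^0=\mathrm{id}_G$ admits $\phi^0$-invariant compact open subgroups, so $s(\phi^0)=1=s(\phi)^0$), so assume $n\ge 1$. The quickest route is through the limit formula of \cite[Theorem 7.7]{M} recalled in Section~\ref{comp}: for every $U\in\mathcal B(G)$ and every topological automorphism $\psi$ of $G$,
$$\log s(\psi)=\lim_{k\to\infty}\frac{\log[\psi^k(U):U\cap\psi^k(U)]}{k}.$$
Fix $U\in\mathcal B(G)$ and set $a_k=\log[\phi^k(U):U\cap\phi^k(U)]$. Applying the formula to $\psi=\phi$ gives $a_k/k\to\log s(\phi)$, while applying it to the automorphism $\psi=\phi^n$ gives $\log s(\phi^n)=\lim_{k\to\infty}a_{nk}/k$. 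Since $a_{nk}/k=n\cdot\bigl(a_{nk}/(nk)\bigr)$ and $\bigl(a_{nk}/(nk)\bigr)_k$ is a subsequence of the convergent sequence $\bigl(a_k/k\bigr)_k$, it converges to $\log s(\phi)$ as well; hence $\log s(\phi^n)=n\log s(\phi)$, and exponentiating yields $s(\phi^n)=s(\phi)^n$.

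I would also indicate a second proof staying inside the tidy-subgroup machinery of Section~\ref{scale-sec}, closer in spirit to the rest of the paper. The inequality $s(\phi^n)\le s(\phi)^n$ reduces to the submultiplicativity estimate $[\phi^n(U):U\cap\phi^n(U)]\le[\phi(U):U\cap\phi(U)]^n$ for every $U\in\mathcal B(G)$ (then take $U$ minimizing for $\phi$): this follows by inserting the descending chain $\phi^n(U)\supseteq\phi^n(U)\cap\phi^{n-1}(U)\supseteq\cdots\supseteq\bigcap_{k=0}^{n}\phi^k(U)$, noting that the last term is contained in $U\cap\phi^n(U)$, and observing that each successive index, after applying a suitable power of the automorphism $\phi$, is bounded by $[\phi(U):U\cap\phi(U)]$. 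For the reverse inequality, start from a $U\in\mathcal B(G)$ tidy for $\phi$. Tidy aboveness passes to $\phi^n$ for free: inverse $\phi$-invariance implies inverse $\phi^n$-invariance, so $U_{\phi+}\subseteq U_{\phi^n+}$ and $U_{\phi-}\subseteq U_{\phi^n-}$, whence $U=U_{\phi+}U_{\phi-}\subseteq U_{\phi^n+}U_{\phi^n-}\subseteq U$. If moreover $U_{\phi^n+}=U_{\phi+}$, then $U_{\phi^n++}=\bigcup_{k}\phi^{nk}(U_{\phi+})=U_{\phi++}$ is closed, so $U$ is tidy (also below) for $\phi^n$, hence minimizing for $\phi^n$; then Theorem~\ref{TP} applied to $\phi^n$, together with the index tower $U_{\phi+}\subseteq\phi(U_{\phi+})\subseteq\cdots\subseteq\phi^n(U_{\phi+})$ recalled in Section~\ref{scale-sec} (each step of index $[\phi(U_{\phi+}):U_{\phi+}]=s(\phi)$ by Corollary~\ref{s=min}), gives $s(\phi^n)=[\phi^n(U_{\phi+}):U_{\phi+}]=s(\phi)^n$.

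The main obstacle in this second route is precisely the identity $U_{\phi^n+}=U_{\phi+}$ for $U$ tidy for $\phi$, and it genuinely needs tidy belowness: for a subgroup that is only tidy above it can fail, as in Example~\ref{example}, where $V$ is tidy above for the shift but $V_{\sigma^{2}+}\supsetneq V_{\sigma+}$. The inclusion $U_{\phi+}\subseteq U_{\phi^n+}$ is clear, and $\bigcap_{i=0}^{n-1}\phi^i(U_{\phi^n+})$ is easily seen to be inversely $\phi$-invariant and contained in $U$, hence contained in $U_{\phi+}$; upgrading this to $U_{\phi^n+}\subseteq U_{\phi+}$ amounts to knowing that $\phi$ expands $U_{\phi+}$ inside $U_{\phi++}$ at bounded rate, i.e.\ that $U_{\phi+}$ is open in $U_{\phi++}$ (equivalently that $U_{\phi++}$ is closed), which is the Baire-category input underlying the characterisation of tidy belowness given after diagram~\eqref{carpet} and is exactly what Willis's tidying procedure \cite{Willis2} provides. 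Since the limit-formula argument sidesteps all of this, I would present it as the primary proof.
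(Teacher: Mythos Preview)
The paper does not supply a proof of this statement: it is recorded as a \emph{Fact} and simply cited from \cite[Corollary~3]{Willis}, so there is nothing in the paper to compare your argument against.

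That said, your primary proof via M\"oller's limit formula \cite[Theorem~7.7]{M} is correct and self-contained: the subsequence argument on $a_{nk}/(nk)$ is exactly right, and it gives the full equality in one stroke. It is also a genuinely different route from Willis's original proof in \cite{Willis}, which goes through the structure of tidy subgroups; your argument trades that structure theory for the single analytic input that the limit exists for \emph{every} $U\in\mathcal B(G)$, which is what makes the subsequence trick available.

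Your second, tidy-subgroup outline is honest about its gap. The submultiplicativity chain for $s(\phi^n)\le s(\phi)^n$ is fine. For the reverse inequality you correctly isolate the point that needs work: passing from $\bigcap_{i=0}^{n-1}\phi^i(U_{\phi^n+})\subseteq U_{\phi+}$ to $U_{\phi^n+}\subseteq U_{\phi+}$ is precisely where tidy-belowness (equivalently, openness of $U_{\phi+}$ in $U_{\phi++}$) enters, and your counterexample drawn from Example~\ref{example} showing $V_{\sigma^2+}\supsetneq V_{\sigma+}$ for a merely tidy-above $V$ is apt. Since you do not actually close this gap here but defer to \cite{Willis2}, it is appropriate that you present the limit-formula argument as the primary proof.
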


Invariance under conjugation is clear also for the scale function:

\begin{lemma}[Invariance under conjugation]\label{invariance}
Let $G$ be a totally disconnected locally compact group and $\phi:G\to G$ a topological automorphism. Let $H$ be another totally disconnected locally compact group and $\xi:G\to H$ a topological isomorphism. Then
\begin{itemize}
\item[(a)] $U\in\mathcal B(G)$ if and only if $\xi(U)\in\mathcal B(H)$, therefore $\mathcal{B}(H)=\{\xi(U): U \in \mathcal{B}(G)\}$.
\item[(b)] If $U\in\mathcal B(G)$, then $U\in\mathcal M(G,\phi)$ if and only if $\xi(U)\in\mathcal M(H,\xi\phi\xi^{-1})$, and in particular, 
$$\mathcal M(H,\xi\phi\xi^{-1})=\{\xi(U):U\in\mathcal M(G,\phi)\};$$
\item[(c)] $s(\phi)=s(\xi\phi\xi^{-1})$.
\end{itemize}
\end{lemma}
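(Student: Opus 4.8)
The plan is to establish the three items in order, each being an essentially formal transfer of structure along the topological isomorphism $\xi$. For part (a), I would use that $\xi$ is a homeomorphism and a group isomorphism: a subset $U\subseteq G$ is an open compact subgroup if and only if $\xi(U)$ is, since $\xi$ preserves openness, compactness, and the subgroup property in both directions. Hence $\mathcal B(H)=\{\xi(U):U\in\mathcal B(G)\}$. This also records the trivial but useful fact that $\xi$ maps a local base at $e_G$ onto a local base at $e_H$.

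For part (b), the key observation is the conjugation identity $(\xi\phi\xi^{-1})^n=\xi\phi^n\xi^{-1}$ for all $n\in\Z$, from which $(\xi\phi\xi^{-1})^n(\xi(U))=\xi(\phi^n(U))$. Applying $\xi$ to the defining intersections and unions in \eqref{upiu} and \eqref{upiupiu}, and using that $\xi$ commutes with arbitrary intersections and unions (being a bijection), one gets $\xi(U)_{\psi+}=\xi(U_{\phi+})$, $\xi(U)_{\psi-}=\xi(U_{\phi-})$, and likewise for the double-subscript subgroups, where $\psi:=\xi\phi\xi^{-1}$. Since $\xi$ is a group isomorphism it respects products of subsets, so $\xi(U)=\xi(U)_{\psi+}\xi(U)_{\psi-}$ exactly when $U=U_{\phi+}U_{\phi-}$ (tidy above is preserved), and $\xi(U)_{\psi++}$ is closed exactly when $U_{\phi++}$ is closed (tidy below is preserved, as $\xi$ is a homeomorphism). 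Thus $U$ is tidy for $\phi$ iff $\xi(U)$ is tidy for $\psi$, and by Theorem \ref{TP} this is the same as saying $U\in\mathcal M(G,\phi)$ iff $\xi(U)\in\mathcal M(H,\psi)$; combined with (a) this gives $\mathcal M(H,\psi)=\{\xi(U):U\in\mathcal M(G,\phi)\}$.

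For part (c), I would simply compute: for any $U\in\mathcal B(G)$,
$$s(\psi,\xi(U))=[\psi(\xi(U)):\xi(U)\cap\psi(\xi(U))]=[\xi(\phi(U)):\xi(U\cap\phi(U))]=[\phi(U):U\cap\phi(U)]=s(\phi,U),$$
using that $\xi$ is an isomorphism so indices are preserved. Taking the minimum over $U\in\mathcal B(G)$ on the left — which by (a) ranges over all of $\mathcal B(H)$ in the form $\xi(U)$ — yields $s(\psi)=s(\phi)$. Alternatively, one can invoke (b) together with Theorem \ref{TP} applied to a tidy $U$: $s(\phi)=[\phi(U_+):U_+]=[\xi(\phi(U_+)):\xi(U_+)]=[\psi(\xi(U)_+):\xi(U)_+]=s(\psi)$.

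There is no genuine obstacle here; the only point requiring a little care is the bookkeeping in part (b), namely checking that all the auxiliary subgroups $U_{\phi\pm}$, $U_{\phi\pm\pm}$ and the tidiness conditions transport correctly along $\xi$ — and this is immediate once one notes $\psi^n=\xi\phi^n\xi^{-1}$ and that $\xi$ is simultaneously a homeomorphism and a group isomorphism, hence commutes with images of intersections, unions, and products and preserves closedness.
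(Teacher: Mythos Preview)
Your proposal is correct. The only difference from the paper is in the organization of (b) and (c): the paper proves the index identity $s_G(\phi,U)=s_H(\xi\phi\xi^{-1},\xi(U))$ --- which is exactly your first computation in (c) --- and uses it \emph{directly} to obtain (b) from the definition of minimizing subgroup, and then (c) is immediate from (b). You instead establish (b) by transporting the tidy-above and tidy-below conditions along $\xi$ and invoking Theorem~\ref{TP}, reserving the index identity for (c). Your route for (b) is correct but unnecessarily heavy: it relies on the nontrivial characterization of minimizing subgroups as tidy subgroups, whereas the paper's argument stays at the level of the defining minimum in \eqref{(S)} and needs nothing beyond the index identity you yourself prove in (c). On the other hand, your approach has the minor bonus of showing explicitly that tidiness (not just the minimizing property) is preserved under conjugation.
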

\begin{proof}
(a) is clear, (b) follows from the fact that $s_G(\phi,U)=s_H\bigl(\xi\phi\xi^{-1},\xi(U)\bigr)$ for every $U\in\mathcal{B}(G)$, and (c) follows from (b).
\end{proof}

Consider the case $H=G$ in the above lemma. We see in Example \ref{minex} that, while $\mathcal M(G,\xi\phi\xi^{-1})=\{\xi(U):U\in\mathcal M(G,\phi)\}$ and also $s(\phi)=s(\xi\phi\xi^{-1})$ by Lemma \ref{invariance}(c), it may occur the case that $\mathcal M(G,\phi)$ do not coincide with $\mathcal M(G,\xi\phi\xi^{-1})$.
This stresses the fact that the correspondence between minimizing subgroups for $\phi$ and minimizing subgroups for $\xi\phi\xi^{-1}$ is given by $U\mapsto \xi(U)$.

\begin{example}\label{minex}
Let $\phi\colon \Q_p^2\to\Q_p^2$ the topological automorphism defined by the matrix $\begin{pmatrix}0&p\\p^{-1}&0\end{pmatrix}$. 
Then $\phi^2=id$ and so $s(\phi)=1$ by Fact \ref{loglaw}.
Nevertheless,
$$s(\phi,\J_p^2)=\bigl[(p\J_p)\times(p^{-1}\J_p):(p\J_p)\times\J_p\bigr]=p$$
and hence $\J_p^2$ is not a minimizing subgroup for $\phi$, although it is a minimizing subgroup for the canonical Jordan form of 
$\phi$. Indeed, let $\xi\colon \Q_p^2\to\Q_p^2$ be the topological automorphism defined by $\begin{pmatrix}p&-p\\1&1\end{pmatrix}$ and $\psi\colon \Q_p^2\to\Q_p^2$ the topological automorphism defined by $\begin{pmatrix}1&0\\0&-1\end{pmatrix}$, then $\phi=\xi\psi\xi^{-1}$, i.e., $\psi$ is the canonical Jordan form of $\phi$.
It is obvious that $s(\psi,\J_p^2)=1$.
\end{example}

Monotonicity was proved in \cite{Willis2}, indeed the following more precise relation was given there.

\begin{fact}[Monotonicity]\emph{\cite[Proposition 4.7]{Willis2}}\label{closedsubgroups}
Let $G$ be a totally disconnected locally compact group, $\phi:G\to G$ a topological automorphism and $H$ a $\phi$-stable closed subgroup of $G$. Then
\begin{itemize}
\item[(a)] $s(\phi)\geq s(\phi\restriction_H)$.
\end{itemize}
If $H$ is also normal and $\overline\phi:G/H\to G/H$ is the topological automorphism induced by $\phi$, then
\begin{itemize}
\item[(b)] $s(\phi\restriction_H)\cdot s(\overline\phi)$ divides $s(\phi)$.
\end{itemize}
\end{fact}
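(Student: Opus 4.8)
The plan is to reduce both statements to a single computation with one tidy subgroup and to control how its ``$U_+$-part'' behaves under passing to $H$ and to $G/H$. Fix $U\in\mathcal B(G)$ tidy for $\phi$, so that by Theorem \ref{TP} one has $s(\phi)=[\phi(U_+):U_+]$, and recall from Corollary \ref{s=min} that for any automorphism $\psi$ and any $W\in\mathcal B(\,\cdot\,)$ one has $s(\psi)\le s(\psi,W)$ with $s(\psi,W)=[\psi(W_+):W_+]$ whenever $W$ is tidy above. The one elementary input is that, since $H$ is $\phi$-stable, $\phi^n(H)=H$ for all $n\in\Z$, so $\phi^n(U\cap H)=\phi^n(U)\cap H$ and $\overline\phi^n(\pi(U))=\pi(\phi^n(U))$, where $\pi\colon G\to G/H$ is the quotient map; intersecting over $n\ge0$ gives $(U\cap H)_+=U_+\cap H$ (the left-hand ${}_+$ taken for $\phi\restriction_H$).

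\textbf{Part (a).} Put $V=U\cap H\in\mathcal B(H)$. Then $V_+=U_+\cap H$, hence by Corollary \ref{s=min}
\[
s(\phi\restriction_H)\le[\phi(V_+):V_+]=[\phi(U_+)\cap H:U_+\cap H].
\]
Since $U_+\subseteq\phi(U_+)$ we have $U_+\cap H=\big(\phi(U_+)\cap H\big)\cap U_+$, so the assignment $x(U_+\cap H)\mapsto xU_+$ is a well-defined injection of the coset space of $U_+\cap H$ in $\phi(U_+)\cap H$ into the coset space of $U_+$ in $\phi(U_+)$. Therefore $[\phi(U_+)\cap H:U_+\cap H]\le[\phi(U_+):U_+]=s(\phi)$, which gives (a). No tidiness properties of $V$ or $\pi(U)$ are needed here.

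\textbf{Part (b).} Now assume $H$ is normal. Then $\phi(U_+)\cap H$ is normal in $\phi(U_+)$, so the chain $U_+\subseteq U_+\big(\phi(U_+)\cap H\big)\subseteq\phi(U_+)$ of subgroups yields the factorization
\[
s(\phi)=[\phi(U_+):U_+]=\big[U_+(\phi(U_+)\cap H):U_+\big]\cdot\big[\phi(U_+):U_+(\phi(U_+)\cap H)\big].
\]
The first factor equals $[\phi(U_+)\cap H:U_+\cap H]=[\phi(V_+):V_+]$ as in part (a), hence is $\ge s(\phi\restriction_H)$, and equals $s(\phi\restriction_H)$ once one knows $V=U\cap H$ is tidy for $\phi\restriction_H$. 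For the second factor, $\pi$ restricted to $\phi(U_+)$ has kernel $\phi(U_+)\cap H\subseteq U_+(\phi(U_+)\cap H)$, so the second factor equals $[\pi(\phi(U_+)):\pi(U_+)]=[\overline\phi(\pi(U_+)):\pi(U_+)]$. From $U=U_+U_-$ we get $\pi(U)=\pi(U_+)\pi(U_-)\subseteq\pi(U)_+\pi(U)_-$, so $\pi(U)$ is tidy above for $\overline\phi$; a short index computation then identifies this factor with $s(\overline\phi,\pi(U))=[\overline\phi(\pi(U)_+):\pi(U)_+]$, which is $\ge s(\overline\phi)$. Granting that each factor is a \emph{multiple} of $s(\phi\restriction_H)$, resp.\ $s(\overline\phi)$, the product $s(\phi\restriction_H)\cdot s(\overline\phi)$ divides $s(\phi)=s(\phi,U)$, which is the assertion; strictness of the division is allowed precisely because $V$ and $\pi(U)$ need not be tidy below even though $U$ is.

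\textbf{Main obstacle.} The inequalities above are straightforward; the real content is in two ``heredity'' facts that I would import from Willis's tidying procedure in \cite{Willis,Willis2}: (i) one may choose $U$ tidy for $\phi$ so that $U\cap H$ is tidy for $\phi\restriction_H$ (this is where normality and the relative tidying enter, since $U=U_+U_-$ alone does not force $U\cap H=(U_+\cap H)(U_-\cap H)$), and (ii) for a subgroup $W$ that is merely tidy above for a topological automorphism $\psi$, the scale $s(\psi)$ divides $s(\psi,W)=[\psi(W_+):W_+]$ — a by-product of the tidying-below step. With (i) and (ii) the displayed factorization closes the argument; everything else is the bookkeeping with $\phi(U_+)\cap H\trianglelefteq\phi(U_+)$ and with $\pi$ recorded above.
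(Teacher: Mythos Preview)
The paper does not give its own proof of this statement: it is recorded as a \emph{Fact} with a citation to \cite[Proposition~4.7]{Willis2}, so there is no argument in the paper to compare your sketch against.

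On the merits of your attempt: part (a) is correct and self-contained as written --- the injection of coset spaces is exactly the right mechanism, and nothing about tidiness of $V=U\cap H$ is needed there.

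For part (b), your factorisation of $[\phi(U_+):U_+]$ through $U_+(\phi(U_+)\cap H)$ is sound, and the identification of the first factor with $[\phi(V_+):V_+]$ is fine. The problem is the second factor. You correctly obtain $[\overline\phi(\pi(U_+)):\pi(U_+)]$, but then assert that ``a short index computation'' identifies this with $s(\overline\phi,\pi(U))=[\overline\phi(\pi(U)_+):\pi(U)_+]$. That step is not short: one only has $\pi(U_+)=\pi\bigl(\bigcap_n\phi^n(U)\bigr)\subseteq\bigcap_n\pi(\phi^n(U))=\pi(U)_+$, and the reverse inclusion can fail. The cancellation argument $[\overline\phi(A):A]=[\overline\phi(B):B]$ for $A\subseteq B$ with $A\subseteq\overline\phi(A)$ and $B\subseteq\overline\phi(B)$ is valid only when $[B:A]<\infty$; you have not shown $[\pi(U)_+:\pi(U_+)]$ is finite, and in general for nested $\phi$-increasing subgroups with infinite relative index the two growth indices can differ. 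Consequently your imported fact (ii), which applies to the tidy-above subgroup $\pi(U)$ and hence to the index $[\overline\phi(\pi(U)_+):\pi(U)_+]$, does not directly control your second factor $[\overline\phi(\pi(U_+)):\pi(U_+)]$. So even granting (i) and (ii), the bookkeeping does not close.

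Willis's actual argument in \cite{Willis2} avoids this mismatch by working with the factorisation of $s(\phi,U)=[\phi(U):U\cap\phi(U)]$ itself (rather than passing to $U_+$), together with the heredity statement that for $U$ tidy for $\phi$ both $U\cap H$ and $UH/H$ are tidy for $\phi\restriction_H$ and $\overline\phi$ respectively. If you want to salvage your route through $U_+$, you would need to prove that $\pi(U_+)=\pi(U)_+$ under the hypothesis that $U$ is tidy --- this is true, but it requires a compactness argument in the spirit of Lemma~\ref{taeq}(d) and is not the ``short index computation'' you suggest.
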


\begin{remark}
\begin{itemize}
\item[(a)] We call the property in item (d) of Fact \ref{properties} Weak Addition Theorem. Indeed, the stronger so-called Addition Theorem holds for the topological entropy in the compact case (see \cite{B,St,Y}). More precisely, by Addition Theorem we mean the following property, imposed on all continuous endomorphisms $\phi:G\to G$ of compact groups $G$: 
if $H$ is a closed $\phi$-invariant normal subgroup of $G$, then 
$$
h_{top}(\phi)=h_{top}(\phi\restriction_H)+h_{top}(\overline\phi),
$$
where $\overline\phi:G/H\to G/H$ is the continuous endomorphism induced by $\phi$.
\item[(b)] It is not known whether the Addition Theorem for the topological entropy holds also in the general case of locally compact groups, even under the hypotheses that $G$ is totally disconnected (and abelian) and that $\phi:G\to G$ is a topological automorphism.
\item[(c)] The counterpart of the Addition Theorem for the scale function does not hold true in general, since \cite[Example 6.4]{Willis2} shows that the inequality $s(\phi)\geq s(\phi\restriction_H)\cdot s(\overline\phi)$ in Fact \ref{closedsubgroups}(b) can be strict.

On the other hand, we see in Theorem \ref{wATscala} below that the Weak Addition Theorem holds also for the scale function.
\end{itemize}
\end{remark}

Note that we call this kind of properties Addition Theorem also for the scale function, even if they have a multiplicative form in this case; just take the logarithm to have the additive form.

\begin{theorem}[Weak Addition Theorem]\label{wATscala}
Let $G$, $H$ be totally disconnected locally compact groups and $\phi:G\to G$, $\psi:H\to H$ topological automorphisms. Then $s(\phi\times\psi)=s(\phi)\cdot s(\psi)$.
\end{theorem}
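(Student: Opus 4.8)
The plan is to show that the product of a subgroup tidy for $\phi$ and a subgroup tidy for $\psi$ is tidy for $\phi\times\psi$, and then read off the scale from Theorem \ref{TP}. Since the scale is attained, fix $U\in\mathcal M(G,\phi)$ and $V\in\mathcal M(H,\psi)$ and put $W=U\times V$, which clearly belongs to $\mathcal B(G\times H)$. The first step is the bookkeeping for the subgroups attached to $W$ by $\phi\times\psi$: since taking preimages and images under $\phi\times\psi$ acts coordinatewise and intersections commute with products, $W_{+}=U_{+}\times V_{+}$ and $W_{-}=U_{-}\times V_{-}$; moreover $W_{++}=\bigcup_{n}\bigl(\phi^{n}(U_{+})\times\psi^{n}(V_{+})\bigr)=U_{++}\times V_{++}$, where the last equality uses that both chains $\{\phi^{n}(U_{+})\}_{n}$ and $\{\psi^{n}(V_{+})\}_{n}$ are increasing.

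From these identities tidiness of $W$ is immediate. Tidy above: $W_{-}W_{+}=(U_{-}\times V_{-})(U_{+}\times V_{+})=U_{-}U_{+}\times V_{-}V_{+}=U\times V=W$, since $U$ and $V$ are tidy above by Theorem \ref{TP}. Tidy below: $W_{++}=U_{++}\times V_{++}$ is closed in $G\times H$, being a product of closed subgroups (here $U_{++}$ and $V_{++}$ are closed because $U$ and $V$ are tidy below). Hence $W$ is tidy, so minimizing for $\phi\times\psi$ by Theorem \ref{TP}, and
\[
s(\phi\times\psi)=[(\phi\times\psi)(W_{+}):W_{+}]=[\phi(U_{+})\times\psi(V_{+}):U_{+}\times V_{+}]=[\phi(U_{+}):U_{+}]\cdot[\psi(V_{+}):V_{+}]=s(\phi)\cdot s(\psi),
\]
where the middle equality is the multiplicativity of the index for a product subgroup inside a product group (legitimate since $U_{+}\leq\phi(U_{+})$ and $V_{+}\leq\psi(V_{+})$), and the outer equalities are Theorem \ref{TP} applied to $\phi\times\psi$, to $\phi$, and to $\psi$.

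I do not expect a genuine obstacle here; the only points requiring a line of care are that the union defining $W_{++}$ really factors as $U_{++}\times V_{++}$ (which is where the monotonicity of the two chains enters) and the multiplicativity of the index. Alternatively, one can avoid verifying the tidy-below condition altogether: the inequality $s(\phi\times\psi)\leq s(\phi)\cdot s(\psi)$ follows at once from Corollary \ref{s=min} by evaluating $[(\phi\times\psi)(W_{+}):W_{+}]$ on $W=U\times V$ with $U,V$ merely tidy above and minimizing the two factors independently, while the reverse inequality is a special case of Monotonicity: applying Fact \ref{closedsubgroups}(b) to the $(\phi\times\psi)$-stable closed normal subgroup $G\times\{e_{H}\}$ of $G\times H$ (on which $\phi\times\psi$ restricts to a copy of $\phi$ and modulo which it induces a copy of $\psi$, using Lemma \ref{invariance}) shows that $s(\phi)\cdot s(\psi)$ divides $s(\phi\times\psi)$.
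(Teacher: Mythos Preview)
Your main argument is correct and essentially identical to the paper's: choose tidy subgroups in each factor, verify that their product is tidy for $\phi\times\psi$ (with the same observation that monotonicity of the chains gives $W_{++}=U_{++}\times V_{++}$), and read off the scale---the only cosmetic difference is that you compute the final index via $[\phi(U_{+}):U_{+}]$ from Theorem~\ref{TP} whereas the paper uses $[\phi(U):U\cap\phi(U)]$ directly. Your alternative route via Corollary~\ref{s=min} and Fact~\ref{closedsubgroups}(b) is a nice extra observation not present in the paper.
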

\begin{proof}
Let $V\in \mathcal B(G)$ be tidy for $\phi$ and $W\in \mathcal B(H)$ be tidy for $\psi$. For the compact and open subgroup $V\times W\subseteq G\times H$, we have that
\begin{align*}
(V\times W)_+&=\bigcap_{k\geq 0}(\phi\times\psi)^k(V\times W)=\bigcap_{k\geq 0}\bigl(\phi^k(V)\times 
\psi^k(W)\bigr)\\&=\Bigl(\bigcap_{k\geq 0}\phi^k(V)\Bigr)\times \Bigl(\bigcap_{k\geq 0}\psi^k(W)\Bigr)=V_+\times W_+,
\end{align*}
and in the same way one can prove that $(V\times W)_-=V_-\times W_-$. Since 
$$
(V\times W)_+(V\times W)_-=(V_+V_-)\times (W_+W_-)=V\times W,
$$
we have that $V\times W$ is tidy above for $\phi\times \psi$. The subgroup $V\times W$ is also tidy below for $\phi\times \psi$ because
\begin{align*}
(V\times W)_{++}&=\bigcup_{k\geq 0}(\phi\times\psi)^k(V\times W)_+
=\bigcup_{k\geq 0}\Bigl(\phi^k(V_+)\times \psi^k(W_+)\Bigr)\\
&\stackrel{*}{=}\Bigl(\bigcup_{k\geq 0}\phi^k(V_+)\Bigr)\times \Bigl(\bigcup_{k\geq 0}\psi^k(W_+)\Bigr)=
V_{++}\times W_{++}
\end{align*}
is a closed subgroup of $G\times H$. The equality ($*$) holds because the families $\{\phi^k(V_+)\}_{k \geq 0}$ and $\{\psi^k(W_+)\}_{k\geq 0}$ are increasing families of subgroups of $G$ and $H$ respectively.

Therefore $V\times W$ is tidy for $\phi\times \psi$ and
$$
s(\phi\times \psi)=\bigl[(\phi\times \psi)(V\times W):\bigl((V\times W)\cap (\phi\times \psi)(V\times W)\bigr)\bigr].
$$
This index is equal to
$$
\bigl[\phi (V):V\cap \phi (V)\bigr]\cdot\bigl[\psi (W):W\cap \psi (W)\bigr]=s(\phi)\cdot s(\psi)
$$
and hence $s(\phi\times \psi)=s(\phi)\cdot s(\psi).$
\end{proof}

We conclude this section discussing the continuity of the scale function with respect to direct and inverse limits. The next proposition should be compared with \cite[Proposition 5.3]{Willis2}, where the scale function is considered for inner automorphisms on an increasing sequence of closed subgroups.

\begin{proposition}[Continuity for direct limits]\label{directlimits}
Let $G$ be a totally disconnected locally compact group and $\phi\colon G\to G$ a topological automorphism. If $G\cong \varinjlim_{i\in I} H_i$, where $\{H_i\}_{i\in I}$ is a directed system of $\phi$-stable open subgroups of $G$, then there exists $j\in I$ such that
$$s(\phi)=s(\phi\restriction_{H_j})=\max_{i\in I}s(\phi\restriction_{H_i}).$$
\end{proposition}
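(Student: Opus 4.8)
The plan is to exploit the fact that the scale function of a topological automorphism, restricted to an open subgroup, can be computed using compact open subgroups of that subgroup, which are automatically compact open in $G$ as well. First I would observe that since each $H_i$ is open, $\mathcal B(H_i)\subseteq\mathcal B(G)$, and since each $H_i$ is $\phi$-stable, $\phi\restriction_{H_i}$ is a well-defined topological automorphism of $H_i$; moreover for $U\in\mathcal B(H_i)$ one has $s_{H_i}(\phi\restriction_{H_i},U)=[\phi(U):U\cap\phi(U)]=s_G(\phi,U)$ since $\phi(U)\subseteq H_i$. In particular $s(\phi\restriction_{H_i})=\min\{s_G(\phi,U):U\in\mathcal B(H_i)\}\geq\min\{s_G(\phi,U):U\in\mathcal B(G)\}=s(\phi)$, which already gives $s(\phi\restriction_{H_i})\geq s(\phi)$ for every $i$ (this is of course just Fact \ref{closedsubgroups}(a), since each $H_i$ is $\phi$-stable and closed, being open).

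Next I would produce a single index $j$ witnessing equality. By Theorem \ref{TP} there is a subgroup $V\in\mathcal B(G)$ that is tidy for $\phi$, with $s(\phi)=s_G(\phi,V)$. Since $V$ is compact and $\{H_i\}_{i\in I}$ is a directed system of open subgroups with union (a dense subgroup topologically generating) $G$, the open cover $\{H_i\}$ of the compact set $V$ admits a finite subcover, and by directedness a single $H_j$ with $V\subseteq H_j$. Then $V\in\mathcal B(H_j)$, so
$$s(\phi\restriction_{H_j})\leq s_{H_j}(\phi\restriction_{H_j},V)=s_G(\phi,V)=s(\phi).$$
Combined with the reverse inequality from the previous paragraph, $s(\phi\restriction_{H_j})=s(\phi)$. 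Since $s(\phi\restriction_{H_i})\geq s(\phi)=s(\phi\restriction_{H_j})$ for all $i$, the value $s(\phi\restriction_{H_j})$ is the maximum of the $s(\phi\restriction_{H_i})$, and it equals $s(\phi)$; this is exactly the asserted chain of equalities.

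The only genuinely nontrivial point is the compactness argument locating $j$: one must make sure that $G\cong\varinjlim_{i\in I}H_i$ as a directed union of \emph{open} subgroups really does force $V$ to be absorbed into some single $H_j$. This is where one uses that $V$ is compact and each $H_i$ is open (hence the $H_i$ form an open cover of $V$, since their union is all of $G$, or at least contains $V$), together with the directedness of the system to pass from a finite subcover to one member. No tidying-procedure subtleties enter here, so I expect this to be the main — and essentially the only — obstacle, and it is a mild one.
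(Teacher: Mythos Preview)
Your overall strategy is sound, but you have reversed an inequality direction, and this breaks the final step. The direct argument in your first paragraph correctly establishes $s(\phi\restriction_{H_i})\geq s(\phi)$ for every $i$ (the minimum over the smaller family $\mathcal B(H_i)\subseteq\mathcal B(G)$ is at least the minimum over the larger one). However, this is \emph{not} Fact~\ref{closedsubgroups}(a): that fact states the opposite inequality $s(\phi)\geq s(\phi\restriction_{H_i})$. Consequently, from $s(\phi\restriction_{H_i})\geq s(\phi)=s(\phi\restriction_{H_j})$ you cannot conclude that $s(\phi\restriction_{H_j})$ is the \emph{maximum} of the $s(\phi\restriction_{H_i})$; with the inequality you actually proved it would be the minimum. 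To obtain the maximum you need $s(\phi\restriction_{H_i})\leq s(\phi)$ for all $i$, which is precisely what Fact~\ref{closedsubgroups}(a) provides once it is cited in the correct direction.

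With this correction, your route is actually more economical than the paper's. The paper invokes Fact~\ref{closedsubgroups}(a) for the upper bound, uses compactness to place a minimizing $U\in\mathcal M(G,\phi)$ inside some $H_j$, and then \emph{verifies that $U$ remains tidy} (both above and below) for $\phi\restriction_{H_j}$ in order to conclude $s(\phi\restriction_{H_j})=s(\phi\restriction_{H_j},U)=s(\phi,U)=s(\phi)$. Your second paragraph bypasses the tidiness check: since $V\in\mathcal B(H_j)$ and the scale is a minimum, one gets $s(\phi\restriction_{H_j})\leq s(\phi\restriction_{H_j},V)=s(\phi,V)=s(\phi)$ immediately. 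In fact, note that your own first-paragraph inequality $s(\phi\restriction_{H_i})\geq s(\phi)$, combined with the correctly stated Fact~\ref{closedsubgroups}(a), already yields $s(\phi\restriction_{H_i})=s(\phi)$ for \emph{every} $i$; the compactness and direct-limit hypotheses are then not even needed.
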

\begin{proof} 
By Lemma \ref{closedsubgroups}(a) the inequalities 
\begin{equation}\label{inequality1}
s(\phi)\geq s(\phi\restriction_{H_i})
\end{equation}
hold true for every $i\in I$. Then $s(\phi)\geq\max_{i\in I}s(\phi\restriction_{H_i})$.

Let $U\in\mathcal M(G,\phi)$. Then $\{H_i\cap U\}_{i\in I}$ is an open covering of $U$ and so it admits a finite open subcover, because $U$  is compact. This means that there exists a finite set $F\subseteq I$ such that $U\subseteq \bigcup_{i\in F}H_i.$
Moreover, there exists an index $j\in I$ such that $H_i\subseteq H_j$ for every $i\in F$ and so $U\subseteq H_j$.
In particular, $U\in \mathcal B(H_j)$. This implies that $U$ is tidy above also for $\phi\restriction_{H_j}$. Indeed, both automorphisms 
$\phi$ and $\phi\restriction_{H_j}$ share the same subgroups $U_+$ and $U_-$. Moreover,  $U$ is tidy below for $\phi\restriction_{H_j}$, because 
$$
U_{\phi\restriction_{H_j}++}=U_{\phi++}=U_{\phi++}\cap H_j
$$
is a closed subgroup of $H_j$. This means that
$$
s(\phi)=s(\phi,U)=s(\phi\restriction_{H_j},U)= s(\phi\restriction_{H_j}).
$$
Hence, in view of \eqref{inequality1},
$$s(\phi)=s(\phi\restriction_{H_j})=\max_{i\in I}s(\phi\restriction_{H_i}),$$
and this concludes the proof.
\end{proof}

\begin{remark}   A counterpart of Proposition \ref{directlimits} regarding continuity for inverse limits holds true:  if $G$ is a totally disconnected locally compact group, $\phi:G\to G$ is a topological automorphism and $G=\varprojlim G/N_i$ is an inverse limit where $\{N_i\}_{i\in I}$ is an inverse system of $\phi$-stable closed normal subgroups of $G$, then $s(\phi)=\max_{i\in I} s(\overline \phi_i)$, where $\overline \phi_i:G/N_i\to G/N_i$ is the topological automorphism induced by $\phi$ for every $i\in I$. A proof can be found in \cite{BDGT}, while the case of inner automorphisms is proved in \cite[Proposition 5.4]{Willis2}.  
\end{remark}

Applying Proposition  \ref{directlimits} one obtains the following corollary still concerning continuity of the scale function with respect to direct limits; the condition on the stable subgroups is relaxed from open to closed, while the set of indices is now supposed to be countable.

\begin{corollary}
Let $G$ be a totally disconnected locally compact group and $\phi\colon G\to G$ a topological automorphism. If $G\cong \varinjlim_{n\geq 0} H_n$, where $\{H_n\}_{n\geq 0}$ is a directed system of $\phi$-stable closed subgroups of $G$, then exists an integer $n\geq0$ such that
$$s(\phi)=s(\phi\restriction_{H_n})=\max_{n\in \N}s(\phi\restriction_{H_n}).$$
\end{corollary}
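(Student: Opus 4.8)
The plan is to reduce to Proposition~\ref{directlimits} by upgrading, through a Baire category argument, the hypothesis that the $H_n$ are \emph{closed} to the stronger hypothesis that cofinally many of them are \emph{open}, and then to use Monotonicity to pass from the cofinal subsystem back to the whole family.

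First I would observe that $G$, being locally compact and Hausdorff, is a Baire space, while $G\cong\varinjlim_{n\geq0}H_n$ means in particular that $G=\bigcup_{n\geq0}H_n$ is a countable union of closed subgroups. Hence some $H_{n_0}$ has non-empty interior; since a subgroup of a topological group with non-empty interior is open, $H_{n_0}$ is an open subgroup of $G$. Next, set $J=\{n\in\N:H_{n_0}\subseteq H_n\}$, which is cofinal in $\N$ by directedness of the system. For every $n\in J$ the subgroup $H_n$ contains the open subgroup $H_{n_0}$, hence is itself open; thus $\{H_n\}_{n\in J}$ is a directed system of $\phi$-stable open subgroups of $G$ with $\bigcup_{n\in J}H_n=G$, so $G\cong\varinjlim_{n\in J}H_n$. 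Proposition~\ref{directlimits} now yields an index $j\in J$ with
$$s(\phi)=s(\phi\restriction_{H_j})=\max_{n\in J}s(\phi\restriction_{H_n}).$$

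Finally I would check that the maximum over $J$ coincides with the maximum over all of $\N$. Given an arbitrary $n\in\N$, directedness provides $m\in J$ with $H_n\subseteq H_m$; since $H_n$ is a $\phi$-stable closed subgroup of the totally disconnected locally compact group $H_m$, Monotonicity (Fact~\ref{closedsubgroups}(a)) applied inside $H_m$ gives
$$s(\phi\restriction_{H_n})\leq s(\phi\restriction_{H_m})\leq \max_{k\in J}s(\phi\restriction_{H_k})=s(\phi).$$
Therefore $\max_{n\in\N}s(\phi\restriction_{H_n})=s(\phi)=s(\phi\restriction_{H_j})$, which is exactly the asserted equality (with the witnessing integer $j$). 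The only non-routine step is the first one: recognizing that Baire's theorem forces one of the closed subgroups $H_n$ to be open, since it is precisely this that makes Proposition~\ref{directlimits} applicable; everything else is bookkeeping with cofinality and monotonicity.
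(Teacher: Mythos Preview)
Your proof is correct and follows the same route as the paper: invoke the Baire Category Theorem on $G=\bigcup_{n\in\N}H_n$ to find an open $H_{n_0}$, then apply Proposition~\ref{directlimits} to the cofinal subfamily of open subgroups. The paper's version is terser---it simply notes that $H_n$ is open for all $n\geq m$ and applies Proposition~\ref{directlimits} to that tail---while you spell out the cofinality argument and make the use of Monotonicity (Fact~\ref{closedsubgroups}(a)) explicit in passing from the maximum over $J$ to the maximum over all of $\N$; this last step is left implicit in the paper but is indeed needed.
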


\begin{proof}  Apply the Baire Category Theorem to $G=\bigcup_{n\in\N}H_n$ to conclude that for some $m\geq0$ the subgroup $H_m$ has non-empty  interior. Therefore, $H_n$ is open for all $n \geq m$.  Now apply Proposition \ref{directlimits} to the family of these open subgroups.
\end{proof}

\section{Scalar $p$-adic Yuzvinski Formula and Bridge Theorem}\label{yf-bt}

In the first part of this section, and more precisely in Theorem \ref{formulaQpN}, we compute directly the value of the scale function of any topological automorphism $\phi:\Q_p^n\to \Q_p^n$, where $n>0$; note that such an automorphism $\phi$ is a $\Q_p$-linear transformation and so it is given by an $n\times n$ matrix with coefficients in $\Q_p$.

\medskip
We start recalling some useful information about the $p$-adic numbers, giving reference to \cite{G} for more details. Let $\lvert -\rvert_p$ be the $p$-adic norm over $\Q_p$, that is, for $\xi\in\Q_p$
$$\lvert\xi\rvert_p=\begin{cases}
0&\text{if }\xi=0,\\
p^r&\text{if }\xi=p^{-r}\Bigl(\sum_{i=0}^{\infty}a_ip^i\Bigr)\text{ with }a_0\in\{1,2, \ldots, p-1\}.
\end{cases}$$
Note that $\J_p=\{\xi\in\Q_p: \lvert\xi\rvert_p\leq 1\}$ is a local PID with maximal ideal $\{\xi\in\Q_p: \lvert\xi\rvert_p< 1\}$.

\smallskip
If $K$ is a finite extension of $\Q_p$ of degree $d=[K:\Q_p]$, then the $p$-adic norm $\lvert-\rvert_p$ over $\Q_p$ can be extended 
to a norm over $K$, and this extension is unique. We indicate this extended norm with $\lvert-\rvert_p$ and we call it the $p$-adic 
norm over $K$. Let $$\mathcal{O}=\{\xi\in K: \lvert\xi\rvert_p\leq 1\},$$ then $\mathcal{O}$ is a local PID with maximal ideal
$$
\mathfrak{m}=\{\xi\in K: \lvert\xi\rvert_p< 1\}.
$$
Consider now a generator $\pi'$ for $\mathfrak{m}$, then there exists an integer $e>0$ such that $p=u\pi'^e$, where $u$ is a unit in $\mathcal{O}$.
 Denote by $\pi$ a generator of $\mathfrak{m}$ such that $p=\pi^e$. This number $e$ is independent  of the choice of the generator of $\mathfrak{m}$, divides the degree $d$ of the extension and it is called the \emph{ramification index} of $K$ over $\Q_p$. One can prove that the residual field $\mathcal{O}/\mathfrak{m}$ has cardinality
\begin{equation}\label{residualfield}
\bigl\lvert\mathcal{O}/\mathfrak{m}\bigr\rvert=[\mathcal{O}:\mathfrak{m}]=p^f,
\end{equation}
where we let $f=d/e$.

\medskip
The following example is the basic case necessary to obtain in Theorem \ref{formulaQpN} an explicit formula for the scale function of topological automorphisms of $\Q_p^n$.

\begin{example}\label{exJordan}
Let $K$ be a finite extension of $\Q_p$ of degree $d$, let $\lambda\in K$ and consider the topological automorphism $\phi\colon K^n
\to K^n$ defined by the Jordan block
$$
J=\begin{pmatrix}
\lambda&1&0&\dots&0\\0&\lambda&1&\dots&0\\\vdots&\ddots&\ddots&\ddots&\vdots\\0&\dots&0&\lambda&1\\0&\dots&0&0&\lambda
\end{pmatrix}
\in GL_n(K).
$$
We see that $\mathcal{O}^n$ is minimizing for $\phi$ and that
\begin{equation}\label{detJ}
s_{K^n}(\phi)=\max\{1,\lvert\lambda\rvert_p^{nd}\}=\max\{1,\lvert\det J\rvert_p^d\}.
\end{equation}
Note that the nature of the automorphism $\phi$ is completely determined by $\lambda$. In fact if $\lambda\in\mathcal{O}$ then $\mathcal{O}^n$ is $\phi$-invariant and $s(\phi)=s(\phi,\mathcal{O}^n)=1$, otherwise it is inversely $\phi$-invariant. In both cases the subgroup is minimizing for $\phi$ by Lemma \ref{invariantsubgroup}(a,c).
 
Suppose that $\lambda\notin\mathcal{O}$, this means that
\begin{equation*}
s(\phi)=s(\phi,\mathcal{O}^n)=\bigl[\phi(\mathcal{O}^n):\mathcal{O}^n\cap\phi(\mathcal{O}^n)\bigr]=\bigl[(\lambda\mathcal{O})^n:\mathcal{O}^n
\bigr]=\bigl[\lambda\mathcal{O}:\mathcal{O}\bigr]^n.
\end{equation*}
Let $e$ be the ramification index of the extension $K$ over $\Q_p$, $\pi$ a generator for $\mathfrak{m}$ such that $p=\pi^e$ and $f=d/e$. Then $\lambda=\pi^{-l}\xi$, where $\xi\in \mathcal{O}\setminus \mathfrak{m}$, $l>0$ and
\begin{equation*}
\lvert\lambda\rvert_p=\lvert\pi^{-l}\rvert_p=\lvert p^{-l/e}\rvert_p=p^{l/e}.
\end{equation*}
This yields
$$
s(\phi)=\bigl[\lambda\mathcal{O}:\mathcal{O}\bigr]^n=\bigl[\pi^{-l}\mathcal{O}:\mathcal{O}\bigr]^n=[\pi^{-1}\mathcal{O}:\mathcal{O}
]^{ln}.
$$
By \eqref{residualfield}, $[\pi^{-1}\mathcal{O}:\mathcal{O}]=p^f$ and so
$$
s(\phi)=(p^f)^{ln}=(p^{l/e})^{efn}=\lvert\lambda\rvert_p^{dn}.
$$
This means exactly that
$$
s_{K^n}(\phi)=\max\{1,\lvert \lambda\rvert_p^{dn}\}.
$$
In conclusion note that $\lvert \lambda\rvert_p^n=\lvert\det J\rvert_p$, so this proves also the other equality in \eqref{detJ}. 
\end{example}

By Example \ref{esugpadica}(a) we have that the equality $\log s(\phi)=h_{top}(\phi)$ holds true for all topological automorphism $\phi\colon \Q_p^n\to\Q_p^n$, so one could apply the $p$-adic Yuzvinski Formula for the topological entropy proved in \cite{LW} to obtain \eqref{Yuz}. Nevertheless, we give another direct proof of this formula for sake of completeness, but also because the computation of the scale function is simpler than that of the topological entropy; indeed, for the scale function it suffices to take into account only one compact open subgroup which is minimizing (i.e., tidy) for $\phi$, without any recourse to the Haar measure.

\begin{theorem}\label{formulaQpN}
Let $\phi\colon \Q_p^n\to\Q_p^n$ be a topological automorphism, for an integer $n>0$. Then
\begin{equation}\label{Yuz}
s_{\Q_p^n}(\phi)=\prod_{\lvert\lambda_i\rvert_p>1}\lvert\lambda_i\rvert_p,
\end{equation}
where $\{\lambda_1,\ldots,\lambda_n\}$ is the family of all eigenvalues of $\phi$ contained in a finite extension $K$ of $\Q_p$.
\end{theorem}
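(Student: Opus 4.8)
The plan is to reduce the general statement, via the multiplicativity of the scale function, to the basic computation already carried out in Example \ref{exJordan}. Regard $\Q_p^n$ as a module over the PID $\Q_p[x]$ through the action of $\phi$. By the structure theorem (primary/elementary divisor decomposition), there is a $\Q_p$-linear isomorphism intertwining $\phi$ with a finite direct sum $\bigoplus_s \mu_s$, where $\mu_s$ is multiplication by $x$ on the cyclic primary module $\Q_p[x]/(g_s(x)^{k_s})$ and each $g_s$ is irreducible over $\Q_p$ (the $g_s$ need not be distinct, and $g_s(0)\neq 0$ since $\phi$ is invertible). As every $\Q_p$-linear isomorphism between finite-dimensional $\Q_p$-vector spaces is automatically a topological isomorphism, Lemma \ref{invariance}(c) gives $s_{\Q_p^n}(\phi)=s(\bigoplus_s\mu_s)$, and the Weak Addition Theorem (Theorem \ref{wATscala}, applied iteratively to the finitely many factors) yields $s_{\Q_p^n}(\phi)=\prod_s s(\mu_s)$.

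Next I would identify each factor with a single Jordan block over a finite extension. Fix a root $\lambda_s$ of $g_s$ in a fixed algebraic closure, put $K_s=\Q_p(\lambda_s)$ and $d_s=[K_s:\Q_p]=\deg g_s$. The space $K_s^{k_s}$ with the automorphism $J_s$ given by the single Jordan block of size $k_s$ with eigenvalue $\lambda_s$ is, as a $\Q_p[x]$-module under the action of $J_s$, again cyclic: the minimal polynomial of $J_s$ viewed as a $\Q_p$-linear operator is $g_s(x)^{k_s}$ (one checks this after extending scalars to $\bar\Q_p$, where $J_s$ becomes $\bigoplus_{i}J_{k_s}(\lambda_s^{(i)})$ over the $d_s$ distinct conjugates $\lambda_s^{(i)}$ of $\lambda_s$), which has degree $d_sk_s=\dim_{\Q_p}K_s^{k_s}$. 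Hence $(K_s^{k_s},J_s)$ is $\Q_p$-linearly and equivariantly isomorphic to $(\Q_p[x]/(g_s(x)^{k_s}),\mu_s)$, so by Lemma \ref{invariance}(c) and Example \ref{exJordan}, $s(\mu_s)=s_{K_s^{k_s}}(J_s)=\max\{1,\lvert\lambda_s\rvert_p^{d_sk_s}\}$.

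It remains to match the two sides by bookkeeping of eigenvalues. The characteristic polynomial of $\phi$ is $\prod_s g_s(x)^{k_s}$, so the eigenvalues of $\phi$ (listed with multiplicity, all lying in the splitting field, which we may take inside $K$) contributed by the factor $\mu_s$ are exactly the $d_s$ Galois conjugates of $\lambda_s$, each with multiplicity $k_s$. By the uniqueness of the extension of $\lvert-\rvert_p$ from $\Q_p$ to any finite extension, conjugate elements share the same $p$-adic absolute value $\lvert\lambda_s\rvert_p$. Therefore the contribution of the block $\mu_s$ to the right-hand side of \eqref{Yuz} equals $\lvert\lambda_s\rvert_p^{d_sk_s}$ if $\lvert\lambda_s\rvert_p>1$ and $1$ otherwise, i.e. precisely $\max\{1,\lvert\lambda_s\rvert_p^{d_sk_s}\}=s(\mu_s)$; multiplying over $s$ gives $\prod_{\lvert\lambda_i\rvert_p>1}\lvert\lambda_i\rvert_p=\prod_s s(\mu_s)=s_{\Q_p^n}(\phi)$.

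The only genuinely delicate points are algebraic rather than topological: one must verify that the module decomposition and the identification of each cyclic piece with a Jordan block are $\phi$-equivariant and take place inside finite-dimensional $\Q_p$-vector spaces (so that ``algebraic isomorphism'' upgrades for free to ``topological isomorphism'', making Lemma \ref{invariance} and Theorem \ref{wATscala} applicable), together with the standard fact that Galois-conjugate elements of $\bar\Q_p$ have equal $p$-adic absolute value. Once these are settled, the theorem follows simply by assembling Example \ref{exJordan}, Lemma \ref{invariance} and Theorem \ref{wATscala}.
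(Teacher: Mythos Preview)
Your proof is correct and reaches the same conclusion via the same three tools (Example \ref{exJordan}, Lemma \ref{invariance}(c), Theorem \ref{wATscala}), but the decomposition you choose is genuinely different from the paper's. The paper first passes to the single splitting field $K$ of the characteristic polynomial, observes that $\phi^K:=\phi\otimes_{\Q_p}\mathrm{id}_K$ on $K^n\cong\Q_p^{dn}$ is conjugate to the $d$-fold product $\phi\times\cdots\times\phi$, deduces $s_{\Q_p^n}(\phi)^d=s_{K^n}(\phi^K)$, and only then puts $\phi^K$ into Jordan form over $K$ and applies Example \ref{exJordan} block by block; the $d$-th root at the end recovers the formula. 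You instead stay over $\Q_p$ via the primary decomposition of $\Q_p^n$ as a $\Q_p[x]$-module, and for each cyclic piece $\Q_p[x]/(g_s^{k_s})$ pass to the (possibly smaller) field $K_s=\Q_p(\lambda_s)$, identifying that piece $\phi$-equivariantly with a single Jordan block $J_{k_s}(\lambda_s)$ on $K_s^{k_s}$. Your route avoids the global $d$-th-power trick, at the cost of the extra verification that $(K_s^{k_s},J_s)$ is cyclic over $\Q_p[x]$ with minimal polynomial $g_s^{k_s}$ (your base-change argument handles this cleanly, using separability of $g_s$ in characteristic $0$) and of invoking that Galois-conjugate elements share the same $p$-adic absolute value. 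Both arguments are short; the paper's is a bit more hands-on with the linear algebra, while yours makes the eigenvalue bookkeeping slightly more transparent since each primary block contributes exactly one Galois orbit of eigenvalues.
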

\begin{proof}

Assume without loss of generality that $K=\Q_p[\lambda_1,\dots,\lambda_n]$, that is, $K$ is the splitting field of the minimal 
polynomial of $\phi$ over $\Q_p$, and let $d=[K:\Q_p]$.

Let $\phi^K=\phi\otimes_{\Q_p} id_K\colon K^n\to K^n$, where $\otimes_{\Q_p}$ is the tensor product over $\Q_p$. The automorphisms 
$\phi$ and $\phi^K$ are represented by the same matrix respectively over $\Q_p$ and $K$, hence they have the same eigenvalues.

Let $\mathcal{A}$ be a base of $K$ over $\Q_p$, then every $\xi\in K$ has coordinates $[\xi]_{\mathcal{A}}=(\xi_{(1)},\dots,
\xi_{(d)})$. Moreover, $K^n\cong \Q_p^{dn}$ and this isomorphism $\alpha\colon K^n\to \Q_p^{dn}$ is given by
$$
\alpha(\xi_1,\dots,\xi_n)=\Bigl([\xi_1]_{\mathcal{A}},\dots,[\xi_n]_{\mathcal{A}}\Bigr).
$$
Let
$$
\Phi=\underbrace{\phi\times\dots\times\phi}_{d\text{ times}}\colon \Q_p^{dn}\to \Q_p^{dn};
$$
then $\phi^K=\alpha^{-1}\Phi\alpha$. Lemma \ref{invariance}(c) and Fact \ref{loglaw} yield
\begin{equation}
s_{\Q_p^n}(\phi)^d=s_{\Q_p^{dn}}(\Phi)=s_{K^n}(\phi^K).\label{eq1}
\end{equation}
In $K^n$ there exists a base with respect to which the automorphism $\phi^K$ is in the canonical Jordan form, because $\phi^K$
splits over $K$. So, by Lemma \ref{invariance}(c), we can suppose that $\phi^K$ itself is represented by a matrix in the canonical 
Jordan form. Let $J_1,\dots, J_r$ be the Jordan blocks of this matrix, where each $J_l\in GL_{n_l}(K)$ is associated to the eigenvalue 
$\xi_l\in \{\lambda_1,\dots,\lambda_n\}$ of $\phi^K$.

Define $\phi_l\colon K^{n_l}\to K^{n_l}$ as the topological  automorphism associated to $J_l$, for every $l=1,\dots, r$.  Then by Example \ref{exJordan}
\begin{equation}
s_{K^{n_l}}(\phi_l)=\max\{1,\lvert\xi_l\rvert_p^{dn_l}\}=\max\{1,\lvert\det J_l\rvert_p^d\}.\label{eq2}
\end{equation}
Since $\phi^K = \phi_1\times \ldots \times \phi_r$, Theorem \ref{wATscala} entails
\begin{equation*}
s_{K^n}(\phi^K)=s_{K^{n_1}}(\phi_1)\cdot\ldots\cdot s_{K^{n_r}}(\phi_r).
\end{equation*}
Hence, from \eqref{eq1} and \eqref{eq2}, we obtain 
$$s_{\Q_p^n}(\phi)^d=s_{K^n}(\phi^K)=\prod_{\substack{l=1\\\lvert\xi_l\rvert_p>1}}^r\lvert\xi_l\rvert_p^{dn_l}.$$
This means that
$$s_{\Q_p^n}(\phi)=\prod_{\substack{l=1\\\lvert\xi_l\rvert_p>1}}^r\lvert\xi_l\rvert_p^{n_l}=\prod_{\substack{i=1\\\lvert\lambda_i
\rvert_p>1}}^n\lvert\lambda_i\rvert_p=\prod_{\lvert\lambda_i\rvert_p>1}\lvert\lambda_i\rvert_p,$$
and this concludes the proof.
\end{proof}

\medskip
In the second part of this section we provide a so-called Bridge Theorem for the scale function. To this end we first recall some properties of the Pontryagin duality.

\smallskip
We say that a locally compact group $G$ is \emph{compactly covered}, if every element of $G$ is contained in some compact subgroup of $G$.  The following folklore fact can be easily deduced from the standard properties of locally compact abelian groups. 

\begin{fact}\label{factCC} For a locally compact abelian group  $G$, the following conditions are equivalent:
\begin{itemize}
  \item[(a)] $G$ is compactly covered; 
  \item[(b)] $G$ contains no copies of the discrete group $\Z$; 
  \item[(c)] there exist no continuous surjective homomorphisms $\widehat{G}\to \T$; 
  \item[(d)] $\widehat{G}$ is totally disconnected. 
\end{itemize}
\end{fact}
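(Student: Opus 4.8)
\textbf{Proof proposal for Fact \ref{factCC}.}

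The plan is to prove the cycle of implications (a)$\Leftrightarrow$(b) directly, and then route the remaining equivalences through Pontryagin duality, using that the dual of a locally compact abelian group is again locally compact abelian and that duality is a contravariant exact functor reversing ``discrete'' and ``compact''. First I would dispatch (a)$\Rightarrow$(b): if $\Z$ embeds as a (necessarily closed, since it is discrete and locally compact) subgroup, then its generator lies in no compact subgroup of $\Z$ hence in no compact subgroup of $G$, contradicting (a). For (b)$\Rightarrow$(a), take $g\in G$ and consider the cyclic subgroup $\langle g\rangle$ with the topology induced from $G$; its closure $H=\overline{\langle g\rangle}$ is a closed subgroup of $G$, hence a locally compact abelian group which is monothetic (topologically generated by one element). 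The structure theory of monothetic locally compact abelian groups says that such a group is either compact or topologically isomorphic to $\Z$; since (b) forbids the latter, $H$ is compact, and $g\in H$ shows $g$ lies in a compact subgroup. This gives (a)$\Leftrightarrow$(b).

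Next I would establish (b)$\Leftrightarrow$(c). A continuous injective homomorphism $\Z\to G$ with discrete (equivalently, closed) image corresponds under duality to a continuous homomorphism $\widehat G\to \widehat{\Z}\cong\T$; I must be careful that a genuine topological embedding of $\Z$ corresponds to a \emph{surjective} map onto $\T$. The precise tool is: for a closed subgroup $H\le G$ the inclusion dualizes to the quotient map $\widehat G\to \widehat H$ (whose kernel is $H^\bot$), and this dual map is surjective; conversely a surjective continuous homomorphism $\widehat G\to\T$ dualizes to a continuous injective homomorphism $\T^{\,\widehat{}}\cong\Z\to \widehat{\widehat G}\cong G$ which is a topological embedding because $\Z$ is discrete and a continuous injection from a discrete group is automatically a homeomorphism onto its image. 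Running this in both directions, using Pontryagin reflexivity $\widehat{\widehat G}\cong G$ to pass back, yields the equivalence of (b) and (c).

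Finally (c)$\Leftrightarrow$(d): the group $\widehat G$ is totally disconnected if and only if its connected component $c(\widehat G)$ of the identity is trivial. For a locally compact abelian group $A$, the connected component $c(A)$ is trivial precisely when $A$ admits no nontrivial continuous homomorphism to $\T$ with connected (equivalently, divisible, equivalently nontrivial) image — more sharply, $c(A)$ is the annihilator in $A$ of the torsion subgroup of $\widehat A$, and $A$ is totally disconnected iff $\widehat A$ is torsion-free-quotient... here it is cleanest to invoke the standard fact that a locally compact abelian group $A$ is totally disconnected if and only if there is no continuous surjection $A\to\T$; one direction is immediate since $\T$ is connected and a continuous image of a totally disconnected... no, that is false in general, so instead use: $A$ totally disconnected $\Rightarrow$ by van Dantzig $A$ has a basis of compact open subgroups, and any continuous homomorphism $A\to\T$ is then trivial on such a subgroup, hence has image a quotient of the discrete group $A/U$, which cannot be all of $\T$; conversely if $A$ is not totally disconnected then $c(A)\neq\{e\}$ is a nontrivial connected locally compact abelian group, which therefore surjects onto $\T$, and this extends (by injectivity of $\T$ in the category of locally compact abelian groups, or by splitting off the connected part) to a continuous surjection $A\to\T$. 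Applying this with $A=\widehat G$ gives (c)$\Leftrightarrow$(d), completing the chain.

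I expect the main obstacle to be the bookkeeping in step two and in the last step — making precise which continuous maps correspond to topological embeddings versus mere continuous injections under duality, and justifying that a nontrivial connected locally compact abelian group surjects continuously onto $\T$ (this uses the structure theorem $c(\widehat G)\cong \R^a\times K$ with $K$ compact connected, hence dually a quotient of a power of $\R$ or a nontrivial character of $K$). Since the paper labels this ``folklore'' and merely needs it as a black box, I would keep these justifications to a brief sentence each, citing the structure theory of locally compact abelian groups rather than reproving it.
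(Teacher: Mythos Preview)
The paper does not prove this Fact: it is labeled folklore and said to be ``easily deduced from the standard properties of locally compact abelian groups'', with no argument supplied. So there is no paper proof to compare against.

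Your argument for (a)$\Leftrightarrow$(b) is correct, as are the implications $\neg$(b)$\Rightarrow\neg$(c) and $\neg$(d)$\Rightarrow\neg$(c). But two steps contain real errors. In the direction $\neg$(c)$\Rightarrow\neg$(b) you assert that ``a continuous injection from a discrete group is automatically a homeomorphism onto its image''; this is false (take $\Z\to\T$, $n\mapsto n\alpha$ with $\alpha$ irrational: the image is dense, not discrete). The dual of a continuous surjection is injective, but it is a topological embedding onto a closed subgroup only when the original surjection is \emph{open}. In the direction (d)$\Rightarrow$(c) you correctly observe that any continuous $f\colon A\to\T$ kills some compact open subgroup $U$ and hence factors through the discrete group $A/U$, but then claim the image ``cannot be all of $\T$''; this is also false, since discrete abelian groups can surject onto $\T$ (e.g.\ $\R$, or $\T$ itself, with the discrete topology).

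These gaps are not repairable, because (c) as literally written is not equivalent to the other three conditions. Let $G=\widehat{\T_d}$ be the (compact) Pontryagin dual of the circle equipped with the discrete topology. Then $G$ is compact, so (a) and (b) hold; $\widehat G\cong\T_d$ is discrete, so (d) holds; yet the identity map $\T_d\to\T$ is a continuous surjective homomorphism, so (c) fails. The intended reading of (c) is almost certainly that $\widehat G$ admits no \emph{open} continuous surjection onto $\T$ (equivalently, no Hausdorff quotient topologically isomorphic to $\T$). With that amendment your duality strategy works cleanly: closed subgroups $H\leq G$ correspond bijectively to Hausdorff quotients $\widehat G/H^\perp\cong\widehat H$, so a closed copy of $\Z$ in $G$ corresponds exactly to a quotient $\T$ of $\widehat G$, giving (b)$\Leftrightarrow$(c$'$); and a totally disconnected locally compact abelian group has only totally disconnected Hausdorff quotients (images of compact open subgroups under the quotient map are again compact open), giving (c$'$)$\Leftrightarrow$(d).
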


Our interest in Fact \ref{factCC} stems from the necessity to describe the class of totally disconnected locally compact abelian groups $G$, such that their dual group $\widehat{G}$ is totally disconnected as well. According to the above fact, these are precisely the compactly covered totally disconnected locally compact abelian groups. Therefore, for such a group $G$ one can define the scale function on the dual group $\widehat{G}$, and we are interested in the relationship between $s_G(\phi)$ and $s_{\widehat{G}}(\widehat{\phi})$, where $\phi\colon G\to G$ is a topological automorphism of $G$.

\medskip
In the next fact we collect several known properties that apply in the proof of Theorem \ref{BTS}.

\begin{fact}\label{fattiduale}
Let $G$ be a locally compact abelian group, $\phi\colon G\to G$ a topological automorphism and $U$ a compact subgroup of $G$. Then:
\begin{itemize}
  \item[(a)] $(U^\bot)^\bot=U$;
  \item[(b)] $U^\bot\cong \widehat{G/U}$ and $\widehat{U}\cong\widehat{G}/U^\bot$;
  \item[(c)] $\bigl(U+ \phi(U)\bigr)^\bot=U^\bot \cap\widehat{\phi}^{-1}(U^\bot)$;
  \item[(d)] if $V$ is another compact subgroup of $G$ and $U\subseteq V$, then $\widehat{V/U}\cong U^\bot/V^\bot$.
\end{itemize}
\end{fact}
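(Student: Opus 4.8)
The statement gathers standard facts from Pontryagin duality, so my plan is to derive all four items from two ingredients: the duality theorem (characters separate the points of a locally compact abelian group) and the exactness of the functor $\widehat{(-)}$ on short exact sequences of locally compact abelian groups. I would prove (a) by hand, obtain (b) by dualising the canonical sequence $0\to U\to G\to G/U\to 0$, get (c) by unwinding the definitions, and reduce (d) to (b). For (a), the inclusion $U\subseteq(U^\bot)^\bot$ is immediate; for the reverse inclusion, given $g\notin U$, the subgroup $U$ is compact hence closed, so $G/U$ is a Hausdorff locally compact abelian group with $g+U\neq 0$, and the duality theorem supplies $\psi\in\widehat{G/U}$ with $\psi(g+U)\neq 0$; precomposing with the quotient map $\pi\colon G\to G/U$ yields $\chi=\psi\circ\pi\in U^\bot$ with $\chi(g)\neq 0$, so $g\notin(U^\bot)^\bot$.

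For (b), I would note that $\psi\mapsto\psi\circ\pi$ is a continuous homomorphism $\widehat{G/U}\to\widehat G$, injective (since $\pi$ is onto) and with image exactly $U^\bot$ (a character of $G$ factors through $G/U$ iff it annihilates $U$); dualising the sequence $0\to U\to G\to G/U\to 0$ then shows that this map is a topological embedding and that restriction $\widehat G\to\widehat U$, $\chi\mapsto\chi\restriction_U$, is a continuous open surjection with kernel $U^\bot$. These two statements give $U^\bot\cong\widehat{G/U}$ and, on passing to the quotient by $U^\bot$, $\widehat G/U^\bot\cong\widehat U$.

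Item (c) is pure bookkeeping: $U+\phi(U)$ is a compact subgroup of $G$ (the continuous image of the compact set $U\times\phi(U)$ under addition), and a character $\chi$ annihilates it iff it annihilates both $U$ and $\phi(U)$; the first condition is $\chi\in U^\bot$, and the second reads $\widehat\phi(\chi)=\chi\circ\phi\in U^\bot$, i.e.\ $\chi\in\widehat\phi^{-1}(U^\bot)$, whence $(U+\phi(U))^\bot=U^\bot\cap\widehat\phi^{-1}(U^\bot)$. For (d) I would apply (b) to the closed subgroup $V/U$ of $G/U$, which yields $\widehat{V/U}\cong(\widehat{G/U})/(V/U)^\bot$ with the annihilator formed in $\widehat{G/U}$; under the identification $\widehat{G/U}\cong U^\bot$ from (b), a character of $G/U$ kills $V/U$ precisely when the corresponding character of $G$ kills $V$, and since $U\subseteq V$ forces $V^\bot\subseteq U^\bot$ this carries $(V/U)^\bot$ onto $V^\bot$, giving $\widehat{V/U}\cong U^\bot/V^\bot$.

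Since the conclusion consists of textbook facts, there is no genuine obstacle. The only step that is not purely formal is the upgrade from an algebraic isomorphism to a topological one in (b) (and hence in (d)): this is precisely what exactness of the Pontryagin functor on locally compact abelian groups supplies, or, concretely, what the open mapping theorem applied to the restriction homomorphism $\widehat G\to\widehat U$ gives.
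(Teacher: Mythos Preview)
Your proof is correct; these are indeed standard Pontryagin duality facts, and your derivations of (a)--(d) are accurate. Note, however, that the paper does not prove this statement at all: it is labelled a \emph{Fact} and presented as a collection of known properties, with no proof given. So there is nothing to compare against; you have simply supplied a proof where the paper chose to cite the results as folklore.
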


We are now in the conditions to prove the next Bridge Theorem, which asserts in particular that $s_{\widehat{G}}(\widehat{\phi})$ is equal to $s_G(\phi)$; note that according to Fact \ref{factCC}, the group $\widehat{G}$ is totally disconnected, so one can define the scale function on $\widehat{G}$. 

\begin{theorem}\label{BTS}
Let $G$ be a compactly covered totally disconnected locally compact abelian group and $\phi \colon G \to G$ a topological automorphism.
Then:
\begin{itemize}
  \item[(a)] $U\in\mathcal B(G)$ if and only if $U^\perp\in\mathcal B(\widehat G)$;
  \item[(b)] $s(\phi,U)=s(\phi,U^\perp)$;
  \item[(c)] $U\in \mathcal M(G,\phi)$ if and only if $U^\perp\in \mathcal M(\widehat G,\widehat\phi)$;
  \item[(d)] $s_{\widehat{G}}(\widehat{\phi}) = s_G(\phi)$.
\end{itemize}
\end{theorem}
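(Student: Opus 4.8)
The plan is to prove the four items in the order (a), (b), then deduce (d) and (c) simultaneously from them. The backbone is the annihilator correspondence $U\mapsto U^\perp$ together with Pontryagin duality $\widehat{\widehat G}\cong G$; this is legitimate here because, by Fact~\ref{factCC}, $\widehat G$ is again a totally disconnected locally compact abelian group, so the scale function is defined on it.

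For (a), if $U\in\mathcal B(G)$ then $U$ is compact and open, hence $G/U$ is discrete and $\widehat U$ is discrete; Fact~\ref{fattiduale}(b) then gives that $U^\perp\cong\widehat{G/U}$ is compact and $\widehat G/U^\perp\cong\widehat U$ is discrete, so $U^\perp$ is open in $\widehat G$. Thus $U^\perp\in\mathcal B(\widehat G)$, and applying the same to $\widehat G$ and using $(U^\perp)^\perp=U$ (Fact~\ref{fattiduale}(a)) gives the converse; in particular $U\mapsto U^\perp$ is a bijection $\mathcal B(G)\to\mathcal B(\widehat G)$.

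For (b), the idea is to rewrite $s_{\widehat G}(\widehat\phi,U^\perp)=[\widehat\phi(U^\perp):U^\perp\cap\widehat\phi(U^\perp)]$ entirely in terms of $U$ and $\phi$. A direct check on characters gives $\widehat\phi(U^\perp)=(\phi^{-1}(U))^\perp$, and since annihilators turn (subgroups generated by) sums into intersections, $U^\perp\cap\widehat\phi(U^\perp)=(U+\phi^{-1}(U))^\perp$. Now $U+\phi^{-1}(U)$ is a compact subgroup of $G$ (a sum of two compact subgroups) containing the open subgroup $\phi^{-1}(U)$, so the quotient $(U+\phi^{-1}(U))/\phi^{-1}(U)$ is finite; Fact~\ref{fattiduale}(d) identifies its dual with $(\phi^{-1}(U))^\perp/(U+\phi^{-1}(U))^\perp$, and a finite abelian group has the same order as its dual, whence
$$s_{\widehat G}(\widehat\phi,U^\perp)=[U+\phi^{-1}(U):\phi^{-1}(U)]=[U:U\cap\phi^{-1}(U)]=[\phi(U):\phi(U)\cap U]=s_G(\phi,U),$$
using the second isomorphism theorem for the middle equality and the fact that the automorphism $\phi$ preserves indices for the last one. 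The care required here is purely bookkeeping: keeping $\widehat\phi$ distinct from $\widehat{\phi^{-1}}$, verifying that $U+\phi^{-1}(U)$ is genuinely a compact subgroup, and checking that the quotients involved are finite so that dualizing preserves cardinality. I expect (b) to be the only real obstacle.

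Finally, (a) and (b) together say that the bijection $U\mapsto U^\perp$ intertwines $s_G(\phi,-)$ with $s_{\widehat G}(\widehat\phi,-)$, so the value sets $\{s_G(\phi,U):U\in\mathcal B(G)\}$ and $\{s_{\widehat G}(\widehat\phi,V):V\in\mathcal B(\widehat G)\}$ coincide. Taking the minimum of both sides yields (d), namely $s_{\widehat G}(\widehat\phi)=s_G(\phi)$, and then $U$ attains $s_G(\phi)$ precisely when $U^\perp$ attains the common minimum $s_{\widehat G}(\widehat\phi)$, which is (c). Thus (a), (c), (d) are essentially formal once (b) is in hand.
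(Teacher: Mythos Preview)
Your proof is correct and follows essentially the same approach as the paper's. Part (a) is identical; in (b) you and the paper both reduce to dualizing a finite quotient via Fact~\ref{fattiduale}, the only cosmetic difference being that the paper dualizes $(U+\phi(U))/U$ using Fact~\ref{fattiduale}(c) while you dualize $(U+\phi^{-1}(U))/\phi^{-1}(U)$ via $\widehat\phi(U^\perp)=(\phi^{-1}(U))^\perp$---both routes then invoke ``an automorphism preserves indices'' at the final step; and your deduction of (c) and (d) from the bijection in (a) intertwining the local scales in (b) is exactly the paper's argument, just phrased in the order (d) then (c) rather than (c) then (d).
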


\begin{proof} 
(a) Assume that $U\in\mathcal B(G)$. Then $G/U$ is discrete because $U$ is open in $G$, and so $\widehat{G/U}$ is a compact group. Since $U^\bot\cong \widehat{G/U}$ by Fact \ref{fattiduale}, so $U^\bot$ is a compact subgroup of $\widehat{G}$. Moreover, $U$ is compact in $G$ and so, since $\widehat G/U^\perp\cong \widehat U$ by Fact \ref{fattiduale}, $\widehat G/U^\perp$ is discrete; therefore, $U^\bot$ is open in $\widehat{G}$. Hence, we have proved that $U\in\mathcal B(G)$ implies $U^\perp\in\mathcal B(\widehat G)$. 

To verify the converse implication it suffices to note that by Pontryagin duality $G$ is canonically isomorphic to $\widehat{\widehat{G}}$, so that we can identify $G$ and $\widehat{\widehat{G}}$; moreover, $(U^\perp)^\perp=U$ by Fact \ref{fattiduale}(a). Now apply the previous implication.

(b) It is clear that $U/\bigl(U\cap \phi(U)\bigr)\cong\bigl(U+\phi (U)\bigr)/U$ and so
\begin{equation}\label{scaladuale1}
s(\phi,U) = \Bigl\lvert \frac{U}{U\cap\phi(U)}\Bigr\rvert =\Bigl\lvert \frac{U+\phi(U)}{U}\Bigr\rvert.
\end{equation}
Moreover, let $F= \bigl(U+\phi(U)\bigr)/U$; then $F$ is a finite abelian group and so $F\cong\widehat F$. Therefore, by \eqref{scaladuale1} and Fact \ref{fattiduale}, we have that
\begin{equation}\label{scaladuale2}
s(\phi,U) = \lvert F\rvert =\lvert \widehat{F}\rvert=
\Bigl\lvert\frac{U^\bot}{(U+\phi (U))^\bot}\Bigr\rvert=\Bigl\lvert \frac{U^\bot}{U^\bot \cap 
\widehat{\phi}^{-1}( U^\bot)} \Bigr\rvert.
\end{equation}
Finally, since $\widehat{\phi}$ is an automorphism,
\begin{equation}\label{scaladuale3}
\Bigl\lvert \frac{U^\bot}{U^\bot \cap \widehat{\phi}^{-1}( U^\bot)} \Bigr\rvert=\Bigl\lvert \frac{\widehat{\phi}(U^\bot)}{\widehat{\phi}(U^\bot )\cap U^\bot} \Bigr\rvert=s(\widehat{\phi},U^\bot).
\end{equation}
Now \eqref{scaladuale2} and \eqref{scaladuale3} give immediately the equality in (b).

(c) Let $U\in\mathcal M(G,\phi)$. If $V \in \mathcal B(\widehat{G})$, then $V^\bot  \in \mathcal B({G})$ by item (a), and $s(\phi,V^\bot) = s(\widehat{\phi},V)$ by item (b). So, applying twice (b), we have
\begin{equation*}\label{equazionefinale}
s(\widehat{\phi},U^\bot) = s(\phi,U) \leq  s(\phi,V^\bot) = s(\widehat{\phi},V).
\end{equation*}
Since this inequality holds true for all $V \in \mathcal B(\widehat{G})$, we can conclude that $U^\bot \in\mathcal M(\widehat G,\widehat{\phi})$.

To prove the converse implication apply Fact \ref{fattiduale}(a) and the previous implication.

(d) follows from (c).
\end{proof}


\begin{thebibliography}{00}


\bibitem{AKM} R. L. Adler, A. G. Konheim, M. H. McAndrew, \emph{Topological entropy}, Trans. Amer. Math. Soc. 114 (1965) 309--319.

\bibitem{BW} U. Baumgartner, G. A. Willis, \emph{Contraction groups and scales of automorphisms of totally disconnected locally compact groups}, Israel J. Math 142 (2004) 221--248.

\bibitem{BDGT} F. Berlai, D. Dikranjan, A. Giordano Bruno, D. Toller, \emph{Scale functions and entropies in totally disconnected locally compact groups}, work in progress. 

\bibitem{B} R. Bowen, \emph{Entropy for group endomorphisms and homogeneous spaces}, Trans. Amer. Math. Soc. 153 (1971) 401--414.

\bibitem{DG} D. Dikranjan, A. Giordano Bruno, \emph{The Pinsker subgroup of an algebraic flow}, J. Pure Appl. Algebra 216 (2) (2012) 364--376.

\bibitem{DG-limitfree} D. Dikranjan, A. Giordano Bruno, \emph{Limit free computation of entropy}, Rend. Istit. Mat. Univ. Trieste 44 (2012) 297--312.

\bibitem{DG-btlca} D. Dikranjan, A. Giordano Bruno, \emph{The Bridge Theorem for totally disconnected LCA groups}, preprint.


\bibitem{DG-islam} D. Dikranjan, A. Giordano Bruno, \emph{Topological entropy and algebraic entropy for group endomorphisms}, 
Proceedings ICTA2011 Islamabad, Pakistan July 4-10 2011 Cambridge Scientific Publishers (2012) 133--214.






\bibitem{DG-htop} A. Giordano Bruno, \emph{Topological entropy for automorphisms of totally disconnected locally compact groups}, preprint.


\bibitem{Gl1} H. Gl\"ockner, \emph{Scale functions on linear groups over local skew fields}, J. Algebra 205 (1998) 525--541.

\bibitem{Gl2} H. Gl\"ockner, \emph{Scale functions on $p$-adic Lie groups}, Manuscripta Math. 97 (1998) 205--215.

\bibitem{GW} H. Gl\"ockner, G. Willis, \emph{Uniscalar p-adic Lie groups},  Forum Math. 13 (3) (2001) 413--421. 

\bibitem{G} F. Q. Gouv\^ea, \emph{$p$-Adic Numbers, an Introduction}, Second Edition, Springer - Verlag, New York - Heidelberg - Berlin, 1997.




\bibitem{hood} B. M. Hood,  \emph{Topological entropy and uniform spaces}, J. London Math. Soc. 8 (2) (1974) 633--641. 





\bibitem{LW} D. Lind, T. Ward, \emph{Automorphisms of solenoids and $p$-adic entropy}, Ergodic Theory Dynam. Systems 8 (3) (1988) 411--419.

\bibitem{M} R. G. M\"oller, \emph{Structure theory of totally disconnected locally compact groups via graphs and permutations}, Canad. J. Math. 54 (4) (2002) 795--827.



         
\bibitem{St} L. N. Stoyanov, \emph{Uniqueness of topological entropy for endomorphisms on compact groups}, Boll. Un. Mat. Ital. B (7) 1 (3) (1987) 829--847.


\bibitem{vD} D. van Dantzig, \emph{Studien over topologische Algebra}, Dissertation, Amsterdam 1931.






\bibitem{Willis} G. A. Willis, \emph{The structure of totally disconnected locally compact groups}, Math. Ann. 300 (2) (1994) 341--363. 

\bibitem{Willis5} G. A. Willis, \emph{Totally disconnected groups and proofs of conjectures of Hofmann and Mukherjea}, Bull. Austral. Math. Soc. 51 (1995) 89--494.


\bibitem{Willis2} G. A. Willis, \emph{Further properties of the scale function on a totally disconnected group}, J. Algebra 237 (1) (2001) 142--164. 

\bibitem{Willis3} G. A. Willis, \emph{Tidy subgroups for commuting automorphisms of totally disconnected groups: An analogue of simultaneous triangularisation of matrices}, New York J. Math. 10 (2004) 1--35.

\bibitem{Willis4} G. A. Willis, \emph{The nub of an automorphism of a totally disconnected locally compact group}, submitted; \url{arxiv:1112.4239}.



\bibitem{Y} S. Yuzvinski, \emph{Metric properties of endomorphisms of compact groups}, Izv. Acad. Nauk SSSR, Ser. Mat. {29} (1965) 1295--1328 (in Russian). English Translation: Amer. Math. Soc. Transl. (2) 66 (1968) 63--98.


\end{thebibliography}
\end{document}